\newtheorem{theorem}{Theorem}[section]
\newtheorem{lemma}[theorem]{Lemma}
\newtheorem{proposition}[theorem]{Proposition}
\newtheorem{corollary}[theorem]{Corollary}
\newtheorem{conjecture}[theorem]{Conjecture}
\newtheorem{claim}[theorem]{Claim}
\newtheorem{definition}[theorem]{Definition}
\numberwithin{equation}{section}
\newcommand{\defn}{\emph}
\newcommand{\N}{\mathbb{N}}
\newcommand{\R}{\mathbb{R}}
\newcommand{\weights}[3]{{#1}_{#2}(#3)} 
\newcommand{\clique}[2]{K^{(#1)}_{#2}}
\newcommand{\cliques}[2]{\mathcal K^{(#1)}_{#2}}
\newcommand{\indicator}{\mathrm 1}
\newcommand{\extensions}[2]{\kappa_{#1}^{(#2)}}
\newcommand{\KK}{\mathcal{K}}
\newcommand{\HH}{\mathcal{H}}
\renewcommand{\AA}{\mathcal A}
\renewcommand{\d}{\delta}
\renewcommand{\a}{\alpha}
\newcommand{\bigdelta}{c} 
\def\eps{{\varepsilon}}
\renewcommand{\epsilon}{\varepsilon}
\renewcommand{\subset}{\subseteq}
\newcommand{\COMMENT}[1]{}
\title{Fractional Clique Decompositions of Dense Graphs and Hypergraphs}
\author{Ben Barber, 
        Daniela K\"uhn, 
       Allan Lo, 
      Richard Montgomery \and
     Deryk Osthus}
\address{School of Mathematics, University of Birmingham, Birmingham, B15 2TT, UK}
\email{\{b.a.barber, d.kuhn, s.a.lo, r.h.montgomery, d.osthus\}@bham.ac.uk}
\thanks{The research leading to these results was partially supported by the  European Research Council under the European Union's Seventh Framework Programme (FP/2007--2013) / ERC Grant Agreement n. 258345 (B.~Barber,  D.~K\"uhn and R.~Montgomery) and 306349 (D.~Osthus).
The research was also partially supported by the EPSRC, grant no. EP/M009408/1 (D.~K\"uhn and D.~Osthus).
}
\begin{document}
\date{\today}
\begin{abstract}
Our main result is that every graph $G$ on $n\ge 10^4r^3$ vertices with minimum degree $\delta(G) \ge (1 - 1 / 10^4 r^{3/2} ) n$ has a fractional 
$K_r$-decomposition. 
Combining this result with recent work of Barber, K\"uhn, Lo and Osthus leads 
to the best known minimum degree thresholds for exact (non-fractional) $F$-decompositions for a wide class of graphs~$F$ (including large cliques).
For general $k$-uniform hypergraphs, we give a short argument which shows 
that there exists a constant $c_k>0$ such that every $k$-uniform hypergraph $G$ on $n$ vertices with minimum codegree at least 
$(1- c_k /r^{2k-1})  n $ has a fractional $K^{(k)}_r$-decomposition, where $K^{(k)}_r$ is the complete $k$-uniform hypergraph on $r$ vertices.
(Related fractional decomposition results for triangles have been obtained by Dross and for hypergraph cliques by Dukes as well as Yuster.)
All the above new results involve purely combinatorial arguments. 
In particular, this yields a combinatorial proof of Wilson's theorem that 
every large $F$-divisible complete graph has an $F$-decomposition.

\end{abstract}

\maketitle 

\section{Introduction and results}
\subsection{(Fractional) decompositions of graphs}
We say that a $k$-uniform hypergraph $G$ has an \emph{$F$-decomposition} if its edge set $E(G)$ can be partitioned into copies of $F$.
A natural relaxation is that of a fractional decomposition. To define this, let $\mathcal F(G)$ be the set of copies of~$F$ in $G$.
A \defn{fractional $F$-decomposition} is a function $\omega : \mathcal F(G) \to [0,1]$ such that, for each $e \in E(G)$,
\begin{equation} \label{packing}
\sum_{F \in \mathcal F(G) \colon e \in E(F)} \omega(F) = 1.
\end{equation}
Note that every $F$-decomposition is a fractional $F$-decomposition where $\omega(F) \in \{0, 1\}$.
As a partial converse, Haxell and R\"odl~\cite{haxellrodl} used Szemer\'edi's regularity lemma to show that the existence of a fractional $F$-decomposition of a graph $G$ implies the existence of an approximate $F$-decomposition of $G$,
i.e.~a set of edge-disjoint copies of $F$ in $G$ which cover almost all edges of $G$
(their main result is more general than this). R\"odl, Schacht, Siggers and Tokushige~\cite{RSST} later generalised this result to $k$-uniform hypergraphs.

The study of $F$-decompositions of cliques is central to design theory and has a long and rich history. 
In 1847, Kirkman~\cite{Kirkman} showed that $K_n$ has a $K_3$-decomposition if and only if $n \equiv 1,3 \mod 6$. 
More generally, we say that a graph $G$ is \defn{$F$-divisible} if $e(F)$ divides $e(G)$ 
and the greatest common divisor of the degrees of $F$ divides the degree of every vertex of $G$.
If $G$ has an $F$-decomposition then it is certainly $F$-divisible.
Wilson~\cite{wilson1,wilson2,wilson3,Wilson} proved that if $G$ is a large complete graph, then this necessary condition is also sufficient.

For a given graph $F$, it is probably not possible to find a satisfactory characterization of all graphs $G$ which have an $F$-decomposition.
This is supported by the fact that Dor and Tarsi~\cite{npcomplete} proved that determining whether a graph $G$ has an $F$-decomposition is NP-complete if $F$ has a connected component with at least $3$ edges.
However, it is natural to ask whether one can extend Kirkman's result and Wilson's theorem to all dense graphs.
In particular, Nash-Williams made the following conjecture on triangle decompositions.
\begin{conjecture}[Nash-Williams~\cite{NashWilliams}] \label{conjecture}
There exists $N \in \mathbb{N}$ so that for all $n \ge N$, if $G$ is a $K_3$-divisible graph on $n$ vertices and $\delta(G) \ge 3n/4$, then $G$ has a $K_3$-decomposition. 
\end{conjecture}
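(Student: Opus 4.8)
The plan is to reduce the conjecture to a sharp \emph{fractional} statement and then convert it into an exact decomposition by absorption. The decisive first step would be to prove the fractional analogue with the conjectured threshold: that there is an $n_0$ such that every graph $G$ on $n \ge n_0$ vertices with $\delta(G) \ge 3n/4$ has a fractional $K_3$-decomposition. The main result quoted above already yields such a conclusion under the far stronger hypothesis $\delta(G) \ge (1 - 1/(10^4\cdot 3^{3/2}))n$, and Dross has improved the triangle threshold to roughly $0.9n$; the task is to push the minimum degree requirement all the way down to $3/4$, which the bipartite-type extremal examples (parts of size $n/4$ and $3n/4$) show is best possible. I would attack this via LP duality: a fractional $K_3$-decomposition of $G$ fails to exist only if there is a nonzero weighting $z : E(G) \to \R$ with $\sum_{e \in E(T)} z(e) \le 0$ for every triangle $T$ but $\sum_{e \in E(G)} z(e) > 0$, so it suffices to rule out such a $z$. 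Equivalently, one constructs the triangle weights directly, starting from the normalised uniform assignment and iteratively correcting the imbalance at each edge by rerouting weight along triangles through a common vertex or along copies of $K_4$; the minimum degree hypothesis is what guarantees enough triangles through every edge to carry out these local corrections without generating new large imbalances elsewhere.

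Granting the sharp fractional threshold, the second step is the by-now-standard absorption argument. Before anything else I would reserve a sparse random subgraph $H \subseteq G$ with $\delta(H) \ge (3/4 - o(1))n$ and $\delta(G - H) \ge (3/4 - o(1))n$, engineered to have an \emph{absorbing property}: for every sufficiently sparse $K_3$-divisible graph $R$ on $V(G)$ that could arise as a set of leftover edges, $H \cup R$ has a $K_3$-decomposition. One then applies the fractional result to $G - H$ and feeds the resulting fractional decomposition into the approximate decomposition theorem of Haxell and R\"odl~\cite{haxellrodl} to get edge-disjoint triangles covering all of $E(G - H)$ except a sparse remainder $R$; since $G$, and hence also $H$, is $K_3$-divisible, so is $R$, and the absorber completes the decomposition. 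To make the leftover genuinely sparse one iterates this (iterative absorption), cleaning up a vanishing fraction of the remaining edges each round while preserving $K_3$-divisibility throughout.

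The main obstacle is unambiguously the first step. The absorption machinery is well developed, and — as the present paper's combination with the work of Barber, K\"uhn, Lo and Osthus already demonstrates for large cliques — it can be driven by essentially any nontrivial fractional threshold; adapting it to the $3/4$ regime should require work but no fundamentally new idea. By contrast, every known route to the fractional threshold — local switching along short paths, averaging over $K_4$'s, or direct dual estimates — appears to stall well above $3/4$, precisely because the extremal configurations (complete bipartite graphs with parts of size $n/4$ and $3n/4$, together with related design-based constructions) leave essentially no slack, so the weight corrections must be orchestrated globally rather than edge by edge. Closing this gap to the sharp constant $3/4$ is the crux of the conjecture.
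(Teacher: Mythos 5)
This statement is a conjecture of Nash-Williams that remains open; the paper records it, observes that Theorem~\ref{exact-decompositions}(iii) would yield its \emph{asymptotic} form if one could show $\delta^*_{K_3}\le 3/4$, but does not prove it. Your submission is therefore necessarily a research program rather than a proof, and you are candid that the crux---establishing the fractional threshold $\delta^*_{K_3}\le 3/4$---is open and that all known methods stall well above $3/4$. That assessment is accurate and matches the paper's own framing.

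There is, however, a concrete error in your absorption step beyond ``it is hard''. You propose reserving a random subgraph $H$ with both $\delta(H)\ge(3/4-o(1))n$ and $\delta(G-H)\ge(3/4-o(1))n$. Since $d_H(v)+d_{G-H}(v)=d_G(v)\le n-1$ for every $v$, these two conditions together would force $\delta(G)\ge(3/2-o(1))n>n$, which is impossible. The standard repair---reserve a genuinely sparse absorber $H$ with $\delta(H)\approx\epsilon n$---leaves $\delta(G-H)\ge(3/4-\epsilon)n$, which is strictly \emph{below} the fractional threshold: the complete bipartite graph with parts of sizes $n/4$ and $3n/4$ shows $\delta^*_{K_3}\ge 3/4$ is tight, so the fractional result cannot then be invoked on $G-H$. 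This is precisely why the decomposition theorems driving this strategy, including Theorem~\ref{exact-decompositions}(iii), require the slack $(\delta^*_{C_\ell}+\epsilon)n$, and why the combination of a sharp fractional bound with absorption would yield only the asymptotic version of the conjecture (minimum degree $(3/4+\epsilon)n$). Proving the exact $3n/4$ bound would additionally demand a stability analysis handling graphs close to the extremal configurations directly; your proposal does not address this, and it is a substantive further gap on top of the open fractional threshold.
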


There has been considerable recent progress towards this conjecture.
The first result towards the conjecture was obtained by Gustavsson~\cite{gustavsson} who showed that, for every fixed graph $F$, there exists $\epsilon = \epsilon(F) >0$ and $n_0= n_0(F)$ such that every $F$-divisible graph $G$ on $n \ge n_0$ vertices with minimum degree $\delta(G) \ge (1- \epsilon) n $ has an $F$-decomposition.
The bound on $\eps (F)$ claimed by Gustavsson is around $10^{-37}|F|^{-94}$. 

Recently, Barber, K\"uhn, Lo and Osthus~\cite{BKLO} significantly improved the bound on $\eps(F)$
by establishing a connection to fractional decompositions.
For a graph $F$ and $n \in \mathbb N$, let $\delta^*_F(n)$ be the infimum over all $\bigdelta$ such that every graph $G$ on $n$ vertices with $\delta(G) \geq \bigdelta n$ has a fractional $F$-decomposition.
We call $\delta^*_F := \limsup_{n \to \infty} \delta^*_F(n)$ the \defn{fractional $F$-decomposition threshold}.
The main results in~\cite{BKLO} imply the following.
\begin{theorem}[Barber, K\"uhn, Lo and Osthus \cite{BKLO}] \label{exact-decompositions}
Let $F$ be a graph, let $\epsilon > 0$ and let $n$ be sufficiently large.
Let $G$ be an $F$-divisible graph on $n$ vertices and suppose that at least one of the following holds.
\begin{itemize}
\item[{\rm  (i)}] $\delta(G) \geq (\bigdelta + \epsilon)n$, where $\bigdelta := \max\{\delta^*_{K_{\chi(F)}}, 1-1/6e(F)\}$ and $\chi(F)$ is the chromatic number of $F$.
\item[{\rm (ii)}] $F$ is $d$-regular and $\delta(G) \geq (\bigdelta + \epsilon)n$, where $\bigdelta := \max\{\delta^*_{K_{\chi(F)}}, 1-1/3d\}$.
\item[{\rm (iii)}] $F=C_\ell$, where $\ell\ge 3$ is odd, and $\delta(G) \geq  (\delta^*_{C_\ell} + \eps)  n$.
\end{itemize}
Then $G$ has an $F$-decomposition.
\end{theorem}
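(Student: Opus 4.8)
The plan is to prove this via the absorbing method, in the form of an iterated near-perfect $F$-packing. Write $\chi:=\chi(F)$ and fix constants $\gamma\ll\eps$. First I would reserve inside $G$ a sparse \emph{absorbing structure} $A$ with $\Delta(A)\le\gamma n$ such that $A$ has an $F$-decomposition and, moreover, for every $F$-divisible graph $L$ on $V(G)$ with $E(L)\cap E(A)=\emptyset$ and $\Delta(L)\le\gamma n$, the graph $A\cup L$ also has an $F$-decomposition. Granting such an $A$, the argument finishes as follows: removing $A$ changes every degree by at most $\gamma n$, so $G':=G-E(A)$ still has $\delta(G')\ge(\bigdelta+\eps/2)n$; one produces a collection of edge-disjoint copies of $F$ in $G'$ covering all but a graph $H$ with $\Delta(H)\le\gamma n$ (see below); then, since $G$ and hence $A\cup H$ is $F$-divisible, one deletes a bounded number of further copies of $F$ from the packing to make the leftover $F$-divisible, and absorbs the result using $A$.

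For the near-perfect packing step I would combine two facts. The first is a reduction at the fractional level: under hypothesis~(i) one shows $\delta^*_F\le\delta^*_{K_\chi}$ by a colour-class (blow-up) embedding argument -- a fractional $K_\chi$-decomposition of a dense graph can be refined to a fractional $F$-decomposition, since $F$ embeds into a blow-up of $K_\chi$ -- so $G'$ has a fractional $F$-decomposition; cases~(ii) and~(iii) are the same with $\delta^*_F$, respectively $\delta^*_{C_\ell}$, inserted directly. The second is the Haxell--R\"odl theorem quoted above, which turns this fractional $F$-decomposition into an approximate one covering all but $o(n^2)$ edges; to upgrade ``all but $o(n^2)$ edges'' to ``leftover of maximum degree at most $\gamma n$'' one iterates the packing step, each time removing a near-perfect $F$-packing and passing to the still relatively dense leftover, until the uncovered graph is sufficiently spread out.

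The heart of the proof -- and the step I expect to be the main obstacle -- is the construction of $A$ and the verification that it absorbs every admissible $L$. The building block is an \emph{edge-absorber}: for a pair $uv$, a bounded-size subgraph $D_{uv}\subseteq G$ with $uv\notin E(D_{uv})$ such that both $D_{uv}$ and $D_{uv}+uv$ have $F$-decompositions. Such gadgets can be found inside $G$ precisely because $\delta(G)>(1-1/(6e(F)))n$; for $d$-regular $F$ the weaker bound $\delta(G)>(1-1/(3d))n$ suffices, and for odd cycles a special gadget works whenever $\delta(G)>(\delta^*_{C_\ell}+\eps)n$ -- this is exactly where the three density bounds in the statement enter. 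Since $L$ may have up to $\gamma n^2$ edges on only $n$ vertices, the gadgets cannot be chosen vertex-disjointly in advance, so instead one builds a robust ``absorbing template'' and, for a given $L$, uses the sparseness of $L$ together with a Hall/matching-type argument to select a pairwise edge-disjoint edge-absorber inside $A$ for each edge of $L$. Arranging all of this simultaneously -- $A$ sparse enough not to spoil the minimum degree, robust enough to absorb any admissible $L$, and with an $F$-decomposition of its own -- together with the divisibility-correction lemma, is the technically demanding part; the fractional reduction and the use of Haxell--R\"odl are comparatively routine.
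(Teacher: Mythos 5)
This theorem is not proved in the present paper: it is quoted from the earlier work of Barber, K\"uhn, Lo and Osthus~\cite{BKLO}, so there is no in-paper proof to compare against. Judged on its own terms, your sketch has the right ingredients (fractional decomposition $\to$ Haxell--R\"odl approximate decomposition $\to$ absorb the sparse leftover), but the absorption mechanism you describe differs in a material way from what~\cite{BKLO} actually does, and as stated it has a gap.

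You ask for a single absorber $A$ with $\Delta(A)\le\gamma n$ such that $A\cup L$ is $F$-decomposable for \emph{every} $F$-divisible $L$ on $V(G)$ with $\Delta(L)\le\gamma n$. Such an $L$ can have $\Theta(n^2)$ edges, and $A$ itself only has $O(\gamma n^2)$ edges to work with, so the edge-absorber gadgets for distinct edges of $L$ would have to be chosen essentially pairwise edge-disjoint inside $A$ \emph{after} seeing $L$ --- but there are exponentially many candidate $L$'s and no obvious robustness that a fixed sparse $A$ can supply simultaneously for all of them. ``Sparseness of $L$ together with a Hall/matching-type argument'' does not resolve this: sparse in degree is not sparse in edge count. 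This is exactly the difficulty that~\cite{BKLO} avoid by using \emph{iterative absorption via vortices}: a nested sequence $V(G)=U_0\supseteq U_1\supseteq\cdots\supseteq U_\ell$ with $|U_\ell|$ bounded, at each stage covering all edges outside $U_{i+1}$ and pushing the leftover \emph{into} the smaller set $U_{i+1}$ (not merely making it sparse). After $\ell$ rounds the uncovered graph lives on $O(1)$ vertices, and one needs an absorber only for the finitely many $F$-divisible graphs on a bounded vertex set --- a far weaker requirement than the universal sparse-graph absorber you posit. Your ``iterate until the uncovered graph is spread out'' step reduces the maximum degree of the leftover but not its vertex support, so it does not bridge this gap. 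Separately, the reduction $\delta^*_F\le\delta^*_{K_{\chi(F)}}$ is correct and is indeed used, but it is not a one-line blow-up observation; in~\cite{BKLO} it is handled via a careful weighting of copies of $F$ built from weighted $\chi$-cliques, and the extra terms $1-1/6e(F)$ (resp.\ $1-1/3d$) in the threshold come from the absorber construction itself, not from the fractional step.
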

Furthermore, asymptotically optimal results for even cycles have been obtained in~\cite{BKLO}
and for all bipartite graphs with a leaf by Yuster~\cite{bipyuster}.
Note that by Theorem~\ref{exact-decompositions}(iii)
it suffices to show that $\delta^*_{K_3} \le 3/4$ in order to prove Conjecture~\ref{conjecture} asymptotically.
Determining $\delta^*_{K_r}$ is therefore an important problem, as well as being interesting in its own right. 
The best current result towards the triangle case is due to Dross~\cite{dross}, who gave a very short and elegant argument showing that $\delta^*_{K_3} \leq 0.9$. This improves previous bounds of Yuster~\cite{yuster2005asymptotically}
Dukes~\cite{dukes,dukes2} and Garaschuk~\cite{garaschuk2014linear}.
For $r\ge 4$, Yuster~\cite{yuster2005asymptotically} proved that $\delta^*_{K_r} \leq 1-1/9r^{10}$; 
this was subsequently improved by Dukes~\cite{dukes,dukes2} who showed that $\delta^*_{K_r}\leq 1-2/9r^2(r-1)^2$.
On the other hand, a construction showing $\delta^*_{K_r}\geq 1-1/(r+1)$ is described in \cite{yuster2005asymptotically}.
Our main result gets substantially closer to this lower bound for large $r$.

\begin{theorem} \label{fracdecomp}
The following holds for any integers $r\geq 3$ and $n\geq 10^4r^3$. 
If~$G$ is a graph on~$n$ vertices and $\d(G)\geq (1-1/10^4r^{3/2})n$, then~$G$ has a fractional $K_r$-decomposition.
\end{theorem}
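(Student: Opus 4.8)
The plan is to construct the required $\omega$ by successive approximation: start from a weighting that is correct to leading order, and iteratively correct its error. A preliminary ingredient is a collection of \emph{extension estimates}. Writing $\kappa_j(S)$ for the number of copies of $K_r$ in $G$ containing a fixed clique $S$ on $j\le r$ vertices, an induction on $r-j$ shows that, when $\delta(G)\ge(1-\gamma)n$ and $n$ is large in terms of $r$, one has $\kappa_j(S)=\bigl(1\pm\eta_j(S)\bigr)\binom{n-j}{r-j}$ with an error $\eta_j(S)$ controlled in terms of $\gamma$, $r$, and the local structure around $S$ — the non-neighbourhoods of the vertices of $S$, and the non-edges of $G$ lying inside their common neighbourhood. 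One needs these with enough uniformity to track the dependence on $r$ and $\gamma$; in particular $\kappa_2(e)\le\binom{n-2}{r-2}$ always, while $\kappa_2(e)\ge e^{-O(\gamma r^2)}\binom{n-2}{r-2}>0$, so every edge lies in a positive — but, for large $r$, possibly much smaller than $\binom{n-2}{r-2}$ — number of copies of $K_r$.

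The engine of the proof will be a \emph{correction lemma}: for every nonnegative $g\colon E(G)\to\R$ there is a nonnegative weighting $\omega_g$ of $\mathcal F(G)$ with
\[
0\ \le\ g(e)-\sum_{K\in\mathcal F(G)\colon e\in E(K)}\omega_g(K)\ \le\ \tfrac12\|g\|_\infty\qquad\text{for every }e\in E(G),
\]
and with $\omega_g(K)=O_r\!\bigl(\|g\|_\infty/\binom{n-2}{r-2}\bigr)$ for every $K$. Here $\omega_g$ is assembled from a fixed family of ``gadget'' weightings — a uniform term, terms indexed by vertices $v$ (to absorb the part of $g$, and of the deficiency $\binom{n-2}{r-2}-\kappa_2(e)$, that depends additively on the degrees of the endpoints of $e$), and terms adapted to the non-edges of $G$ inside common neighbourhoods (which account for the bulk of the deficiency when $r$ is large and are genuinely non-additive) — whose coefficients are read off from $g$ via the extension estimates. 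Granting the lemma, one iterates: let $\omega^{(0)}$ be the uniform weighting scaled so that each $e$ receives weight $\kappa_2(e)/\binom{n-2}{r-2}=1-g_0(e)$ with $g_0\ge0$ and $\|g_0\|_\infty<1$; set $\omega_{i+1}:=\omega_{g_i}$ and let $g_{i+1}$ be the (still nonnegative) shortfall of $\omega^{(0)}+\omega_1+\dots+\omega_{i+1}$ from covering every edge exactly once, so that $\|g_i\|_\infty\le2^{-i}$. Then $\omega:=\omega^{(0)}+\sum_{i\ge1}\omega_i$ converges, covers every edge exactly once by a telescoping computation, is nonnegative since every summand is, and satisfies $\omega(K)\le1$ because every summand is $O_r\!\bigl(1/\binom{n-2}{r-2}\bigr)$ while the hypothesis $n\ge10^4r^3$ makes $\binom{n-2}{r-2}$ vastly larger than the implicit function of $r$. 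This $\omega$ is the desired fractional $K_r$-decomposition.

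The entire difficulty is the correction lemma, and specifically obtaining the contraction factor $\tfrac12$ (any constant below $1$ suffices) under the hypothesis $\delta(G)\ge(1-1/10^4r^{3/2})n$ rather than under a stronger one. Two points need care. First, for large $r$ the deficiency of an edge can be a large fraction of $\binom{n-2}{r-2}$ — if $G$ is complete $t$-partite with $t=10^4r^{3/2}$ equal parts then an edge lies in only an $e^{-\Theta(\sqrt r)}$-fraction of the ``expected'' copies — so one must avoid concentrating correction weight on the few copies through such an edge, and the non-edge gadgets must respond to the saturation caused by many non-edges in a common neighbourhood, so as not to over-correct. Second, and this is where the exponent $3/2$ is won, the error terms must be estimated without waste: a term-by-term union bound over the non-edges in a common neighbourhood loses a factor of order $r^2$ and would only yield a threshold of the form $1-c/r^2$, whereas comparing the true and idealised (complete-graph) coverages of the gadgets in $\ell^2$ and applying a Cauchy--Schwarz / second-moment estimate recovers a factor $\sqrt r$ and yields $1-c/r^{3/2}$. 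I expect essentially all of the technical work, and the precise values of the constants $10^4$ and $3/2$, to sit in that estimate.
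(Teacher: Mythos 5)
Your correction lemma is false as stated, which is the crux of the proposal. Take $g = \mathbf{1}_{e_0}$ for a single edge $e_0$, so $\|g\|_\infty = 1$. The constraints $\omega_g \ge 0$ and $\sum_{K : f \in E(K)} \omega_g(K) \le g(f)$ for every edge $f$ force $\omega_g(K) = 0$ for every $K$ containing any edge $f \neq e_0$; but as $r \ge 3$, every copy of $K_r$ through $e_0$ contains another edge, so $\omega_g \equiv 0$ and the shortfall over $e_0$ is $1 > \tfrac12\|g\|_\infty$. The obstruction is structural: a nonnegative weighting that is forbidden from over-covering cannot reduce a localised shortfall without also covering neighbouring edges. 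You might argue that the actual $g_i$ in your iteration are never this concentrated, and indeed $g_0 = 1 - \kappa_2(\cdot)/\binom{n-2}{r-2}$ is quite flat; but $g_1, g_2, \dots$ are defined by subtracting the coverages of the earlier $\omega_i$, and you give no argument that any regularity of $g_0$ survives the iteration. Quantifying and propagating exactly the regularity you would need is where all the difficulty lies, and the proposal does not engage with it.

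The paper's proof avoids both difficulties by neither iterating nor insisting that the correction be nonnegative. It constructs a single \emph{signed} correction out of two families of gadgets: edge-gadgets $\psi_e$ (Lemma 5.2), which change the weight over exactly one edge by adding and subtracting weight on $r$-cliques, and vertex-gadgets $\xi_x$ (Lemma 6.2), which change the weight over all edges at a vertex simultaneously and more efficiently. The decomposition is then $\omega(K) = 1/\kappa - (\text{gadget corrections})$, and what is shown is that this \emph{total} stays nonnegative, by bounding the magnitude of the correction on each $K$; smoothness (Definition 5.4) is only required of the single function being corrected, not preserved across iterations. Your diagnosis that a naive term-by-term bound yields only $1-c/r^2$ is on target (that is Theorem 6.1), but the improvement to $1-c/r^{3/2}$ is not obtained via an $\ell^2$/Cauchy--Schwarz comparison. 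It comes instead from a preprocessing step that greedily removes $r$-cliques until the high-degree set $X$ is $K_r$-free, so that $|X| \le \delta(r-1)n$ by Tur\'an, together with the vertex-gadgets' ability to move an $\Omega(1/r)$ (rather than $\Omega(1/r^2)$) fraction of weight, a per-vertex versus per-edge decomposition of $\kappa^{(r)}_{xy}-\kappa$ (Lemmas 7.1 and 7.2), and the restriction to $r$-cliques meeting $X$ in fewer than $r^{1/2}+2$ vertices (Proposition 4.3), where the threshold $r^{1/2}$ together with $|X| = O(\delta r n)$ is what produces the exponent $3/2$.
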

In order to clarify the presentation, we have made no attempt to optimise the constant $10^4$ appearing in Theorem~\ref{fracdecomp}.
Along the way, we also obtain a comparatively short and simple proof that $\d(G)\geq (1-1/10^5r^{2})n$ 
guarantees a fractional $K_r$-decomposition (see Theorem~\ref{r2lemma}).

Together with Theorem~\ref{exact-decompositions}, we immediately obtain the following corollary.
Note that (iii) is a special case of (ii).%
\COMMENT{the constant in (i) is rather slack, so no need for an $\eps$.
could replace d+1 by d in (ii) if use Brooks theorem and state Theorem~\ref{exact-decompositions}(iii) for odd cycles, but it doesn't seem worth it }
\COMMENT{
An infinite family of graphs with $\delta(G) = (1-1/(r+1))|G| - 1$ can be defined as follows.
For each $s \in \mathbb N$, let $m = 2s(r+1)$ and let $H_s$ be the complete $(r-1)$-partite graph with vertex classes of size $m$.
Let $G_s$ be a graph obtained from $H_s$ by adding a $(4s-1)$-regular graph inside each class of $H_s$.
Then $G$ is $d$-regular with $d = (1-1/(r+1))|G_s| - 1$ and every copy of $K_r$ in $G_s$ contains an edge inside one of the parts of $H_s$.
But less than a $1/\binom r 2$ fraction of the edges of $G_s$ are inside a class of $H_s$, so $G_s$ can have no fractional $K_r$-decomposition.
}
\begin{corollary} \label{Fcombined}
Let $F$ be a graph, let $\epsilon > 0$ and let $n$ be sufficiently large.
Let $G$ be an $F$-divisible graph on $n$ vertices such that at least one of the following holds.
\begin{itemize}
\item[{\rm (i)}] $\delta(G) \geq (1-1/10^4|F|^2)n$.
\item[{\rm (ii)}] $F$ is $d$-regular and $\delta(G) \geq (1-1/10^4(d+1)^{3/2} + \epsilon)n$. 
\item[{\rm (iii)}] $F=K_r$ and $\delta(G) \geq  (1-1/10^4r^{3/2}+\eps)n$.
\end{itemize}
Then $G$ has an $F$-decomposition.
\end{corollary}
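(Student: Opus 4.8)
The plan is to deduce each part of Corollary~\ref{Fcombined} from the corresponding part of Theorem~\ref{exact-decompositions}, using Theorem~\ref{fracdecomp} to bound the fractional clique decomposition threshold $\delta^*_{K_{\chi(F)}}$ that appears there; the only real content is to check that the minimum degree hypotheses in the corollary are at least as strong as those required by Theorem~\ref{exact-decompositions}. First I would record two elementary facts. By Theorem~\ref{fracdecomp}, $\delta^*_{K_m}\le 1-1/(10^4 m^{3/2})$ for every integer $m\ge 3$; moreover $\delta^*_{K_2}=0$ trivially, since weighting every edge by $1$ is a fractional $K_2$-decomposition, so the bound $\delta^*_{K_m}\le 1-1/(10^4 m^{3/2})$ in fact holds for all $m\ge 2$. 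I also need the colouring bounds $\chi(F)\le |F|$ in general and $\chi(F)\le d+1$ when $F$ is $d$-regular (greedy colouring), which together with $\chi(F)^{3/2}\le\min\{|F|,d+1\}^{3/2}$ give $\delta^*_{K_{\chi(F)}}\le 1-1/(10^4|F|^{3/2})$ always and $\delta^*_{K_{\chi(F)}}\le 1-1/(10^4(d+1)^{3/2})$ in the $d$-regular case.

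For part (i), write $\bigdelta=\max\{\delta^*_{K_{\chi(F)}},\,1-1/6e(F)\}$ as in Theorem~\ref{exact-decompositions}(i). Using $e(F)\le\binom{|F|}{2}\le|F|^2/2$ one gets $1-1/6e(F)\le 1-1/(3|F|^2)$, and combining this with $\delta^*_{K_{\chi(F)}}\le 1-1/(10^4|F|^{3/2})\le 1-1/(10^4|F|^2)$ shows $\bigdelta+\epsilon_F\le 1-1/(10^4|F|^2)$ for some constant $\epsilon_F=\epsilon_F(F)>0$, provided $F$ has an edge (otherwise the statement is vacuous). Hence $\delta(G)\ge(1-1/10^4|F|^2)n\ge(\bigdelta+\epsilon_F)n$ and Theorem~\ref{exact-decompositions}(i) yields an $F$-decomposition of $G$ for all sufficiently large $n$; this built-in slack is precisely why part (i) needs no explicit $\epsilon$ term.

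For part (ii), write $\bigdelta=\max\{\delta^*_{K_{\chi(F)}},\,1-1/3d\}$ as in Theorem~\ref{exact-decompositions}(ii). Since $(d+1)^{3/2}\ge d+1>d$, and hence $10^4(d+1)^{3/2}\ge 3d$, we have $1-1/3d\le 1-1/(10^4(d+1)^{3/2})$, while the $d$-regular colouring bound gives the same estimate for $\delta^*_{K_{\chi(F)}}$; thus $\bigdelta\le 1-1/(10^4(d+1)^{3/2})$. Therefore $\delta(G)\ge(1-1/10^4(d+1)^{3/2}+\epsilon)n\ge(\bigdelta+\epsilon)n$ and Theorem~\ref{exact-decompositions}(ii) applies. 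Part (iii) is then just the case $F=K_r$ of part (ii): $K_r$ is $(r-1)$-regular, so $d+1=r$ and the threshold $1-1/10^4(d+1)^{3/2}+\epsilon$ becomes $1-1/10^4 r^{3/2}+\epsilon$ (alternatively, apply Theorem~\ref{exact-decompositions}(i) directly together with $\delta^*_{K_r}\le 1-1/10^4 r^{3/2}$).

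I do not expect a genuine obstacle here: the whole proof is a short verification of inequalities and an invocation of two already-proved theorems. The one place calling for a little care is the bipartite case $\chi(F)=2$, where Theorem~\ref{fracdecomp} (which assumes $r\ge 3$) must be replaced by the trivial fact $\delta^*_{K_2}=0$, together with the elementary bookkeeping needed to confirm that the slack in part (i) is strictly positive for every $F$ with at least one edge (so that no $\epsilon$ is required there), and to check that ``sufficiently large $n$'' here is no stronger a requirement than in Theorem~\ref{exact-decompositions}, since the bounds on $\delta^*$ are asymptotic and impose no further lower bound on $n$.
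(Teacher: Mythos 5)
Your proposal is correct and matches the paper's (essentially one-line) deduction: the paper simply combines Theorem~\ref{exact-decompositions} with Theorem~\ref{fracdecomp}, noting that (iii) is a special case of (ii), and you spell out the routine inequality-checking that underlies this. One small flaw: your parenthetical alternative for (iii) — applying Theorem~\ref{exact-decompositions}(i) directly — does not work for large $r$, since there $\bigdelta = \max\{\delta^*_{K_r},\,1-1/6e(K_r)\}$ has $1-1/6e(K_r) = 1-1/(3r(r-1))$, which exceeds $1-1/(10^4 r^{3/2})$ once $r \gtrsim 10^7$; stick with the deduction from (ii), as the paper does.
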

An obvious open problem is to improve the bounds in Theorem~\ref{fracdecomp} (and thus in Corollary~\ref{Fcombined}).
Furthermore, in view e.g.~of Theorem~\ref{exact-decompositions}(iii) it would also be very interesting to obtain better bounds on
the fractional decomposition threshold for odd cycles.

\subsection{(Fractional) decompositions of hypergraphs}
Our methods also extend to $k$-uniform hypergraphs with $k \ge 3$.  
For a $k$-uniform hypergraph $G$, the \defn{minimum codegree} $\delta_{k-1}(G)$ of $G$ is the minimum over all $(k-1)$-subsets $S$ of $V(G)$ of the number of edges containing all the vertices in $S$.
For a $k$-uniform hypergraph $F$ and $n \in \mathbb N$, let $\delta^*_F(n)$ be the infimum over all $\bigdelta$ such that every $k$-uniform hypergraph $G$ on $n$ vertices with $\delta_{k-1}(G) \geq \bigdelta n$ has a fractional $F$-decomposition.
We again call $\delta^*_F := \limsup_{n \to \infty} \delta^*_F(n)$ the \defn{fractional $F$-decomposition threshold}.
For $r \ge k \ge 2$, let $\clique k r$ denote the complete $k$-uniform hypergraph on $r$ vertices.
For $r \ge k \ge 2$, Yuster~\cite{yuster2005fractional} proved that $\delta^*_{\clique k r} \leq 1 - 1/6^{kr}$.
Dukes~\cite{dukes,dukes2} improved this to $\delta^*_{\clique k r} \leq 1 - 1/(2\cdot 3^k\binom r k ^2)$.
We give a short combinatorial proof for a similar bound (which is slightly better when $r$ is large).

\begin{theorem} \label{hypergraphs}
Given $r, k\in \mathbb N$ with $r > k \ge 2$, let $\delta:=\frac{k!}{2^{k+3}k^2r^{2k-1}}$ and let $n > 1/\delta$.%
\COMMENT{Made explicit for small $r$ graph purposes.
Note that this condition is trivial for graphs, as for smaller $n$ the minimum degree condition implies that $G$ is complete.}
Then any $k$-uniform hypergraph $G$ on $n$ vertices with $\delta_{k-1}(G) \geq (1-\delta)n$ has a fractional $\clique k r$-decomposition.
\end{theorem}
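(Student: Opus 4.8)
The plan is to use linear-programming duality (Farkas' lemma / the duality theorem for linear programs): a fractional $\clique k r$-decomposition of $G$ exists if and only if there is no weighting $\psi: E(G) \to \R$ with $\sum_{e \in E(H)} \psi(e) \le 0$ for every copy $H$ of $\clique k r$ in $G$, but $\sum_{e \in E(G)} \psi(e) > 0$. So it suffices to show that any $\psi$ which is nonpositive on every copy of $\clique k r$ already has $\sum_{e} \psi(e) \le 0$. Equivalently, writing things additively, I want to show: if $\psi$ assigns a real weight to each edge so that every clique has nonpositive total weight, then the total weight of $G$ is nonpositive. The natural strategy is an averaging argument: take a uniformly random copy $H$ of $\clique k r$ in $G$ (or more precisely, a copy built up vertex by vertex using the codegree condition), and compare $\E\bigl[\sum_{e \in E(H)} \psi(e)\bigr]$, which is $\le 0$ by hypothesis, to a positive multiple of $\sum_{e \in E(G)} \psi(e)$.

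First I would set up the sampling process. Starting from an arbitrary edge $e_0 = \{v_1,\dots,v_k\}$ of $G$, I extend it greedily to a clique: having chosen $v_1,\dots,v_i$ forming a clique ($i \ge k$), I pick $v_{i+1}$ uniformly at random from the common neighbourhood, i.e.\ the set of vertices $u$ such that every $k$-subset of $\{v_1,\dots,v_i,u\}$ containing $u$ is an edge. The codegree condition $\delta_{k-1}(G) \ge (1-\delta)n$ guarantees this common neighbourhood is large: each of the $\binom{i}{k-1}$ bad $(k-1)$-sets kills at most $\delta n$ candidates, so at least $n - i - \binom{i}{k-1}\delta n \ge n - r - \binom{r}{k-1}\delta n$ vertices remain, which is positive (and in fact at least a constant fraction of $n$) by the choice of $\delta$ and $n > 1/\delta$. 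Thus a random $\clique k r$ through any fixed edge exists, and more importantly the distribution of the resulting clique is nearly uniform: for each edge $f \in E(G)$ I can compute, or bound, the probability that $f \subseteq V(H)$ and the conditional expected number of edges of $H$ lying inside a given vertex pair, exploiting symmetry of the last $r-k$ choices.

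The key step is the comparison. Condition on $H$ containing a given edge $f$; I claim the expected weight $\E\bigl[\sum_{e \in E(H)} \psi(e) \mid f \subseteq V(H)\bigr]$ is at least $\bigl(\binom r k / e(G)\bigr) \sum_{e} \psi(e)$ up to a correction that is controlled by the error terms $\binom{r}{k-1}\delta$. Concretely: each of the $\binom r k$ edges of $H$ is, conditionally, distributed close to a uniformly random edge of $G$ (this is where the near-uniformity from the previous paragraph enters), so $\E[\sum_{e\in E(H)}\psi(e)]$ is within a $(1\pm O(r^k\delta))$ factor of $\binom r k$ times the average edge weight $\frac{1}{e(G)}\sum_e \psi(e)$. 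Since the left side is $\le 0$ for every instance, if $\sum_e \psi(e) > 0$ we get a contradiction provided the multiplicative error is smaller than $1$ — and this is exactly what the bound $\delta = \frac{k!}{2^{k+3}k^2 r^{2k-1}}$ is engineered to deliver, since $\binom r k \binom{r}{k-1}\delta \le r^{2k-1}\delta/(k!(k-1)!) \cdot \text{const}$ is comfortably below a constant threshold. Then $\sum_e \psi(e) \le 0$ for all valid $\psi$, and LP duality gives the fractional decomposition.

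The main obstacle I expect is making the near-uniformity precise: showing that a single edge of the randomly-grown clique — and also a single edge of $G$ conditioned to lie in $V(H)$ — is distributed close enough to uniform on $E(G)$, with an error that can be bounded purely in terms of $\delta$, $r$ and $k$ and not in terms of irregularities of $G$. This requires carefully tracking how the greedy extension probabilities depend on the already-chosen vertices, and symmetrising over the order in which the $r-k$ new vertices are added so that the dependence on the "seed" edge $e_0$ washes out. The degree/codegree bookkeeping (counting how many extensions are lost to each bad $(k-1)$-set, and ensuring denominators stay within $(1\pm\delta')n$ of each other) is routine but is where the explicit constant $2^{k+3}k^2$ will be pinned down. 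Everything else — the LP duality reformulation and the final inequality chain — is short.
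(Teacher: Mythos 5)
Your LP duality reformulation is correct: a fractional $\clique k r$-decomposition exists if and only if every $\psi:E(G)\to\R$ with $\sum_{e\in E(F)}\psi(e)\le 0$ for all $F\in\FF(G)$ also satisfies $\sum_{e\in E(G)}\psi(e)\le 0$. The gap is in the averaging step. You argue that since each edge $f$ lies in the random clique $H$ with probability $p_f$ lying in $(1\pm O(r^k\delta))p$ for a common $p$, the expectation $\E\bigl[\sum_{e\in E(H)}\psi(e)\bigr]=\sum_f p_f\psi(f)$ is within a factor $1\pm O(r^k\delta)$ of $p\sum_f\psi(f)$. This inference fails when $\psi$ takes both signs, which the dual certificate generally does. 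Writing $p_f=p+\eps_f$ with $|\eps_f|\le O(r^k\delta)\,p$, one only gets
\[
\sum_f p_f\psi(f)\;=\;p\sum_f\psi(f)\;+\;\sum_f \eps_f\psi(f),
\]
and the error term is controlled by $\sum_f|\psi(f)|$, not by $\bigl|\sum_f\psi(f)\bigr|$. Since $\sum_f|\psi(f)|$ can be arbitrarily large compared with $\sum_f\psi(f)$, the conclusion $\sum_f\psi(f)\le 0$ does not follow; there is no normalisation of $\psi$ that repairs this. The same cancellation issue arises whether you condition on a fixed edge, a uniformly random edge, or take a uniformly random clique (in which case $p_f\propto\extensions f r$, and multiplicative closeness of the $\extensions f r$ to their mean is precisely what you prove, but is not enough).

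This is exactly the difficulty the paper's argument is built to circumvent: it works on the primal side and constructs $\omega\ge 0$ explicitly. The starting point, a uniform weight $w=1/\kappa$ on every $r$-clique, corresponds to your uniform random clique and has weight over $e$ equal to $w\,\extensions e r$, close to but not equal to $1$. The paper then defines an \emph{edge-gadget} (Proposition~\ref{gadget}): a signed weighting of the $r$-cliques inside a fixed $\clique k{r+k}$ whose net effect is to change the weight over a single edge $e$ by exactly $1$ and over every other edge by $0$. Averaging this gadget over all $(r+k)$-cliques containing $e$ spreads the correction so thinly that, after adjusting every edge, no $r$-clique's weight becomes negative; the codegree condition controls both the initial error $|\kappa-\extensions e r|$ and the denominators $\extensions e{r+k}$ in the averaging. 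Your ``symmetrising over the order in which the $r-k$ new vertices are added'' and ``pinning down the constant'' would, if pushed through, have to produce precisely such a family of explicit signed corrections; the near-uniformity observation alone cannot replace it. I would suggest abandoning the pure averaging step and instead either (a) constructing the dual-free primal weighting as the paper does, or (b) keeping the LP dual but exhibiting, for each dual constraint violated by the uniform multiplier $\kappa^{-1}\mathbf 1$, an explicit nonnegative combination of clique constraints witnessing $\sum_e\psi(e)\le 0$; option (b) again leads you to the gadget construction.
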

Note that in Theorem~\ref{hypergraphs} and at many points in the remainder of the paper, we use $\delta$ for the `missing minimum degree proportion' (rather than for the minimum degree of the (hyper-)graph itself). Moreover,
note that for graphs, Theorem~\ref{hypergraphs} gives weaker bounds than those discussed in the previous subsection.


In a recent breakthrough, Keevash~\cite{keevash} proved that every sufficiently large $K^{(k)}_n$ satisfying the necessary 
divisibility conditions has a $K^{(k)}_r$-decomposition. 
This settled a question regarding the existence of designs going back to the 19th century.
Moreover, his results also extend to hypergraphs with minimum codegree at least $(1-\eps)n$, for an unspecified $\eps>0$.
Theorem~\ref{hypergraphs} may help to obtain explicit bounds on $\eps$.

\subsection{Recent developments}
Since submission of the original manuscript, there have been a number of 
further developments:
Firstly, Glock, K\"uhn, Montgomery, Lo and Osthus~\cite{GKLMO} obtained further results 
on the decomposition threshold of graphs which strengthen Theorem~\ref{exact-decompositions}. 
These imply e.g. that for cliques, the decomposition threshold equals its fractional
version, i.e.~a minimum degree of $(\delta^*_{K_r}+o(1))n$ guarantees a 
$K_r$-decomposition. Also,~\cite{GKLMO} determines the decomposition threshold
for bipartite graphs.

Secondly, several results in the partite setting have been obtained.
Bowditch and Dukes~\cite{Du} and Montgomery~\cite{Montgomery} gave bounds on the fractional partite decomposition 
threshold of triangles and cliques respectively, and Barber, K\"uhn, Lo, Osthus and Taylor~\cite{BKLOT} 
showed that the partite decomposition threshold for cliques equals its
fractional version.
These results can be combined to show the existence of completions of 
suitable partial 
(mutually orthogonal) latin squares.

Thirdly, Glock, K\"uhn, Lo and Osthus~\cite{GKLO} obtained a new proof of the existence of
designs which generalizes the results in~\cite{keevash} beyond the quasirandom 
setting. In particular, the result implies that a minimum codegree bound
close to that in Theorem~\ref{hypergraphs} already guarantees an actual decomposition (under appropriate divisibility conditions).
The proof in~\cite{GKLO} incorporates the ideas used in the proof of 
Theorem~\ref{hypergraphs}.

\subsection{Proof idea and organization of the paper.}
The proof by Dukes~\cite{dukes,dukes2} that $\delta^*_{K_r}\leq 1-\Omega(1/r^4)$ is based on tools from linear algebra. 
To prove Theorem~\ref{fracdecomp} we build on the combinatorial approach of Dross~\cite{dross}.
The latter argument begins with a uniform weighting of the triangles in a graph $G$ with high minimum degree
(this idea is actually already implicit in~\cite{dukes}).
This uniform weighting can be shown to be `close' to a fractional triangle decomposition of $G$. 
Then the idea is to use the  max-flow min-cut theorem to make the necessary adjustments to this weighting 
to obtain a fractional triangle decomposition.
Our methods begin with a similar initial weighting, but avoid using the  max-flow min-cut theorem.
Theorem~\ref{hypergraphs} is obtained by generalising (simplified versions of) these methods to hypergraphs;
we give a more detailed sketch in Section~\ref{sketch}. 
We then prove Theorem~\ref{hypergraphs} in Section~\ref{sec:hyper}, before proving Theorem~\ref{fracdecomp} in Sections~\ref{sec:num-cliques}, \ref{secweight} and~\ref{sec:vertexgadget}--\ref{secproof}.
In Section~\ref{escape-route} we combine the results of Sections~\ref{sec:num-cliques} and~\ref{secweight} to give a short proof of Theorem~\ref{r2lemma}%
\COMMENT{B: Renamed Lemma to Theorem, as we can't call a final result a lemma.}, a weaker form of Theorem~\ref{fracdecomp} with $r^2$ in place of $r^{3/2}$.

Our argument here and that in~\cite{BKLO} is purely combinatorial.
So the proofs of Theorem~\ref{fracdecomp} and Theorem~\ref{exact-decompositions} together  yield a combinatorial proof of 
Wilson's theorem~\cite{wilson1,wilson2,wilson3,Wilson} that every large $F$-divisible clique has an $F$-decomposition.
(The original proof as well as that of the clique version for hypergraphs by Keevash~\cite{keevash} made use of algebraic tools.)

\subsection{Notation}
Given $k \ge 2$, a \emph{$k$-uniform hypergraph} is an ordered pair $G = (V(G),E(G))$, where $V(G)$ is a finite set (the vertex set) and $E(G)$ is a set of $k$-element subsets of~$V(G)$ (the edge set).
We sometimes write $V(e)$ to emphasise that we are thinking of an edge $e$ as a set of vertices.
Given a $k$-uniform hypergraph $G$ and $S \subseteq V(G)$ with $|S| \leq k-1$, we let $N(S):=\{T \subseteq V(G) \setminus S : T \cup S \in E(G)\}$ and write $d(S):=|N(S)|$.
We let $N^c(S) := \{T \subseteq V(G) : |T|=k-|S|, T \cup S \notin E(G)\}$.
For $1 \leq j \leq k-1$, we write $\delta_j(G) := \min \{d(S) : S \subseteq V(G), |S| = j\}$ for the \defn{minimum $j$-degree} ($\delta_{k-1}(G)$ is also known as the \defn{minimum codegree}).

Given $r \geq k\ge 2$, we write $\cliques k r (G)$ for the set of copies of $\clique k r$ in~$G$.
If~$G$ is clear from the context, we just write $\cliques k r$; if $k=2$, then we just write $\KK_r$.
We write $k_r = k_r(G) := |\cliques k r (G)|$ for the number of $r$-cliques in $G$.
(For $r < 0$, we let $k_r:=0$.)%
\COMMENT{We do use that $k_r = \binom n r$ for $0 \leq r \leq k$.}
For each $S \subseteq V(G)$ and $r\in \N$, let $\kappa_S^{(r)} := |\{ K \in \cliques k r : S \subseteq V(K) \}|$.
For an edge $e$, we often write $\kappa_e^{(r)}$ for $\kappa_{V(e)}^{(r)}$.
For $r,k\in\mathbb{N}$, we write $(r)_k:=r(r-1)\cdots (r-k+1)$ for the $k$th falling factorial of~$r$.

For a graph $G$ and $x \in V(G)$, we write $N(x) := \{y \in V(G) : xy \in E(G)\}$ for the neighbourhood of $x$ and $d(x) := |N(x)|$ for the degree of $x$.
We let $N^c(x) = \{y \in V(G) : xy \notin E(G)\}$ (note that this includes $x$ itself).
For $S \subseteq V(G)$, we write $G[S]$ for the subgraph of $G$ induced by~$S$, and abbreviate $e(G[S])=|E(G[S])|$ and $\KK_r(G[S])$ by $e(S)$ and $\KK_r[S]$, respectively. 
Given any event $A$, we let
\[
\mathbf{1}_A :=\begin{cases}
1 & \text{ if }A \text{ occurs,} \\
0 & \text{ otherwise.}
\end{cases}
\]

By a \defn{weighting} of the $r$-cliques in $G$ we mean a function $\omega : \cliques k r \to \mathbb R$.
The \defn{weight} of a clique $K$ is $\omega(K)$.
For $e \in E(G)$, the \defn{weight over $e$} is $\sum_{K \in \KK_r(G) : e \in E(K)} \omega(K)$.%
\COMMENT{This is mostly for the benefit of the sketch: the word weight does not seem to appear much in the body of the proof, as we usually work with functions.}

\section{Sketch of proof} \label{sketch}

Here we present a sketch proof of Theorem~\ref{hypergraphs}, which will also form the backbone of the proof of Theorem~\ref{fracdecomp}.
For simplicity, we describe the argument for graphs (which generalises straightforwardly to the hypergraph case).

As $\delta(G)$ is large, for each $e \in E(G)$, $G$ has many $r$-cliques containing $e$.
In fact, all edges $e$ are contained in approximately the same number of $r$-cliques.
More precisely, there is some small $\alpha >0$ such that $(1- \alpha)k_{r-2}  \le \kappa_{e}^{(r)} \le k_{r-2}$ for any $e \in E(G)$
(see Proposition~\ref{global-and-local-cliques}).
An appropriately scaled uniform weighting of the $r$-cliques of $G$ is therefore already close to a fractional decomposition of $G$, in the sense that the total weight over each edge is close to $1$.
We seek to perturb the weight of each $r$-clique so that the total weight over each edge becomes exactly $1$.

For each $e \in E(G)$, we consider an `edge-gadget' $\psi_e$ that permits us to alter the weight over $e$ without altering the weight over any other edge.
This edge-gadget adds weight to some $r$-cliques and removes weight from other $r$-cliques so that the change in weight cancels out over every edge except for~$e$.
Formally speaking, an \emph{edge-gadget} for the edge $e$ is a weighting $\psi_e : \KK_r \to \R$ such that, for each $f \in E(G)$,
\begin{align*}
\sum_{ K\in \KK_r \colon  f \in E(K)} \psi_e(K)=  \mathbf{1}_{\{e=f\}}. 
\end{align*}
For $c \in \R$, the function $c \cdot \psi_e$ corresponds to adding weight $c$ over~$e$. 
Our aim is to use these edge-gadgets $\psi_e$ (for $e \in E(G)$) to correct the weights over the edges without reducing the weight of any one clique so far that it becomes negative.

We construct a basic edge-gadget~$\psi_e$ as follows.
Let $J$ be an $(r+2)$-clique of $G$ that contains $e$ (which exists since the minimum degree is large).
There are three types of edges in $J$, determined by how many vertices they share with $e$.
Accordingly, write $E_j := \{f \in E(J) : |V(f) \cap V(e)| = j\}$.
(Note that $E_2 = \{e\}$.)
Similarly, there are three types of cliques in $\KK_r(J)$, determined by how many vertices they share with $e$.
Accordingly, write $S_j := \{K \in \KK_r(J) : |V(K) \cap V(e)| = j\}$. 
We first increase the weight of every $r$-clique in $S_2$ by $1/|S_2| = 1/ \binom r 2$.
This has the effect of increasing the weight over every edge of $J$, and this increase only depends on whether the edge is in $E_2$, $E_1$ or $E_0$.
The weight over $e$ is now $1$ as desired, but the weight over the edges in $E_1$ and $E_0$ is also positive.
We now correct the weight over every edge in $E_1$ by reducing the weight of every clique in $S_1$ by the same amount.
The weight over each edge in $E_2 \cup E_1$ is now as desired, so it remains only to correct the weights of each edge in $E_0$.
But the edges in $E_0$ form a clique $K^*$, and the weight over every edge in $E_0$ is identical, so it can be made equal to zero by adjusting the weight of $K^*$.
This completes the construction of $\psi_e$.

If we use the basic edge-gadget $\psi_e$ as described above to adjust the weight over each edge~$e$, then we might have to make large adjustments to the weights of some cliques---large enough that these weights would become negative and prevent us obtaining a fractional decomposition. 
To avoid making too large an adjustment to the weight of any $r$-clique, we will therefore, for each edge~$e$, use many different edge-gadgets $\psi_e$ to correct the weight over $e$, making a small adjustment using each $\psi_e$ and spreading the adjustments over as many $r$-cliques as possible. 
To be precise, note that in the previous paragraph we have actually defined an edge-gadget, $\psi_{e}^{J}$ say, for each $(r+2)$-clique $J$ containing $e$, and there are $\kappa_e^{(r+2)}$ such cliques $J$.
So we set $\psi_e$ to be the average over of the edge-gadgets $\psi_{e}^{J}$, that is, $\psi_e : = \sum_{J \in \KK_{r+2}: e \in E(J)} \psi_{e}^{J} / \kappa_e^{(r+2)} $.

This simple argument can already be used to find fractional $K_r$-decompositions of graphs on $n$ vertices with minimum degree at least $(1-c/r^3)n$ for some absolute constant $c$.
The argument generalises straightforwardly to hypergraphs, and we use it to prove Theorem~\ref{hypergraphs} in Section~\ref{sec:hyper}.

In order to prove Theorem~\ref{fracdecomp}, we introduce two additional ideas. 
Firstly we introduce an additional preprocessing step which allows us to limit the adjustments we need to make to the weight over most of the edges.
This leaves us most concerned with the problem of correcting the weight over a small fraction of `bad' edges.
The naive averaging argument would then ask for a large adjustment to the weight of cliques that contain many bad edges. 
But the proportion of such gadgets which use many bad edges is small.
Hence we can avoid using these gadgets and thus reduce the maximum adjustment that might be required for each $r$-clique. 
This allows us to obtain fractional $K_r$-decompositions provided that $\delta(G) \geq (1-c/r^2)n$ for some absolute constant $c$.
We prove this in Sections~\ref{sec:num-cliques}--\ref{escape-route}.

Secondly, we introduce a `vertex-gadget' that allows us to increase the weight over every edge at a vertex by the same amount simultaneously (see Section~\ref{sec:vertexgadget}).
In return for this reduction in flexibility we are able to make these adjustments more efficiently, with smaller changes to the weights of cliques. 
By further analysing the pattern of changes required to the weights over the edges in Section~\ref{sec:kappa}, we use this vertex-gadget to make an initial adjustment before using edge-gadgets to make the final adjustment. 
We put together these ideas and results from Sections~\ref{sec:num-cliques}, \ref{secweight}, \ref{sec:vertexgadget} and~\ref{sec:kappa} to prove Theorem~\ref{fracdecomp} in Section~\ref{secproof}.

\section{Fractional decompositions of hypergraphs}\label{sec:hyper}

\subsection{Basic tools} \label{hyper-tools}

We first observe that if a $k$-uniform hypergraph has large minimum codegree, then for all $\ell<k$ its
minimum $\ell$-degree is also large.

\begin{proposition} \label{other-degrees}
Let $k \in \mathbb N$ with $k \geq 2$, let $0<\delta<1$ and let $G$ be a $k$-uniform hypergraph on $n$ vertices.
Suppose that $\delta_{k-1}(G) \geq (1 - \delta) n$.
Then for every $\ell \leq k-1$, $\delta_\ell(G) \geq (1 - \delta) \binom {n-\ell} {k - \ell}$.
\end{proposition}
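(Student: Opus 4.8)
The plan is to bound the $\ell$-degree $d(S)$ of an arbitrary $\ell$-set $S$ from below by averaging over the larger sets containing $S$, using the codegree hypothesis at the top level. Fix $S \subseteq V(G)$ with $|S| = \ell \le k-1$. For each $(k-1)$-set $T$ with $S \subseteq T \subseteq V(G)$, the codegree hypothesis gives $d(T) \ge (1-\delta)n$; since $d(T)$ counts vertices $v \notin T$ with $T \cup \{v\} \in E(G)$, and there are at most $n$ such $v$ in total, we may instead say that the number of $k$-sets $e$ with $T \subseteq e$ is at least $(1-\delta)n$ — or, to be safe about the off-by-one, at least $(1-\delta)n - |T| \ge (1-\delta)(n-\ell) $ after absorbing the additive loss into the $\delta$ (this is the kind of routine slack I would check carefully).

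Next I would count, in two ways, the number $M$ of pairs $(T,e)$ where $S \subseteq T$, $|T| = k-1$, $T \subseteq e$, and $e \in E(G)$. Summing over $T$ first gives $M \ge \binom{n-\ell}{k-1-\ell} \cdot (1-\delta)(n-\ell)$, roughly. Summing over $e$ first: each edge $e \in E(G)$ with $S \subseteq e$ contributes exactly $\binom{k-\ell}{k-1-\ell} = k-\ell$ pairs (choose which of the $k-\ell$ vertices of $e \setminus S$ is omitted from $T$). Hence $M = (k-\ell)\, d(S)$. Combining, $d(S) \ge \frac{1}{k-\ell}\binom{n-\ell}{k-1-\ell}(1-\delta)(n-\ell)$, and the identity $\frac{n-\ell}{k-\ell}\binom{n-\ell}{k-1-\ell} = \binom{n-\ell}{k-\ell}$ finishes the bound $d(S) \ge (1-\delta)\binom{n-\ell}{k-\ell}$.

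Since $S$ was an arbitrary $\ell$-set, this gives $\delta_\ell(G) \ge (1-\delta)\binom{n-\ell}{k-\ell}$. (A clean alternative is induction on $k-\ell$: from $\delta_{j}(G) \ge (1-\delta)\binom{n-j}{k-j}$ one derives the bound for $\delta_{j-1}$ by the same averaging over one extra vertex, which reduces the combinatorial identity to the Pascal-type relation $\sum_{v} \binom{n-j}{k-j}$-type sum; I would pick whichever is shorter to write.) The only mild obstacle is being honest about the additive $O(\ell)$ terms coming from "$v \notin T$" versus "$v$ ranges over all of $V(G)$"; either one tracks them and shows the stated clean inequality still holds, or one observes that the hypothesis $\delta_{k-1}(G) \ge (1-\delta)n$ is typically applied with room to spare. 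I expect no real difficulty beyond this bookkeeping.
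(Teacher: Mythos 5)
Your double-counting of pairs $(T,e)$ with $S\subseteq T\subseteq V(e)$ is precisely the paper's argument and yields the stated bound. Two cosmetic points: there is no off-by-one to absorb, since the hypothesis $d(T)\geq(1-\delta)n$ already counts extensions by $v\notin T$; and $\frac{n-\ell}{k-\ell}\binom{n-\ell}{k-1-\ell}=\binom{n-\ell}{k-\ell}$ is not an identity (the true one has $n-k+1$ in place of $n-\ell$), but since $n-\ell\geq n-k+1$ the inequality $\frac{n-\ell}{k-\ell}\binom{n-\ell}{k-1-\ell}\geq\binom{n-\ell}{k-\ell}$ holds, which is exactly how the paper finishes.
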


\begin{proof}
Choose $S \subseteq V(G)$ with $|S| = \ell$ such that $d(S) = \delta_\ell(G)$.
Let $\mathcal T := \{(T, e) : S \subseteq T \subseteq V(e), e \in E(G), |T| = k-1 \}$.
Then
\[
\binom {n-\ell} {k-\ell-1} \cdot \delta_{k-1}(G) \leq |\mathcal T| = d(S) \cdot (k - \ell),
\]
hence
\[
\delta_\ell(G) = d(S) \geq \frac {(1-\delta)n} {k-\ell} \binom {n-\ell} {k-\ell-1} \geq (1-\delta) \binom {n-\ell} {k-\ell}. \qedhere
\]
\end{proof}

We shall use the following bounds on the number of $r$-cliques and the number of $r$-cliques containing a fixed edge.

\begin{proposition} \label{global-and-local-cliques}
Let $n >r >k \geq 2$, let $1/n < \delta < 1$ and let $G$ be a $k$-uniform hypergraph on $n$ vertices with $\delta_{k-1}(G) \geq (1-\delta)n$.
Then
\begin{equation} \label{number-of-cliques} 
\Big(1 - \binom r k \delta\Big) \binom n r \leq k_r \leq \binom n r \leq \frac {n^r} {r!}
\end{equation}
and, for any $e \in E(G)$,
\begin{align} \label{cliques-over-edge}
k_{r-k} - \frac{2 k\delta n^{r-k}}{(r-k)!}\binom{r}{k-1} \leq \extensions e r \leq k_{r-k}.
\end{align}
\end{proposition}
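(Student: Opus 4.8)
The plan is to count cliques by a greedy/extension argument, repeatedly using the minimum codegree bound supplied by the hypothesis together with its consequence for lower-degree sets (Proposition \ref{other-degrees}). For the upper bounds, the inequalities $k_r \le \binom nr \le n^r/r!$ and $\extensions er \le k_{r-k}$ are trivial: every $r$-clique is in particular an $r$-set, and every $r$-clique containing a fixed edge $e$ restricts to an $(r-k)$-clique on the remaining vertices (indeed one in the common neighbourhood $N(S)$ as $S$ ranges over the $(k-1)$-subsets of $V(e)$), giving an injection into $\cliques k{r-k}$.

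For the lower bound in \eqref{number-of-cliques}, I would estimate the number of \emph{ordered} $r$-tuples spanning a clique. Build the clique one vertex at a time; having chosen $j$ vertices forming a clique, I must choose the $(j+1)$st vertex so that it forms an edge with every $(k-1)$-subset of the chosen $j$ vertices that I care about — more precisely, so that together with any $k-1$ of the already-chosen vertices it spans an edge. The cleanest bookkeeping: a set $A$ of $j \ge k-1$ vertices fails to extend to a clique on $A\cup\{v\}$ only if some $(k-1)$-subset $S\subseteq A$ has $v\notin N(S)$; there are $\binom j{k-1}$ such subsets, and each excludes at most $\delta n$ vertices by the codegree hypothesis, so the number of bad $v$ is at most $\binom j{k-1}\delta n$. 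Summing (union bound) the losses as $j$ runs from $k-1$ up to $r-1$ shows the number of $r$-cliques missed compared to $\binom nr$ is at most $\sum_{j=k-1}^{r-1}\binom j{k-1}\delta n \cdot (\text{count of the rest})$; carried out carefully this yields a loss of at most $\binom rk\delta\binom nr$, the stated bound. (The identity $\sum_{j=k-1}^{r-1}\binom j{k-1}=\binom rk$ is what produces the constant $\binom rk$.)

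For the lower bound in \eqref{cliques-over-edge}, I run the same extension argument but starting from the fixed edge $e$ (so the first $k$ vertices are already in place and span a clique) and adding $r-k$ further vertices. Each new vertex must avoid, for each $(k-1)$-subset $S$ of the current vertex set, the non-neighbourhood $N^c(S)$ of size $\le\delta n$; with $j$ vertices already present there are $\binom j{k-1}$ such subsets, and $j$ ranges over $k,\dots,r-1$, so the total number of excluded choices across the $r-k$ steps is at most $\sum_{j=k}^{r-1}\binom j{k-1}\delta n \le \binom rk\delta n \le \binom r{k-1}\delta n \cdot (\text{a constant})$. Converting the count of ordered extensions to unordered $(r-k)$-cliques divides by $(r-k)!$, and comparing with $k_{r-k}\ge \binom n{r-k}-\binom{r-k}k\delta\binom n{r-k}$ (or more simply bounding $\binom n{r-k}$ by $n^{r-k}/(r-k)!$ in the error term) gives the advertised bound $\extensions er \ge k_{r-k} - \frac{2k\delta n^{r-k}}{(r-k)!}\binom r{k-1}$; the factor $2k$ is slack absorbing both the extension losses and the gap between $k_{r-k}$ and $\binom n{r-k}$.

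The only real subtlety — and the step I expect to need the most care — is organising the union bound over the $(k-1)$-subsets at each stage so that one does not overcount, and tracking how the "good" count at step $j$ multiplies the per-step loss; a loose execution gives a worse constant than $\binom rk$, so one wants to peel off the losses against the already-reduced count rather than against $\binom nr$ itself (equivalently, bound $\prod_{j}(1-\binom j{k-1}\delta n/(\text{denominator}))$ from below by $1-\sum_j(\cdots)$). Everything else is routine falling-factorial arithmetic, and the generous constant $2k$ in \eqref{cliques-over-edge} leaves ample room.
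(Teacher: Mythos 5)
Your argument for \eqref{number-of-cliques} is essentially the paper's: build the clique one vertex at a time, lose at most $\binom{j}{k-1}\delta n$ choices when $j$ vertices are already placed, and sum via $\sum_{j=k-1}^{r-1}\binom{j}{k-1}=\binom{r}{k}$; the upper bounds in both displays are indeed immediate. That part is correct.

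The proposed greedy-from-$e$ argument for the lower bound in \eqref{cliques-over-edge} does not work, and the step $\binom{r}{k}\delta n\le \binom{r}{k-1}\delta n\cdot(\text{a constant})$ is false: $\binom{r}{k}\big/\binom{r}{k-1}=(r-k+1)/k$, which is unbounded in $r$. Your extension argument started from $V(e)$ yields $\kappa_e^{(r)}\ge \frac{n^{r-k}}{(r-k)!}\bigl(1-(\binom{r}{k}-1)\delta\bigr)$, an error of order $\binom{r}{k}\,\delta\, n^{r-k}/(r-k)!$, whereas \eqref{cliques-over-edge} asks for an error of order $k\binom{r}{k-1}\delta n^{r-k}/(r-k)!$, smaller by a factor of about $(r-k+1)/k^2$; no fixed constant, $2k$ or otherwise, closes a gap that grows linearly in $r$. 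For a concrete failure: with $G$ complete, $k=2$, $r=100$, $n=10^6$, $\delta=2/n$, your greedy lower bound is roughly $0.990\cdot n^{98}/98!$, while $k_{r-k}-\frac{2k\delta n^{r-k}}{(r-k)!}\binom{r}{k-1}$ is roughly $0.994\cdot n^{98}/98!$, so the inequality you need between them fails even though \eqref{cliques-over-edge} itself holds here. The structural reason is that the greedy discards candidate extensions at every step for reasons having nothing to do with $e$, while the target compares $\kappa_e^{(r)}$ against the actual count $k_{r-k}$ rather than against $\binom{n}{r-k}$. The paper instead writes $\kappa_e^{(r)}=k_{r-k}-g(e)$, where $g(e)$ counts the $(r-k)$-cliques $K$ whose union with $V(e)$ fails to span an $r$-clique; any such failure comes either from a vertex of $K$ lying in $V(e)$ or from a non-edge $f$ with $V(f)\subseteq V(e)\cup V(K)$, and any such $f$ necessarily meets $V(e)$. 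Tying every loss to the fixed $k$-set $V(e)$ is what makes the error scale as $\binom{r}{k-1}$ rather than $\binom{r}{k}$, and you need some version of this ``count the missing cliques relative to $k_{r-k}$'' device to obtain the claimed bound.
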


\begin{proof}
We first prove~\eqref{number-of-cliques}.
The upper bound is clear.
To see the lower bound, consider constructing a clique one vertex at a time.
Since each new vertex must form an edge with all $(k-1)$-subsets of the previously chosen vertices, the number of $r$-cliques is at least%
\COMMENT{Last step is equality as there is no contribution for smaller $s$.}
\begin{align*} 
& (n)_{k-1} \cdot (1-\delta)n \cdot (1 - k\delta)n \cdot (1 - \textstyle \binom {k+1} {k-1} \delta) n \cdots (1 - \textstyle \binom {r-1} {k-1} \delta) n /r! \\
& \qquad \qquad \geq (1-\sum_{s=k}^r \textstyle \binom {s-1} {k-1} \delta) (n)_r / r! = (1 - \binom r k \delta) \binom n r.
\end{align*}

We now verify~\eqref{cliques-over-edge}.
We have that $\extensions e r = k_{r-k} - g(e)$, where $g(e)$ is the number of $K \in \cliques k {r-k}$ such that $V(e) \cup V(K)$ does not induce an $r$-clique in $G$.
This happens when either $V(e) \cap V(K) \neq \emptyset$, or when there is a non-edge $f$ of $G$ contained in $V(e) \cup V(K)$.
The number of $K \in \cliques k {r-k}$ with $V(e) \cap V(K) \neq \emptyset$ is at most $k \cdot k_{r-k-1}$.
And for a fixed non-edge $f$ of $G$, the number of $K \in \cliques k {r-k}$ such that $V(e) \cap V(K) = \emptyset$ and $V(f) \subseteq V(e) \cup V(K)$
is at most $k_{r-k-|V(f) \setminus V(e)|}$ (which is $0$ if $r < k + |V(f) \setminus V(e)|$).
Thus%
   \COMMENT{In the final line below we use that $\binom{k}{k-j}=\frac{j+1}{k-j}\binom{k}{k-1-j}$.
	In the second line the first term is less that the $j=1$ term of the sum as $1 \leq \delta n$.}
\begin{align*}
g(e) & \leq k \cdot k_{r-k-1} + \sum_{j=1}^{k-1} \sum_{S \subseteq V(e) : |S| = k-j} |N^c(S)| k_{r-k-j} \\
     & \leq \frac {k n^{r-k-1}} {(r-k-1)!} + \sum_{j=1}^{\min \{k-1, r-k\}} \binom k {k-j} \cdot \frac {\delta n^j} {j!} \cdot \frac {n^{r-k-j}} {(r-k-j)!} \\
     & \leq 2 \delta n^{r-k} \sum_{j=1}^{\min \{k-1, r-k\}} \frac {\binom k {k-j}} {j! (r-k-j)!}
		=\frac{2 \delta n^{r-k}}{(r-k)!}\sum_{j=1}^{k-1} \binom{k}{k-j}\binom{r-k}{j}\\
		& \le \frac{2 k\delta n^{r-k}}{(r-k)!}\sum_{j=0}^{k-1} \binom{k}{k-1-j}\binom{r-k}{j} = \frac{2 k\delta n^{r-k}}{(r-k)!}\binom{r}{k-1},
\end{align*}
where the second inequality uses \eqref{number-of-cliques} and Proposition~\ref{other-degrees}.
\end{proof}

Let $G$ be a $k$-uniform hypergraph. 
An \defn{edge-weighting} of $G$ is a function $\omega : E(G) \to \mathbb R$.
For the rest of this section, it will be convenient to view the set of edge-weightings of $G$ as an $e(G)$-dimensional vector space $\Omega(G)$.
This space has a natural basis $\{\indicator_e : e \in E(G)\}$, where
\[
\indicator_e(f) := \begin{cases}
1 & \text{if } e = f, \\
0 & \text{otherwise.}
\end{cases}
\]
We shall identify $\indicator_e$ with $e$ itself, and sums of edges with the corresponding subgraphs of $G$;
thus we write $H = \sum_{e \in E(H)} e$ for every subgraph $H$ of $G$.
Let $\Omega_r(G) := \{\sum_{K \in \cliques k r} \omega(K) K : \omega(K) \in \mathbb R\}$ be the subspace of $\Omega(G)$ spanned by the $r$-cliques of $G$ and
let $\Omega^+_r(G) := \{\sum_{K \in \cliques k r} \omega(K) K : \omega(K) \geq 0\}$.
We claim that if $G \in \Omega^+_r(G)$, then $G$ has a \defn{fractional $K_r^{(k)}$-decomposition}.
Indeed, observe that, if $G \in \Omega^+_r(G)$, then there is an $\omega : \cliques k r \to \mathbb R_{\geq 0}$ such that
\begin{align*}
\sum_{e \in E(G)} e & = G 
    = \sum_{K \in \cliques k r} \omega(K) K 
    = \sum_{K \in \cliques k r} \omega(K) \sum_{e \in E(K)} e 
	  = \sum_{e \in E(G)} \Big( \sum_{K \in \cliques k r : e \in E(K)} \omega(K) \Big) e;
\end{align*}
that is, the weight over each edge is exactly $1$.
Moreover, since no weight is negative it must be the case that $\omega$ is a function from $\cliques k r (G)$ to $[0,1]$.

\subsection{Adding weight over an edge}

We now describe the basic edge-gadget that allows us to increase or decrease weight over a single edge by adjusting the weights of a suitable set of $r$-cliques.

\begin{proposition} \label{gadget}
Let $r > k$. There are $\alpha_0,\dots,\alpha_k \in \mathbb R$ so that the following holds.
Let $J$ be a copy of $\clique k {k+r}$ and let $e \in E(J)$.
Then the weighting $\omega : \cliques k r (J) \to \mathbb R$ defined by $\omega(K) := \alpha_{|V(e) \cap V(K)|}$ satisfies
\begin{itemize}
	\item[\rm(i)] $e = \sum_{K \in \cliques k r (J)} \omega(K) K$,
	\item[\rm(ii)] if $|V(e) \cap V(K)| = i$, then $ |\omega(K)| = |\alpha_i| \leq \frac {2^{k-i} (k-i)!} {\binom {r-k+i} i}$.
\end{itemize}
\end{proposition}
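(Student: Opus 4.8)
The plan is to construct the weighting $\omega$ explicitly by the three-stage averaging procedure sketched in Section~\ref{sketch}, working inside the single $(r+k)$-clique $J$, and then to read off the coefficients $\alpha_0,\dots,\alpha_k$ together with the claimed bounds. Throughout, fix $e\in E(J)$ and partition the $r$-cliques $K\in\cliques k r(J)$ according to $i:=|V(e)\cap V(K)|$, which ranges over $0\le i\le k$ (so $V(J)\setminus V(e)$ has $r$ vertices, and we are choosing $i$ vertices from $V(e)$ and $r-i$ from the remaining $r$; note that for each such $i$ the relevant cliques exist precisely because $|V(J)\setminus V(e)|=r\ge r-i$). Similarly partition the edges $f\in E(J)$ by $j:=|V(f)\cap V(e)|\in\{0,1,\dots,k\}$, with the $j=k$ class being exactly $\{e\}$ and the $j=0$ edges forming the clique on $V(J)\setminus V(e)$. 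The key structural observation is that the weight over an edge $f$ in class $j$ resulting from a weighting that assigns a common value $\beta$ to every clique in class $i$ depends only on $i$ and $j$: it equals $\beta\cdot M_{i,j}$ where $M_{i,j}$ is the number of $r$-cliques in class $i$ containing a fixed edge in class $j$, a quantity expressible as a product of binomial coefficients.

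The construction proceeds as follows. First I would set $\omega(K)=1/\binom rk$ for every $K$ in class $i=k$ (there are $\binom rk$ of these, all the $r$-cliques contained in $V(e)\cup(V(J)\setminus V(e))$ that contain all of $V(e)$... more precisely the $r$-cliques with $V(e)\subseteq V(K)$), so that the weight over $e$ becomes exactly $1$; this is valid since $r-k\le r$. This also puts some positive weight over edges in classes $j<k$, but that weight is the same for all edges within a fixed class. Second, I would subtract a common value $\alpha_{k-1}'$ from every clique in class $k-1$, chosen so as to zero out the weight over edges in class $k-1$; since such a correction does not affect the weight over $e$ (an $r$-clique in class $k-1$ cannot contain $e$, as $e$ has $k$ vertices in $V(e)$) but does uniformly change the weight over edges in classes $j\le k-1$, this is consistent. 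Continuing downward through $i=k-2,k-3,\dots,0$, at stage $i$ I subtract a common value from every clique in class $i$ to kill the (now common) weight over edges in class $i$; crucially, a clique in class $i$ contains only edges in classes $\le i$, so earlier corrections are undisturbed and the procedure terminates with zero weight over every $f\ne e$. Setting $\alpha_i$ to be the net value assigned to cliques in class $i$ after all stages proves (i). Uniqueness is not needed, only existence, so a fixed choice of $J$ and $e$ yields fixed $\alpha_i$ (one checks the values are independent of which $J$ one picks because everything is expressed through the combinatorial quantities $M_{i,j}$, which depend only on $r$ and $k$).

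For part (ii) I would prove the bound $|\alpha_i|\le 2^{k-i}(k-i)!/\binom{r-k+i}{i}$ by downward induction on $i$. The base case $i=k$ is immediate since $\alpha_k=1/\binom rk\le 1$ and the claimed bound is $1$. For the inductive step, $\alpha_i$ is obtained by dividing the accumulated weight over a class-$i$ edge (coming from contributions of classes $i+1,\dots,k$) by $M_{i,i}$, the number of class-$i$ $r$-cliques through a fixed class-$i$ edge. One has $M_{i,i}=\binom{r-i}{r-k}=\binom{r-i}{k-i}$ roughly, or in the normalization the paper uses, a quantity bounded below by $\binom{r-k+i}{i}\big/(k-i)!$ up to the factor, while each contributing term is controlled by the previous bounds times binomial factors counting how many class-$\ell$ cliques sit over a class-$i$ edge. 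Summing the geometric-type series and bounding the alternating sum crudely by its largest term gives the stated estimate, with the factor $2^{k-i}$ absorbing the number of terms $\ell\in\{i+1,\dots,k\}$ and the ratios of binomial coefficients. The main obstacle I anticipate is the bookkeeping in part (ii): getting the binomial identities for $M_{i,j}$ exactly right and then bounding the nested sum $\alpha_i=\big(\text{contributions}\big)/M_{i,i}$ tightly enough to land on $2^{k-i}(k-i)!/\binom{r-k+i}{i}$ rather than a weaker bound. The denominators $\binom{r-k+i}{i}$ are smallest when $i$ is small, which is exactly where the prefactor $2^{k-i}(k-i)!$ is largest, so the estimate has to be done with a little care, making sure the $(k-i)!$ genuinely comes out and is not overshot; I would handle this by peeling off one stage at a time and verifying that the worst case at stage $i$ is dominated by $2$ times the bound at stage $i+1$, scaled by $(k-i)$ and by the ratio of the two relevant binomial coefficients.
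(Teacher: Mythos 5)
Your construction for part (i) is essentially the paper's, recast procedurally: the observation that an $r$-clique in intersection-class $i$ touches only edges in classes $j\le i$ is precisely the upper-triangularity of the change-of-basis matrix $a_{ij}=\binom{k-i}{j-i}\binom{r-k+i}{j}$ that the paper inverts, and the coefficients produced by your stage-by-stage correction are exactly the solutions of $\sum_{j\ge i}a_{ij}\alpha_j=\mathbf 1_{\{i=k\}}$. Your downward induction for (ii) is also the induction on $k-i$ the paper uses, and your remark that the $\alpha_i$ depend only on $r,k$ via the $M_{i,j}$ is the right reason they are independent of $J$ and $e$.

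Part (ii), however, is left as a sketch and contains two errors that would derail the argument as stated. First, the base case: the claimed bound at $i=k$ is $2^{0}\,0!/\binom{r-k+k}{k}=1/\binom rk$, not $1$; your $\alpha_k=1/\binom rk$ achieves it with equality. Second, the inductive step cannot be closed by ``bounding the sum crudely by its largest term.'' Substituting the inductive hypothesis into $|\alpha_i|\le\sum_{j>i}(a_{ij}/a_{ii})|\alpha_j|$ and using $\binom{k-i}{j-i}(k-j)!=(k-i)!/(j-i)!$ together with $\binom{r-k+i}{j}\le\binom{r-k+j}{j}$ (valid since $i<j$) factors out $2^{k-i}(k-i)!/\binom{r-k+i}{i}$, leaving $\sum_{j=i+1}^{k}1/\bigl(2^{j-i}(j-i)!\bigr)$, which must be shown $\le 1$; this holds because it is a partial sum of $e^{1/2}-1<1$, i.e.\ a genuine convergence estimate. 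The largest term is $1/2$ and there are $k-i$ of them, so a largest-term bound gives only $(k-i)/2$, which is unbounded. These binomial manipulations and the choice of the exponential-type series are the substantive content of (ii), so as written the proposal has a real gap there.
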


As discussed in Section~\ref{sketch}, the idea of the proof is that to increase the weight over the edge $e$, we first increase the weight of every $r$-clique containing $e$.
This puts too much weight over the edges that share $k-1$ vertices with $e$, so we remove this weight by decreasing the weight on each $r$-clique that shares $k-1$ vertices with $e$.
Continuing in this fashion we eventually obtain a (signed) weighting of the $r$-cliques of $J$ such that the net weight over each edge $f$ is non-zero if and only if $e = f$.

\begin{proof}[Proof of Proposition~\ref{gadget}]
Let 
$$
\Omega^e(J) := \{\sum_{f \in E(J)} \omega_f f : \omega_{f_1} = \omega_{f_2} \text{ if } |V(e) \cap V(f_1)| = |V(e) \cap V(f_2)|\}
$$ 
be the $(k+1)$-dimensional subspace of $\Omega(J)$ in which the weight of each edge depends only on the size of its intersection with $e$.
For $0 \leq i \leq k$, let $E_i := \sum_{f \in E(J) : |V(e) \cap V(f)| = i} f$.
The $E_i$ are a natural basis for $\Omega^e(J)$.
For $0 \leq j \leq k$, let $F_j := \sum_{K \in \cliques k r (J) : |V(e) \cap V(K)| = j} K$.
We claim that the $F_j$ also form a basis for $\Omega^e(J)$.
To see this, we first calculate $F_0, \ldots, F_k$ in terms of $E_0, \ldots, E_k$.
Write $F_j = \sum_{i=0}^k a_{ij} E_i$.
Then $a_{ij}$ is the number of ways to extend an edge meeting $e$ in $i$ vertices to an $r$-clique meeting $e$ in $j$ vertices.
We need an additional $j-i$ vertices from $e$ and an additional $r-j-(k-i)$ vertices from outside $e$, so
\begin{equation} \label{matrix-coefficients}
a_{ij} = \binom {k-i} {j-i} \binom {r - (k-i)} {r - j - (k-i)} = \binom {k-i} {j-i} \binom {r-k+i} j. 
\end{equation}
In particular, $a_{ij} = 0$ for $i > j$ and $a_{ij} \neq 0$ when $i = j$, so the matrix $(a_{ij})$ is upper triangular with non-zero diagonal entries and hence is invertible. 
Thus there exist $\alpha_0, \ldots, \alpha_k$ such that
\begin{align} 
e = E_k & = \sum_{j=0}^k \alpha_j F_j\label{change-of-basis0}\\
& = \sum_{j=0}^k \alpha_j \sum_{i=0}^k a_{ij} E_i = \sum_{i=0}^k \big(\sum_{j=0}^k a_{ij} \alpha_j\big) E_i=\sum_{i=0}^k \big(\sum_{j=i}^k a_{ij} \alpha_j\big) E_i.\label{change-of-basis}
\end{align}
Set $\omega(K) = \alpha_{|V(e) \cap V(K)|}$. 
Then (\ref{change-of-basis0}) proves (i). 
To see (ii) first note that, by \eqref{matrix-coefficients} and (\ref{change-of-basis}), 
$\alpha_k = 1/a_{kk}=1 / \binom r k$, and, for $0\le i<k$, 
\begin{equation}\label{eq:irow}
\sum_{j=i}^k \binom {k-i} {j-i} \binom {r-k+i} j \alpha_j =\sum_{j=i}^k a_{ij} \alpha_j= 0.
\end{equation}
We shall prove by induction on $k-i$ that $|\alpha_i| \leq 2^{k-i} (k-i)! / \binom {r-k+i} i$. 
This holds with equality for $\alpha_k$, so assume that $0\le i < k$. Then by induction,%
\COMMENT{To see the 3rd inequality use that the RHS of the first lines is equal to
	$\frac {2^{k-i} (k-i)!} {\binom {r-k+i} i} \sum_{j=i+1}^k \frac {\binom {k-i} {j-i} \binom {r-k+i} j} {2^{j-i} (k-i)_{j-i} \binom {r-k+j} j}$}
\begin{align*}
|\alpha_i| & \stackrel{(\ref{eq:irow})}{\leq} \sum_{j=i+1}^k \frac {\binom {k-i} {j-i} \binom {r-k+i} j} {\binom {r-k+i} i} |\alpha_j| 
             \leq \sum_{j=i+1}^k \frac {\binom {k-i} {j-i} \binom {r-k+i} j} {\binom {r-k+i} i} \frac {2^{k-j} (k-j)!} {\binom {r-k+j} {j}}
			 	 \\& \,\,\,\leq \frac {2^{k-i} (k-i)!} {\binom {r-k+i} i} \sum_{j=i+1}^k \frac 1 {2^{j-i} (j-i)!}
				     \leq \frac {2^{k-i} (k-i)!} {\binom {r-k+i} i},
\end{align*}
as required.
\end{proof}

\subsection{Proof of Theorem~\ref{hypergraphs}}

We are now ready to put everything together to prove Theorem~\ref{hypergraphs}.

\begin{proof}[Proof of Theorem~\ref{hypergraphs}]
Let $\kappa := \sum_{e \in E(G)} \extensions e r / e(G)$ be the average value of $\extensions e r$, and let $w := 1/\kappa$.
By \eqref{cliques-over-edge} of Proposition~\ref{global-and-local-cliques},
\begin{equation} \label{top-right}
|\kappa-\extensions e r| \leq \frac{2 k\delta n^{r-k}}{(r-k)!}\binom{r}{k-1}.
\end{equation}
Observe also that (in $\Omega(G)$)
\begin{equation} \label{uniform-weighting}
\sum_{K \in \cliques k r (G)} K = \sum_{e \in E(G)} \extensions e r e.
\end{equation}
By Proposition~\ref{gadget}, for every $K \in \cliques k r (G)$ and every $e \in E(G)$, there exists $\omega_K^e$ with
\begin{equation} \label{top-left}
|\omega_K^e| \leq \frac {2^{k-j} (k-j)!} {\binom {r-k+j} j}, \ \ \ \text{where } j=|V(e)\cap V(K)|,
\end{equation}
and such that for every $J \in \cliques k {r+k} (G)$ with $e \in E(J)$, 
\begin{equation} \label{gadget-coefficients}
e = \sum_{K \in \cliques k r (J)} \omega_K^e K.
\end{equation}
Thus
\begin{align*}
G & = \sum_{e \in E(G)} \kappa we = \sum_{e \in E(G)} (\extensions e r w e + (\kappa - \extensions e r) w e) \\
  & \stackrel{\mathclap{\eqref{uniform-weighting}, \eqref{gadget-coefficients}}} 
	  = \sum_{K \in \cliques k r (G)} w K + \sum_{e \in E(G)} \frac {(\kappa-\extensions e r)w} {\extensions e {r+k}} \sum_{(J \in \cliques k {r+k} (G) : e \in E(J))} \sum_{K \in \cliques k r (J)} \omega_K^e K \\
  & = \sum_{K \in \cliques k r (G)} \Big(w  + \sum_{(J \in \cliques k {r+k}(G) : K \subseteq J)} \sum_{e \in E(J)} \frac {\omega_K^e(\kappa-\extensions e r)w} {\extensions e {r+k}}\Big)K,
\end{align*}
and it suffices to show that $w  + \sum_{J \in \cliques k {r+k}(G) : K \subseteq J} \sum_{e \in E(J)} \frac {\omega_K^e(\kappa-\extensions e r)w} {\extensions e {r+k}} \geq 0$ for every $K \in \cliques k r(G)$. 
(Indeed, then $G\in \Omega^+_r(G)$ and so $G$ has a fractional $K^{(k)}_r$-decomposition by our remarks at the end of Section~\ref{hyper-tools}.)
So fix $K \in \cliques k r (G)$, let $J \in \cliques k {r+k} (G)$ with $K \subseteq J$ and let $e \in E(J)$.
By Proposition~\ref{global-and-local-cliques},
\begin{align} \label{bottom}
\extensions e {r+k} & \geq (1 - \textstyle \binom r k \delta) (n)_r / r! - \frac{2 k\delta n^{r}}{r!}\binom{r+k}{k-1} \geq \frac{n^r}{2 r!},
\end{align}
say, with plenty of room to spare.
Now by \eqref{top-right}, \eqref{top-left} and \eqref{bottom}, if $j=|V(e)\cap V(K)|$ then%
   \COMMENT{to see the last inequality note that the second expression below is equal to
	$	\frac{2^{k-j+2}  k \delta}{n^k}\frac{(k-j)!j!(r-k)!r!(r)_{k-1}}{(r-k+j)!(r-k)!(k-1)!}=\frac{2^{k-j+2}  k^2 \delta}{n^k}\frac{(r)_{k-j}(r)_{k-1}}{\binom{k}{j}}.$}
\begin{align*}
\left| \frac {\omega_K^e(\kappa-\extensions e r)} {\extensions e {r+k}} \right| 
& \leq 
\frac{| \omega_K^e | |\kappa-\extensions e r |}{\extensions e {r+k}}
\leq
\frac {\frac {2^{k-j} (k-j)!} {\binom {r-k+j} j} \cdot 
        \frac{2 k\delta n^{r-k}}{(r-k)!}\binom{r}{k-1}} {n^r / 2 r!} 
	\\
  & =\frac{2^{k-j+2}  k^2 \delta}{n^k}\frac{(r)_{k-j}(r)_{k-1}}{\binom{k}{j}}
 \le    \frac {2^{k-j+2}  k^2 r^{2k-j-1} \delta}{\binom{k}{j}n^k},
\end{align*}
hence
\begin{align*}
& \left| \sum_{(J \in \cliques k {r+k}(G) : K \subseteq J)} \sum_{e \in E(J)} \frac {\omega_K^e(\kappa-\extensions e r)w} {\extensions e {r+k}} \right| 
\leq \frac {n^k} {k!} \sum_{j=0}^k \binom k {k-j} \binom{r}{j}\frac {2^{k-j+2}  k^2 r^{2k-j-1}\delta}{\binom{k}{j}n^k} \cdot w\\
& \qquad \qquad \leq \frac {2^{k+2} k^2 r^{2k-1} \delta w} {k!}\sum_{j=0}^k \frac {1}{2^{j}j!}				\leq  w,
\end{align*}
as required.
\end{proof}

In some cases it is possible to sharpen the computations in the proof of Theorem~\ref{hypergraphs} to lower the minimum codegree that guarantees the existence of a fractional $\clique k r$-decomposition.
Of particular interest is the case where $r=k+1$.
In this case, equation~\eqref{eq:irow} can be solved exactly to obtain $\alpha_j = (-1)^{k-j}/(k+1)\binom k j$.%
\COMMENT{$\binom {k-i} 0 \binom {1+i} i \alpha_i + \binom {k-i} 1 \binom {1+i} {1+i} \alpha_{i+1} = 0$, so $\alpha_i = - \frac {k-i} {i+1} \alpha_{i+1}$ with $\alpha_k = 1/\binom {k+1} k = 1/(k+1)$.}
Redoing the computation above with these correct values for $\omega^e_K$ shows that a minimum codegree of $(1- 1/k^2(k+1)2^{2k+1})n$ already guarantees a fractional $\clique k {k+1}$-decomposition, a substantial improvement over substituting $r=k+1$ into our general result.%
\COMMENT{$\left| \frac {\omega_K^e(\kappa-\extensions e r)} {\extensions e {r+k}} \right|  \leq \frac {\frac 1 {(k+1) \binom k i} \cdot 2k\delta n \binom {k+1} 2} {n^{k+1} / (k+1)!} = \frac {2\delta k! k \binom {k+1} 2} {n^k \binom k i}$.
The sum is then at most $\frac {n^k} {k!} \sum_{j=0}^k \binom k {k-j} \binom {k+1} j \frac {2\delta k! k \binom {k+1} 2} {n^k \binom k j} = \delta k^2 (k+1) \sum_{j=0}^k \binom {k+1} j$.

Also need $\delta \leq 1/k2^{2k+3}$ for \eqref{bottom}.}



\section{Bounds on the number of cliques} \label{sec:num-cliques}

We now turn to the special case of graphs. 
As for the more general case of hypergraphs, we shall be interested in the number $\kappa_e^{(r)}$ of $r$-cliques containing an edge $e$. 
In this section we first prove the following proposition relating the number of cliques of different sizes in a graph with high minimum degree, which is used repeatedly throughout the paper.

\begin{proposition}\label{cliqnos}
Let $r,n\in \N$, $\d:=1/2r$, and let~$G$ be a graph on~$n$ vertices with $\d(G)\geq (1-\d)n$. Then, for each $i\in[r]$,%
\[
k_{r-i}\leq (2r/n)^ik_r.
\] 
\end{proposition}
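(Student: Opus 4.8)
The natural approach is induction on $i$, and in fact it suffices to prove the single-step bound $k_{r-i} \le (2r/n)\, k_{r-i+1}$ for each $i \in [r]$; the stated inequality then follows by telescoping. So the plan is to fix $j$ with $0 \le j \le r-1$ and show that $k_{j} \le (2r/n)\, k_{j+1}$, i.e.\ that $n \cdot k_j \le 2r \cdot k_{j+1}$. (For $j < 0$ both sides are $0$, and for $j=0$ we have $k_0 = 1$ while $k_1 = n$, so the inequality reads $n \le 2r \cdot n$, which is fine; the content is in the range where cliques genuinely exist.)

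\textbf{Key step: a double-counting argument.} First I would count pairs $(K, v)$ where $K$ is a $j$-clique and $v \in V(G) \setminus V(K)$ is a vertex adjacent to every vertex of $K$, so that $V(K) \cup \{v\}$ is a $(j{+}1)$-clique. On one hand, each $(j{+}1)$-clique arises from exactly $j+1$ such pairs (choose which of its vertices plays the role of $v$), so the number of pairs is $(j+1)\, k_{j+1}$. On the other hand, for a fixed $j$-clique $K$, the number of valid $v$ is $|N(v_1) \cap \dots \cap N(v_j)| - $ (a small correction for vertices of $K$ itself, but those aren't in $N(\cdot)$ anyway under the convention here — I'd just lower-bound). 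Using $\delta(G) \ge (1-\delta)n$ with $\delta = 1/2r$, inclusion–exclusion gives $|N(v_1) \cap \dots \cap N(v_j)| \ge n - j \cdot \delta n = (1 - j/2r) n \ge (1 - r/2r) n = n/2$ since $j \le r-1 < r$. Hence the number of pairs is at least $k_j \cdot (n/2 - j) \ge k_j \cdot (n/2 - r)$, and provided $n$ is not absurdly small this is at least, say, $k_j \cdot n/4$ — but to get the clean constant $2r$ I'd instead argue $(j+1) k_{j+1} \ge k_j \cdot (n/2 - (j-1))$ where the $-(j-1)$ discards vertices of $K$; rearranging, $k_j \le \frac{2(j+1)}{n - 2(j-1)} k_{j+1}$, and bounding $2(j+1) \le 2r$ and $n - 2(j-1) \ge n - 2r \ge n/2$ (valid once $n \ge 4r$, which is subsumed by the hypotheses where this proposition is applied) gives $k_j \le (4r/n)\, k_{j+1}$. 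To recover exactly $2r/n$ one should be a little more careful: the codegree bound actually gives $|N(v_1)\cap\dots\cap N(v_j)| \ge n - j\delta n$, and since the relevant range has $j \le r-1$ one gets at least $n(1 - (r-1)/2r) = n(r+1)/2r > n/2$; combined with discarding at most $j$ vertices of $K$, and using that we only need the bound for $i \ge 1$, the constant works out. I would present the cleanest version that yields the stated $(2r/n)^i$.

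\textbf{Main obstacle.} The only real subtlety is the bookkeeping of the additive error terms — the $-j$ or $-(j-1)$ correction from excluding vertices of $K$, and whether the inclusion–exclusion lower bound on the common neighbourhood is tight enough to give the claimed constant $2r$ rather than a worse constant like $4r$. This is purely a question of being careful with which $n$ versus $n - O(r)$ appears where, and of noting that the statement is vacuous or trivial when $n$ is small relative to $r$ (if $\delta(G) \ge (1-1/2r)n$ forces enough structure). I don't anticipate any conceptual difficulty; the argument is a standard ``grow the clique one vertex at a time'' count, essentially the same idea already used in the proof of \eqref{number-of-cliques} in Proposition~\ref{global-and-local-cliques}, specialised to $k=2$. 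Once the single-step inequality $k_{r-i} \le (2r/n) k_{r-i+1}$ is in hand, multiplying the inequalities for $i = 1, \dots, i$ finishes the proof.
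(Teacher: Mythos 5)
Your telescoping argument is correct and is essentially the same as the paper's proof, which also "grows the clique one vertex at a time" and double-counts — the paper simply performs all $i$ extension steps in one product $\frac{1}{i!}\prod_{j=1}^i(n-(r-i+j-1)\delta n)\,k_{r-i}\le\binom{r}{i}k_r$ rather than chaining one-step bounds. One small tidy-up worth making: since $v_m\notin N(v_m)$ under the paper's convention, the set $N(v_1)\cap\cdots\cap N(v_j)$ already excludes all vertices of $K$, so the extra "$-j$" or "$-(j-1)$" corrections you keep subtracting are unnecessary; the clean statement is $(j+1)k_{j+1}\ge k_j\cdot(n-j\delta n)\ge k_j\cdot n(r+1)/2r$ for $j\le r-1$, which with $j+1\le r<r+1$ gives $k_j\le(2r/n)k_{j+1}$ directly.
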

\begin{proof}
Let $i\in[r]$. For each clique $K\in \KK_{r-i}$, using the minimum degree of~$G$, the number of cliques in $\KK_r$ containing $K$ is at least
\[
\frac{1}{i!}\prod_{j=1}^i(n-(r-i+j-1)\d n)\geq \frac{1}{i!}\left(\frac n2\right)^i.
\]
Each clique $K\in \KK_r$ contains $\binom{r}{i}$ cliques in $\KK_{r-i}$. Therefore,
\[
\frac{1}{i!}\left(\frac n2\right)^ik_{r-i}\leq \binom{r}{i}k_r\leq \frac{r^i}{i!}k_r,
\]
and thus $k_{r-i}\leq (2r/n)^ik_r$.
%
\end{proof}

Our next lemma gives a range of bounds on the number of cliques containing a fixed smaller clique (and, in particular then, an edge).

\begin{lemma}\label{cliqest} Let $r,n\in \N$ and $\d\leq 1/2r$, and let $G$ be a graph with $n$ vertices and $\d(G)\geq (1-\d) n$. 
Then, for each integer $t < r$ and each subset $Z\subset V(G)$, with $|Z|=t$ and $G[Z]\in \KK_t$, we have
\begin{enumerate}[label = \rm{(\roman{enumi})}]
\item $| \kappa_{Z}^{(r)} - k_{r-t} | \leq 2t\delta r k_{r-t}$, and \label{est1}
\item $\big|\kappa_Z^{(r)}-k_{r-t}+|\bigcup_{z\in Z}N^c(z)|k_{r-t-1}\big|\leq 6(t\delta r)^2k_{r-t}$. \label{est2}
\item For each $xy\in E(G)$, we have \label{est3}
\[
\Big|\kappa^{(r)}_{xy}-k_{r-2}-\sum_{i=1}^3(-1)^i \sum_{Y\subset N^c(x)\cup N^c(y):|Y|=i} \kappa_Y^{(r-2)}\Big|
\leq 11(\d r)^4k_{r-2}.
\]
\end{enumerate}
\end{lemma}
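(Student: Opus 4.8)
The plan is to prove all three parts by an inclusion–exclusion argument, tracking the $r$-cliques that fail to extend a given small clique $Z$, and then bounding the error terms via Proposition~\ref{cliqnos}. For part~\ref{est1}, I would write $\kappa_Z^{(r)} = k_{r-t} - g(Z)$, where $g(Z)$ counts the $K \in \KK_{r-t}$ with $V(K) \cup Z$ not inducing an $r$-clique; this happens either because $V(K) \cap Z \neq \emptyset$ or because some non-edge of $G$ lies inside $V(K) \cup Z$. The first contributes at most $t\, k_{r-t-1} \le t(2r/n)k_{r-t}$ by Proposition~\ref{cliqnos}. The second: for each vertex $z \in Z$ and each $w \in N^c(z)$, the number of $(r-t)$-cliques containing $w$ and disjoint from $Z$ is at most $k_{r-t-1}$, giving a bound of $\sum_{z \in Z}|N^c(z)| \, k_{r-t-1} \le t \delta n \cdot (2r/n) k_{r-t} = 2t\delta r\, k_{r-t}$; combining and absorbing the (smaller) first term gives~\ref{est1}.

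For part~\ref{est2}, I would refine this by a truncated inclusion–exclusion: the first-order term $\sum_{z \in Z} |N^c(z)| k_{r-t-1}$ overcounts $(r-t)$-cliques $K$ that hit $N^c(z)$ for two different reasons, so the difference $|\kappa_Z^{(r)} - k_{r-t} + |\bigcup_{z\in Z} N^c(z)|\, k_{r-t-1}|$ should be controlled by second-order overlap terms. These are of three kinds: $K$ meeting $Z$ (bounded by $t \cdot k_{r-t-1}$, itself $O((t\delta r/n) \cdot k_{r-t-1}) = O((t\delta r)^2 k_{r-t}/ \ldots)$ — here I need to be slightly careful and note $k_{r-t-1} \le (2r/n) k_{r-t}$ is only first order, so I should instead bound the $Z$-meeting contribution against the error by using that it is compensated once in the union term); $K$ containing two vertices of $\bigcup_z N^c(z)$ (at most $(t\delta n)^2 k_{r-t-2} \le (t\delta n)^2 (2r/n)^2 k_{r-t} = 4(t\delta r)^2 k_{r-t}$); and $K$ containing a single vertex $w$ lying in $N^c(z_1) \cap N^c(z_2)$ for distinct $z_1, z_2 \in Z$ — but such $w$ are multiply counted in $|\bigcup_z N^c(z)|$ versus $\sum_z |N^c(z)|$, so I must phrase the middle term as the union, not the sum, precisely to absorb this. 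Collecting the genuinely second-order contributions and checking the constant gives the bound $6(t\delta r)^2 k_{r-t}$.

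Part~\ref{est3} is the same idea carried one further step, applied with $Z = \{x,y\}$ (so $t=2$) and with the $(r-2)$-cliques through small subsets $Y$ of $N^c(x) \cup N^c(y)$ as the inclusion–exclusion strata. Writing $W := N^c(x) \cup N^c(y)$, an $(r-2)$-clique $K$ disjoint from $\{x,y\}$ fails to extend $\{x,y\}$ to an $r$-clique exactly when $V(K) \cap W \neq \emptyset$ (ignoring for a moment cliques meeting $\{x,y\}$, which contribute a lower-order $2k_{r-3}$ term); by inclusion–exclusion over which vertices of $W$ lie in $V(K)$, one gets $\kappa_{xy}^{(r)} = k_{r-2} + \sum_{i \ge 1} (-1)^i \sum_{Y \subseteq W, |Y|=i} \kappa_Y^{(r-2)} \pm (\text{cliques meeting }\{x,y\})$, and truncating after $i=3$ leaves an error governed by the $i=4$ terms: $\sum_{|Y|=4, Y \subseteq W} \kappa_Y^{(r-2)} \le \binom{|W|}{4} k_{r-6} \le \frac{(2\delta n)^4}{24}(2r/n)^6 k_{r-2}$ — wait, I should instead bound $\kappa_Y^{(r-2)} \le k_{r-6}$ and $|W|^4/24 \cdot k_{r-6} \le (2\delta n)^4/24 \cdot (2r/n)^4 \cdot (2r/n)^{-2}$... more carefully: $k_{r-6} \le (2r/n)^4 k_{r-2}$, so the $i=4$ sum is at most $(2\delta n)^4 (2r/n)^4 k_{r-2}/24 = \tfrac{16}{24}(\delta r \cdot 4)^{?}\cdots$; the point is it is $O((\delta r)^4 k_{r-2})$ with absolute constant, and the cliques-meeting-$\{x,y\}$ term $2k_{r-3} \le 2(2r/n)k_{r-3} $... no, $2k_{r-3} = O(r/n \cdot k_{r-2})$ which is far smaller than $(\delta r)^4 k_{r-2}$ only if $\delta \gtrsim (n)^{-1/4}$, so I may need the hypothesis $\delta \le 1/2r$ together with a trivial lower bound like $\delta n \ge 1$ (equivalently $n$ not too small) to absorb it, or I should simply note $2 k_{r-3} \le 2(2r/n) k_{r-2}$ and that this is dominated; I will fold it into the $11(\delta r)^4 k_{r-2}$ slack along with careful accounting of the cross-terms where a vertex of $V(K)$ lies in both $N^c(x)$ and $N^c(y)$ (handled automatically since we take $Y \subseteq N^c(x)\cup N^c(y)$ as a union, so no double counting). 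The main obstacle throughout is bookkeeping: making sure the inclusion–exclusion is set up so that multiply-covered vertices are counted once (forcing the use of $\bigcup_z N^c(z)$ rather than the sum in~\ref{est2}), and verifying that every truncation error is genuinely of the next order in $\delta r$ — here Proposition~\ref{cliqnos}, which converts each "missing" vertex into a factor $2r/n$ and each forced non-edge into a factor $\delta n$, does the heavy lifting, but one must confirm the absolute constants $2, 6, 11$ actually come out, which is the one place a careful (if routine) computation is unavoidable.
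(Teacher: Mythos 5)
Your underlying strategy is the same as the paper's: count $r$-cliques through $Z$ by inclusion--exclusion over which vertices of $\bigcup_{z\in Z}N^c(z)$ a candidate $(r-t)$-clique hits, truncate using the Bonferroni inequalities, and convert the tail terms via Proposition~\ref{cliqnos}. But there is a concrete gap that makes your constants fail to come out. In this paper's notation $z\in N^c(z)$, so $Z\subset \bigcup_{z\in Z}N^c(z)$, and hence an $(r-t)$-clique $K$ extends $Z$ to an $r$-clique \emph{if and only if} $V(K)\cap\bigcup_{z\in Z}N^c(z)=\emptyset$: there is no separate ``$K$ meets $Z$'' case. This gives the single exact identity
\[
\kappa_Z^{(r)}=k_{r-t}+\sum_{i\ge1}(-1)^i\sum_{Y\subset\bigcup_{z\in Z}N^c(z)\,:\,|Y|=i}\kappa_Y^{(r-t)},
\]
and Bonferroni at level $\ell$ bounds the truncation error by $\binom{t\delta n}{\ell}k_{r-t-\ell}\le\frac{(2t\delta r)^\ell}{\ell!}k_{r-t}$. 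Taking $\ell=1$ gives (i) with constant exactly $2$; $\ell=2$ plus one application of (i) to replace $\kappa_{\{w\}}^{(r-t)}$ by $k_{r-t-1}$ gives (ii) with constant $6$; and $\ell=4$, $t=2$ gives (iii) since $4^4/4!=256/24<11$.

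Because you treat the cliques meeting $Z$ as a separate correction, your bounds degrade. In (i), the ``meets $Z$'' term $t\,k_{r-t-1}\le(2tr/n)k_{r-t}$ is of the same order as the non-edge term $2t\delta r\,k_{r-t}$ (indeed $\delta n\ge1$ is forced by the degree hypothesis, so $2tr/n$ is comparable, not negligible), and their sum is roughly $4t\delta r\,k_{r-t}$, not $2t\delta r\,k_{r-t}$ --- it cannot simply be ``absorbed.'' In (iii) this same unnoticed inclusion $\{x,y\}\subset N^c(x)\cup N^c(y)$ is the source of your uncertainty about the ``$2k_{r-3}$ correction'': there is none, because cliques meeting $\{x,y\}$ are already those meeting the union. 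Once you fold $Z$ into the union, the separate case vanishes and the bookkeeping collapses to the paper's clean computation.
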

\begin{proof}
Given $Z\subset V(G)$ with $|Z|=t$ and $G[Z]\in \KK_t$, we can obtain an $r$-clique $K$ containing $Z$ by extending $Z$ by the vertex set of an $(r-t)$-clique which lies in $\bigcap_{z\in Z}N(z)$.
By the inclusion-exclusion principle,
\begin{align*}
\kappa_{Z}^{(r)}
& = k_{r-t} - \big|\big\{K \in \KK_{r-t} : V(K) \cap \bigcup_{z\in Z}N^c(z) \neq \emptyset \big\}\big|  \nonumber 
\\
 & = k_{r-t} + \sum_{i=1}^{r-t} (-1)^{i}\sum_{Y\subseteq \bigcup_{z\in Z}N^c(z) \colon |Y|=i}\kappa_{Y}^{(r-t)}. 
\end{align*}
So by the Bonferroni inequalities (which say that the natural sequence of partial sums taken from the inclusion-exclusion formula alternately over- and underestimate the size of a union of sets),%
\COMMENT{Successive approximations to inclusion-exclusion alternately over- and underestimate.}
for each $\ell \leq r-t+1$,
\begin{multline}
\Big|\kappa_{Z}^{(r)}- k_{r-t} - \sum_{i=1}^{\ell-1} (-1)^{i}\sum_{Y\subseteq \bigcup_{z\in Z}N^c(z) \colon  |Y|=i}\kappa_{Y}^{(r-t)}\Big| \\
 \leq \sum_{Y\subseteq \bigcup_{z\in Z}N^c(z) \colon |Y|=\ell}\kappa_{Y}^{(r-t)}
 \leq \binom {t\delta n} \ell k_{r-t-\ell} \leq \frac {(t \delta n)^\ell} {\ell!} k_{r-t-\ell} \leq \frac {(2 t \delta r)^\ell} {\ell!} k_{r-t}, \label{inclusion-exclusion}
\end{multline}
where we have used Proposition~\ref{cliqnos} in the final inequality.
As $\ell$ increases, we obtain an increasingly accurate estimate for $\kappa_Z^{(r)}$ (provided $t$ is not too large). 
In particular, setting $\ell=1$ we gain \ref{est1}, and setting $\ell=4$ in the case where $|Z|=2$ we gain (iii).

Finally, for \ref{est2}, using \ref{est1} with clique size $r-t$ and set $\{x\}$ for each $x\in \bigcup_{z\in Z}N^c(z)$, we have
\begin{align*}
\Big|\kappa_Z^{(r)}-k_{r-t}+\big|\bigcup_{z\in Z}N^c(z)\big|k_{r-t-1}\Big|
 & = \Big|\kappa_Z^{(r)}-k_{r-t}+\sum_{x\in \bigcup_{z\in Z}N^c(z)}k_{r-t-1}\Big|
\\ &
\leq \Big|\kappa_Z^{(r)}-k_{r-t}+\sum_{x\in \bigcup_{z\in Z}N^c(z)}\kappa_{\{x\}}^{(r-t)}\Big|
\\
&\qquad +\sum_{x\in \bigcup_{z\in Z}N^c(z)}|\kappa_{\{x\}}^{(r-t)}-k_{r-t-1}|
\\
&\hspace{-0.35cm}\stackrel{ \eqref{inclusion-exclusion},\ref{est1}}{\leq} 2(t\d r)^2k_{r-t}+\Big|\bigcup_{z\in Z}N^c(z)\Big|2\d rk_{r-t-1}
\\
&\leq 2t^2\d^2r^2k_{r-t}+\d t n\cdot 4\d r^2k_{r-t}/n\leq 6t^2\d^2 r^2 k_{r-t},
\end{align*}
where we have used Proposition~\ref{cliqnos} in the penultimate inequality.
\end{proof}

As noted in Section~\ref{sketch}, we shall want to construct edge-gadgets using only some of the $r$-cliques in the graph $G$. 
We will in fact have a small subset $X\subset V(G)$, and wish to avoid using $r$-cliques which have a large intersection with $X$. 
Our final result of this section demonstrates that there are not many such cliques.
\begin{proposition}\label{KXsmall} 
Let $r\geq 3$,%
\COMMENT{Need 3 as apply for $r-1$ with $r \geq 4$.}
$n\in \N$ and $\d:=1/600r^{3/2}$.
Let~$G$ be a graph on~$n$ vertices with $\d(G)\geq (1-\d)n$, and let $X\subset V(G)$ with $|X| \leq \delta r n$. 
Let
\[
\AA:=\{K\in \KK_r:|V(K)\cap X| \geq r^{1/2}\}.
\]
Then $|\AA|\leq k_r/r^2$.
\end{proposition}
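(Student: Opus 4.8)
The plan is to bound $|\AA|$ by a union bound over the possible ways an $r$-clique can meet $X$ in at least $s := \lceil r^{1/2}\rceil$ vertices. Fix an integer $i$ with $s \le i \le r$. Any $K \in \KK_r$ with $|V(K)\cap X| = i$ is obtained by choosing an $i$-set $S \subseteq X$ that induces a clique, together with an $(r-i)$-clique in $\bigcap_{z\in S} N(z)$; the latter is counted by $\kappa_S^{(r)}$ for that particular $S$ (with the convention that cliques of size $r$ are just $\KK_r$ and we never overcount within a fixed $i$). Hence
\[
|\AA| \;\le\; \sum_{i=s}^{r} \binom{|X|}{i} \max_{S \subseteq X,\ |S|=i}\ (\text{number of }r\text{-cliques }K\text{ with }V(K)\cap X = S) \;\le\; \sum_{i=s}^{r} \binom{|X|}{i} k_{r-i},
\]
where in the last step I use the trivial bound that the number of $r$-cliques containing a fixed $i$-set $S$ is at most $k_{r-i}$ (each such clique restricted to $V(G)\setminus X$, or even just to $V(G)\setminus S$, gives an $(r-i)$-clique, and $\kappa_S^{(r)} \le k_{r-i}$). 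Note $\delta = 1/600 r^{3/2} \le 1/2r$ for $r \ge 3$, so Proposition~\ref{cliqnos} applies.

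Now I substitute the bounds $\binom{|X|}{i} \le |X|^i/i! \le (\delta r n)^i/i!$ and $k_{r-i} \le (2r/n)^i k_r$ from Proposition~\ref{cliqnos}. This gives
\[
|\AA| \;\le\; \sum_{i=s}^{r} \frac{(\delta r n)^i}{i!}\left(\frac{2r}{n}\right)^i k_r \;=\; k_r \sum_{i=s}^{r} \frac{(2\delta r^2)^i}{i!}.
\]
With $\delta = 1/600 r^{3/2}$ we have $2\delta r^2 = r^{1/2}/300 \le r^{1/2}$, so the summand is at most $(r^{1/2})^i/i! \le (r^{1/2})^i$ and the sum is a geometric-type tail starting at $i = s \approx r^{1/2}$. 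The point is that $r^{1/2}/300$ is much smaller than $i$ for every $i \ge s = \lceil r^{1/2}\rceil$, so the factorial crushes the numerator: each term is at most $(2\delta r^2/s)^{\,i}\cdot(\text{something})$, and more cleanly, for $i \ge r^{1/2}$ one has $(2\delta r^2)^i/i! \le (2\delta r^2 e/i)^i \le (2\delta r^2 e / r^{1/2})^i = (e r^{1/2}/300)^i \cdot$... — wait, that is not small. Let me instead keep the exact constant: $2\delta r^2 = r^{1/2}/300$, and for $i \ge s \ge r^{1/2}$, $i! \ge (i/e)^i \ge (r^{1/2}/e)^i$ (for the first few terms this is weak, so better to use Stirling only for the leading term and bound the tail geometrically). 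Concretely, the ratio of consecutive terms is $2\delta r^2/(i+1) \le r^{1/2}/(300(i+1)) \le 1/300$ once $i \ge s$, so
\[
|\AA| \;\le\; k_r \cdot \frac{(2\delta r^2)^s}{s!} \cdot \frac{1}{1 - 1/300} \;\le\; 2 k_r \cdot \frac{(r^{1/2}/300)^{r^{1/2}}}{(r^{1/2})!}.
\]
Using $(r^{1/2})! \ge (r^{1/2}/e)^{r^{1/2}}$, the leading factor is at most $2(e/300)^{r^{1/2}} \le 2 \cdot 100^{-r^{1/2}}$, which is at most $1/r^2$ for all $r \ge 3$ (indeed $100^{-r^{1/2}} \le 100^{-\sqrt 3} < 1/2000$). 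Hence $|\AA| \le k_r/r^2$.

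The routine calculation to watch is the choice of where to split Stirling versus geometric tail; the only genuine subtlety — and the one step I would write out carefully — is the first inequality, namely justifying that $|\AA| \le \sum_{i=s}^r \binom{|X|}{i} k_{r-i}$ without double counting: one sorts each $K \in \AA$ by the set $V(K) \cap X$, which has some size $i \ge s$, and for a fixed $i$-set $S$ the number of cliques $K$ with $V(K)\cap X = S$ is at most the number of $(r-i)$-cliques in $V(G)\setminus X$, which is at most $k_{r-i}$. Everything after that is a clean computation with the two estimates from Proposition~\ref{cliqnos}, and the slack in the constant $600$ is enormous, so no optimisation is needed.
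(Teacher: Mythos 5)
Your proof is correct and follows essentially the same route as the paper: bound $|\AA|$ by summing over the size $i\ge\lceil r^{1/2}\rceil$ of $V(K)\cap X$, use $\binom{|X|}{i}k_{r-i}\le (2\delta r^2)^i k_r/i!$ via Proposition~\ref{cliqnos}, and then observe that the resulting tail sum is dominated by its first term since the ratio of consecutive terms is at most $1/300$, finishing with a Stirling-type bound. The only differences from the paper are cosmetic: you bound the tail by a geometric factor $300/299<2$ where the paper crudely multiplies by $r$, and you write $(r^{1/2})!$ informally where one should keep $s=\lceil r^{1/2}\rceil$ and use $s!\ge(s/e)^s$ together with $s\ge r^{1/2}$; neither affects correctness.
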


\begin{proof}
Let $t := \lceil r^{1/2} \rceil$.
Using Proposition~\ref{cliqnos}, we have that
\begin{align*}
|\AA| & \leq \sum_{i=t}^r \binom {\delta r n}      i   k_{r-i} 
        \leq \sum_{i=t}^r \frac {(\delta r n)^i}  {i!} k_{r-i} 
        \leq \sum_{i=t}^r \frac {(2\delta r^2)^i} {i!} k_r 
				   = \sum_{i=t}^r \frac {(r^{1/2}/300)^i} {i!} k_r \\
		  & \leq r \frac {(r^{1/2}/300)^t} {t!} k_r 
			  \leq r (t/300)^t (e/t)^t k_r 
				   \le r k_r / 100^t
				\leq k_r / r^2. \qedhere
\end{align*}
\end{proof}

\section{Adding weight over an edge}\label{secweight}

Recall from Section~\ref{sketch} that, in order to turn our initial uniform weighting into a fractional clique decomposition, our aim is to construct edge-gadgets which adjust the weight over an edge~$e$ by adjusting the weights of some $r$-cliques. 
In our proof of Theorem~\ref{hypergraphs} (for $k =2$), we implicitly used an edge-gadget $\psi_e$ that was the average of a basic edge-gadget $\psi_e^{J}$ over all cliques $J \in \KK_{r+2}$ containing~$e$ (defined more explicitly in Section~\ref{sketch}). 
This averaging ensured that the weight of any given clique was not altered so much that it became negative. 
Using some simple preprocessing (namely removing $r$-cliques one-by-one until any further removal violates the minimum degree condition) we can reduce the total adjustment we need to make to the initial weighting. 
In fact, for most of the edges we will only need to make small adjustments, leaving us most concerned with certain `bad' edges. 
Moreover, for each edge $e\in E(G)$, the edge-gadget $\psi_e$ requires larger adjustments to be made to those cliques whose intersection with $V(e)$ is larger. 
Thus we are limited by the adjustment we ask from cliques which contain many vertices in bad edges. 
By avoiding basic edge-gadgets which require adjustment to the weight of such cliques, we can reduce the minimum degree condition needed for these techniques to work.

In this section, we give sufficient conditions on a subset $\mathcal{A} \subseteq \KK_r$ to ensure we can construct good edge-gadgets that only change the weights of cliques in $\mathcal{A}$. 

\begin{definition}\label{edgedist}
Given a graph~$G$ we say that $\AA\subset \KK_r$ is \emph{well-distributed} if, for each $e\in E(G)$, there are at least $k_r/2$ sets $A\subset V(G) \setminus V(e)$ for which $|A|=r$ and, for each subset $B\subset V(e)\cup A$ with $|B|=r$, $G[B]\in \AA$.
\end{definition}

Informally, $\AA$ is well-distributed if the $r$-cliques it contains can be used to build many different basic edge-gadgets $\psi_e$ for each edge $e$.

\begin{lemma}\label{addweightedge} 
Let $r\geq 3$ and let $G$ be a graph on $n$ vertices.
Suppose that $k_r>0$ and that $\AA\subset \KK_r$ is well-distributed. Then for each edge $e\in E(G)$ there exists a function $\psi_e:\AA\to \R$ so that the following holds.

\begin{enumerate}[label = \rm{(\roman{enumi})}]
\item For all $e,f\in E(G)$,
\[
\sum_{K\in \AA\colon f\in E(K)}\weights{\psi}{e}{K}= \mathbf{1}_{\{e=f\}}.
\]
\item For all $K\in \AA$ and $e\in E(G)$, if $i=|V(K)\cap V(e)|$, then
$
|\weights{\psi}{e}{K}|\leq 6n^{i}/r^{i}k_r$.
\end{enumerate}
\end{lemma}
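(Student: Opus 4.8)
The plan is to build, for each edge $e$, a single basic edge-gadget from an averaged family and then average again over admissible choices, exactly paralleling the hypergraph argument but restricting attention to cliques in $\AA$. First I would recall the basic construction: given a copy $J$ of $K_{r+2}$ containing $e$, Proposition~\ref{gadget} (with $k=2$) supplies coefficients $\alpha_0,\alpha_1,\alpha_2$ with $|\alpha_i| \le 2^{2-i}(2-i)!/\binom{r-2+i}{i}$, so that $\psi_e^J(K) := \alpha_{|V(e)\cap V(K)|}$ is a weighting of $\cliques{2}{r}(J)$ with $e = \sum_{K}\psi_e^J(K)K$. The point of the well-distributed hypothesis is that we only want to use those $J$ all of whose $r$-subsets induce cliques lying in $\AA$; write $a$ for a set $A \subset V(G)\setminus V(e)$ with $|A|=r$ such that every $B \subset V(e)\cup A$ with $|B|=r$ has $G[B]\in\AA$ (note $V(e)\cup A$ spans a $K_{r+2}$, and well-distributedness gives at least $k_r/2$ such sets $A$). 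For each such $A$ let $J_A := G[V(e)\cup A]$, and set
\[
\psi_e := \frac{1}{N_e}\sum_{A \text{ admissible for }e}\psi_e^{J_A},
\]
where $N_e \ge k_r/2$ is the number of admissible sets $A$, extending each $\psi_e^{J_A}$ by zero outside $\cliques{2}{r}(J_A)$.

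Next I would verify (i). Linearity gives $\sum_{K\in\AA : f\in E(K)}\psi_e(K) = \frac{1}{N_e}\sum_A \sum_{K\in\cliques{2}{r}(J_A) : f\in E(K)}\psi_e^{J_A}(K)$. If $f\in E(J_A)$ this inner sum equals $\mathbf 1_{\{e=f\}}$ by Proposition~\ref{gadget}(i); if $f\notin E(J_A)$ the inner sum is empty, hence $0$. When $e\ne f$ every term is $0$ and we get $0$; when $e=f$, $e\in E(J_A)$ for every admissible $A$, so every term contributes $1$ and the average is $1$. (All cliques $K$ appearing here lie in $\AA$ by the definition of admissibility, so $\psi_e$ is genuinely a function on $\AA$.)

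For (ii), fix $K\in\AA$ and $e\in E(G)$ with $i = |V(K)\cap V(e)|$. Only admissible $A$ with $V(K)\subset V(e)\cup A$ contribute, and for each such $A$ the coefficient is $\alpha_i$ with $|\alpha_i|\le 2^{2-i}(2-i)!/\binom{r-2+i}{i}\le 2^{2-i}\cdot 2!/\binom{r-2+i}{i}$; one checks this is at most $6/r^i$ for $i\in\{0,1,2\}$ once $r\ge 3$ (for $i=0$: $4\cdot 1/1 = 4 \le 6$; $i=1$: $2/(r-1)\le 6/r$; $i=2$: $1/\binom r2 = 2/r(r-1)\le 6/r^2$), so $|\psi_e(K)| \le \frac{1}{N_e}\cdot(\text{number of admissible }A\text{ with }V(K)\setminus V(e)\subset A)\cdot \frac{6}{r^i}$. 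Since $K$ already uses $i$ vertices of $V(e)$, it uses $r-i$ vertices outside $V(e)$, so the number of sets $A$ (of size $r$, disjoint from $V(e)$) containing those $r-i$ vertices is at most $\binom{n}{i}\le n^i/i!\le n^i$; bounding $N_e \ge k_r/2$ gives $|\psi_e(K)|\le \frac{2}{k_r}\cdot n^i\cdot\frac{6}{r^i} = \frac{12 n^i}{r^i k_r}$. This is a factor $2$ off the claimed $6n^i/(r^ik_r)$, so the genuinely careful point — and the one place I expect to have to sharpen the estimate — is the bookkeeping here: one should count sets $A$ of size exactly $r$ in $V(G)\setminus V(e)$ containing a fixed $(r-i)$-set, which is $\binom{n-r-2+i}{i}\le\binom{n}{i}$, and combine this with the sharper bound $|\alpha_i|\le 2/r^i$-type estimates (e.g. $|\alpha_2| = 1/\binom r2$, $|\alpha_1|\le 2/(r-1)$, $|\alpha_0|=1$ for $k=2$, reading off from \eqref{eq:irow}) together with $\binom n i\le n^i/i!$ and $N_e\ge k_r/2$, to land inside $6n^i/(r^i k_r)$. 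Everything else — the two-stage averaging, the cancellation in (i), and the restriction to $\AA$ — is routine linear algebra once Definition~\ref{edgedist} is in hand; the constant-chasing in (ii) is the only real obstacle, and it is purely computational.
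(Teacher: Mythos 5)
Your construction is the same two-stage averaging the paper uses: restrict to $(r+2)$-cliques $J_A$ with $A\in\HH_e$ (all of whose $r$-subcliques lie in $\AA$), use the basic $k=2$ edge-gadget on each, and average; and your verification of (i) matches the paper's. For (ii) you correctly note that the crude bound from Proposition~\ref{gadget}(ii) is too lossy (in fact you miscompute it for $i=0$: $2^{2}\cdot 2!/\binom{r-2}{0}=8$, not $4$, so it fails to give $6$ there) and that one must instead read off the exact $k=2$ coefficients $\alpha_2=2/r(r-1)$, $\alpha_1=-(r-2)/r(r-1)$, $\alpha_0=(r-2)/r$, each bounded by $3/r^i$ for $r\geq 3$; combined with $\alpha_{e,K}\leq\binom{n}{i}\leq n^i$ and $|\HH_e|\geq k_r/2$ this gives exactly $6n^i/r^ik_r$, which is precisely how the paper closes the estimate.
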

\begin{proof} 
The proof idea is similar to that of Proposition~\ref{gadget}.
For each edge $e\in E(G)$, let $\HH_e$ be the set of sets $A\subset V(G)\setminus V(e)$ for which $|A|=r$ and, for each subset $B\subset V(e)\cup A$ with $|B|=r$, $G[B]\in \AA$.
As $\AA$ is well-distributed, $|\HH_e|\geq k_r/2$. For each clique $K\in \AA$, let $\a_{e,K}$ be the number of sets $A\in \HH_e$ for which $K \in \KK_r[A\cup V(e)]$. 
For each edge $e\in E(G)$ and clique $K\in \AA$, let
\[
\weights{\phi}{e}{K}:=\begin{cases}
\frac{2}{r(r-1)} & \text{ if }|V(K)\cap V(e)|=2,\\
-\frac{r-2}{r(r-1)} & \text{ if }|V(K)\cap V(e)|=1,\\
\frac{r-2}{r} & \text{ if }|V(K)\cap V(e)|=0,
\end{cases}
\]
and let $\weights{\psi}{e}{K}:=\a_{e,K}\weights{\phi}{e}{K}/|\HH_e|$. We will now show that $\psi_e$ satisfies the requirements of the lemma.\
\COMMENT{Note that this is just the solution of \eqref{eq:irow}.}

Firstly, let $e,f\in E(G)$, and $A\in\HH_e$ with $V(f)\subset A\cup V(e)$. If $f=e$, then, as $|A|=r$, there are $\binom{r}{2}$ $r$-cliques $K\in \KK_r[A\cup V(e)]$ with $f=e\in E(K)$. Thus
\[
\sum_{K \in \KK_r [A\cup V(e)]\colon f\in E(K)}\weights{\phi}{e}{K}=1.
\]
If $f$ and $e$ share precisely one vertex, then for each $i\in \{1,2\}$ there are $\binom{r-1}{i}$ $r$-cliques $K\in \KK_r[A\cup V(e)]$ with $f\in E(K)$ and $|V(K)\cap V(e)|=i$.  Thus
\[
\sum_{K \in \KK_r [A\cup V(e)]\colon f\in E(K)}\weights{\phi}{e}{K}=\binom{r-1}{2}\frac{2}{r(r-1)}-(r-1)\frac{r-2}{r(r-1)}=0.
\]
If $f$ and $e$ share no vertices, then for each $i\in \{0,1,2\}$ there are $\binom{2}{2-i}\binom{r-2}{i}$ cliques $K\in \KK_r[A\cup V(e)]$ with $f\in E(K)$ and $|V(K)\cap V(e)|=i$. Thus
\[
\sum_{K \in \KK_r[A\cup V(e)]\colon f\in E(K)}\weights{\phi}{e}{K}=\binom{r-2}{2}\frac{2}{r(r-1)}-2(r-2)\frac{r-2}{r(r-1)}+\frac{r-2}{r}=0.
\]
Therefore,
\begin{align*}
\sum_{K\in \AA\colon f\in E(K)}\weights{\psi}{e}{K} &=  \sum_{K\in \AA:f\in E(K)}  \frac{1}{|\HH_e|} \sum_{A\in\HH_e : K \in \KK_r[A\cup V(e)] }\weights{\phi}{e}{K}
\\
&=\frac{1}{|\HH_e|}\sum_{A\in \HH_e}\sum_{ K \in \KK_r[A\cup V(e)]:f\in E(K)}\weights{\phi}{e}{K}
=\frac{1}{|\HH_e|}\sum_{A\in \HH_e}\mathbf{1}_{\{e=f\}}=\mathbf{1}_{\{e=f\}},
\end{align*}
as required.

Secondly, fix an edge $e\in E(G)$ and a clique $K\in \AA$, and let $i:=|V(K)\cap V(e)|$. There are at most $\binom{n}{i}$ sets $A\in \HH_e$ for which $K \in \KK_r[A\cup V(e)]$, and thus $\a_{e,K}\leq n^{i}$. As mentioned previously, we have $|\HH_e|\geq k_r/2$, and we can observe that $|\weights{\phi}{e}{K}|\leq 3/r^i$. Therefore,
\[
|\weights{\psi}{e}{K}|\leq (2n^{i}/k_r)|\weights{\phi}{e}{K}|\leq 6n^{i}/r^{i}k_r.\qedhere
\]
\end{proof}

We will initially weight each clique with $1/\kappa$, where $\kappa:=k_{r-2}-2\delta nk_{r-3}$. 
As we will see later (in Lemma~\ref{breakdown}), this gives an almost fractional $K_r$-decomposition of $G$. 
Let $\pi:E(G)\to\R$ record the amount of weight we wish to add over each edge to achieve a fractional $K_r$-decomposition. 
We wish to know whether we can make these adjustments using edge-gadgets while keeping the weights on the $r$-cliques positive.
The next lemma, Lemma~\ref{smoothweight}, says that we can make these adjustments while changing the weight of each clique by no more than $1/2\kappa$, provided that the adjustments given by $\pi$ are on average quite small and $\pi$ is sufficiently `smooth'. 
That is, $|\pi|$ is not significantly above average for any edge, and the average of $|\pi|$ around each vertex is even more restricted.
Before we state Lemma~\ref{smoothweight}, we formalise these properties by the following definition.

\begin{definition}\label{smoothdefn} Given a graph~$G$ and $r \in \mathbb N$, a function $\pi:E(G)\to\R$ is \emph{$r$-smooth} if
\begin{enumerate}[label= \rm{(A\arabic{enumi})}]
\item for each edge $xy\in E(G)$, $|\pi(xy)|\leq 1/10^4$,\label{good1}
\item for each vertex $x\in V(G)$, $\sum_{y\in N(x)}|\pi(xy)|\leq n/10^4r$, and \label{good2}
\item $\sum_{xy\in E(G)}|\pi(xy)|\leq n^2/10^4r^2$.\label{good3}
\end{enumerate}
\end{definition}

Note that (A1) does not imply (A2), and (A2) does not imply (A3). 

The intuition behind the definition of smoothness is as follows.
To construct a basic edge-gadget $\phi_e$, we increased only the weight over $e$ by first increasing the weight of some $r$-cliques containing $e$, then making further adjustments to cancel out the change in weight over every other edge of these cliques.
These cancellations introduce an inherent inefficiency and mean that we can only hope to correct errors of average size $O(1/r^2)$ (cf.\ (A3)), although we can handle slightly larger localised errors (cf.\ (A1) and (A2)).


\begin{lemma}\label{smoothweight}  Let $r\geq 4$, $n\in \N$ and $0\leq\d\leq 1/24r$. Let~$G$ be a graph with~$n$ vertices and $\d(G)\geq (1-\d)n$.
Let $\kappa:=k_{r-2}-2\d n k_{r-3}$, and let $\pi:E(G)\to\R$ be $r$-smooth. 
Then there exists a function $\omega : \KK_r \to \R$ so that $|\omega(K)|\leq 1/2\kappa$ for all $K \in \KK _r$ and, for each $e\in E(G)$, 
\[
\sum_{K\in \KK_r\colon e\in E(K)} \omega(K) = \pi(e).
\]
\end{lemma}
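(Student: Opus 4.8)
\textbf{Proof proposal for Lemma~\ref{smoothweight}.}

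The plan is to decompose the target function $\pi$ into a bounded number of pieces, each of which is easy to realise by summing edge-gadgets whose error contribution to any single clique is controlled, and then add these up. The natural candidate for the edge-gadgets is the family from Lemma~\ref{addweightedge}, but that requires a well-distributed set $\AA\subset\KK_r$ whose cliques avoid the `bad' part of the graph. So the first step is to locate the set of edges where $\pi$ is large: for a threshold like $t=1/r^{1/2}$ (or some similar scale), let $X$ be the set of vertices incident to many edges $xy$ with $|\pi(xy)|$ large, and more generally identify the `heavy' structure. Using the smoothness hypotheses (A1)--(A3), one shows $|X|\le \delta r n$ (this is exactly the kind of counting for which (A2) and (A3) are designed), so that Proposition~\ref{KXsmall} applies and the set $\AA:=\{K\in\KK_r:|V(K)\cap X|<r^{1/2}\}$ satisfies $|\KK_r\setminus\AA|\le k_r/r^2$. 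A short counting argument then shows $\AA$ is well-distributed in the sense of Definition~\ref{edgedist}: for a fixed edge $e$, almost all choices of an $r$-set $A\subset V(G)\setminus V(e)$ yield $G[B]\in\AA$ for every $r$-subset $B\subset V(e)\cup A$, since the cliques that could land in the small set $\KK_r\setminus\AA$ are too few.

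With $\AA$ in hand, apply Lemma~\ref{addweightedge} to obtain, for each $e\in E(G)$, a function $\psi_e:\AA\to\R$ with $\sum_{K\in\AA:f\in E(K)}\psi_e(K)=\mathbf 1_{\{e=f\}}$ and $|\psi_e(K)|\le 6n^i/r^i k_r$ when $i=|V(K)\cap V(e)|$. Set $\omega:=\sum_{e\in E(G)}\pi(e)\psi_e$; then by construction the weight over each edge is exactly $\pi(e)$ as required. It remains to bound $|\omega(K)|$ for each $K\in\KK_r$. Writing $V(K)=\{v_1,\dots,v_r\}$, split the sum over $e$ according to $i=|V(e)\cap V(K)|\in\{0,1,2\}$: the contribution from $i=2$ involves the $\binom r2$ edges inside $K$ and is bounded using (A1); the contribution from $i=1$ involves edges at the $r$ vertices of $K$ and is bounded using (A2); the contribution from $i=0$ involves all remaining edges and is bounded using (A3). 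Each term carries the corresponding factor $6n^i/r^ik_r$ from Lemma~\ref{addweightedge}(ii), and there are at most $\binom r i\cdot(\text{number of }e\text{ of type }i)$ such edges, so the three bounds read roughly
\[
\binom r2\cdot\frac{1}{10^4}\cdot\frac{6}{k_r},\qquad r\cdot\frac{n}{10^4 r}\cdot\frac{6}{rk_r},\qquad \frac{n^2}{10^4 r^2}\cdot\frac{6n^0\cdot\text{(\dots)}}{r^0 k_r},
\]
and using $k_r\ge(1-\binom r2\delta)\binom nr$ (Proposition~\ref{global-and-local-cliques} with $k=2$, or Proposition~\ref{cliqnos}) together with $k_{r-2}\asymp (2r/n)^2 k_r$ one checks that the total is at most $1/2\kappa$ with room to spare, since $\kappa=k_{r-2}-2\delta n k_{r-3}\ge k_{r-2}/2$ and $k_{r-2}$ is of order $r^2 k_r/n^2$. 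The constants $10^4$ in the definition of smoothness are precisely what makes this final inequality hold.

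The main obstacle I expect is not any single estimate but getting the bookkeeping to close: one must verify simultaneously that (i) the heavy-vertex set $X$ is genuinely small (forcing the right choice of thresholds, balancing (A1) against (A2) against (A3)), (ii) $\AA$ really is well-distributed after deleting $\KK_r\setminus\AA$ (needs $r^2\gg$ the relevant loss, which is where $r\ge4$ and the $1/r^2$ bound in Proposition~\ref{KXsmall} enter), and (iii) the three-way split of $|\omega(K)|$ telescopes to $1/2\kappa$ with the stated constants. The term driving the calculation is the $i=1$ (vertex) term, since $\psi_e(K)$ can be as large as $6n/rk_r$ there and there are order-$n$ edges at each vertex of $K$; this is exactly the term that (A2) is calibrated to control, and it is the place where a weaker smoothness hypothesis would break the argument. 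A secondary subtlety is that $\pi$ may be negative, so one should either work with $|\pi|$ throughout in the bounds (which is fine, as the clique-weight bound is via the triangle inequality) or split $\pi=\pi^+-\pi^-$ at the outset.
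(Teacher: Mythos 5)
Your proposal has a genuine gap, and it is in the part you describe as the easy bookkeeping at the end. Let me be precise about where it fails.

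Fix a clique $K\in\AA$ and split $|\omega(K)|\le\sum_e|\pi(e)||\psi_e(K)|$ by $i=|V(e)\cap V(K)|$, exactly as you propose. Using (A1) pointwise, the $i=2$ contribution is at most
\[
\binom{r}{2}\cdot\frac{1}{10^4}\cdot\frac{6n^2}{r^2k_r}\approx\frac{3n^2}{10^4 k_r},
\]
and this is the dominant term (your display dropped the factors $n^i$ from Lemma~\ref{addweightedge}(ii), which led you to diagnose $i=1$ as the driver; in fact $i=2$ dominates $i=1$ by a factor of order $r$). But we need $|\omega(K)|\le 1/2\kappa$, and by Proposition~\ref{cliqnos} we have $\kappa\le k_{r-2}\le (2r/n)^2k_r$, so $1/2\kappa\ge n^2/8r^2k_r$. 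Comparing, we would need $3/10^4\le1/8r^2$, i.e.\ $r\lesssim 20$. The constants $10^4$ do not close the argument for general $r$: the $i=2$ term is off by a factor of order $r^2$, and the $i=1$ term (bounded via (A2)) is off by a factor of order $r$. This is the heart of the difficulty, not a side computation.

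The missing idea is that $\AA$ must be chosen so that cliques $K\in\AA$ satisfy the \emph{aggregate} bounds $\sum_{e'\in E(K)}|\pi(e')|=O(1/10^4)$ (a constant, not $\binom{r}{2}/10^4$) and $\sum_{e':V(e')\cap V(K)\ne\emptyset}|\pi(e')|=O(n/10^4r)$ (not $n/10^4$). The pointwise hypotheses (A1) and (A2) cannot deliver this for every $K$, but (A3) and (A2) do guarantee it for \emph{most} $K$ by a first-moment argument: the average clique sees a total $\pi$-mass comparable to $\gamma r^2 = r^2/10^4r^2$ on its own edges, so at most a small fraction exceed a constant multiple of this. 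The paper's $\AA$ is defined directly by these two sums. This also explains why your vertex-set $X$ cannot do the job: (A2) already bounds $\sum_{y\in N(x)}|\pi(xy)|$ uniformly at every vertex, so there is no meaningful set of ``heavy'' vertices whose removal would control $\sum_{e\in E(K)}|\pi(e)|$. A clique can have all $\binom r2$ of its internal edges carrying weight near the pointwise maximum $1/10^4$ without any of its vertices looking exceptional under (A2) (the contribution from within $K$ is only $\approx r/10^4$, far below the (A2) threshold $n/10^4r$ when $n\gg r^2$). Proposition~\ref{KXsmall} is the right kind of tool, but here it needs to be replaced by a Markov-type bound over cliques against the weighted measure $\pi$, not against a vertex set. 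Once $\AA$ is defined in terms of $\pi$-aggregates and shown well-distributed (which follows by bounding the number of $(r+2)$-cliques through a fixed edge that fail either aggregate condition, again by a first-moment argument), your remaining steps --- applying Lemma~\ref{addweightedge}, setting $\omega=\sum_e\pi(e)\psi_e$, and bounding $|\omega(K)|$ by the three-way split --- go through as you describe, with all three terms now landing at the common scale $\Theta(\gamma n^2/k_r)=\Theta(n^2/10^4r^2k_r)$.
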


\begin{proof}
Let $\gamma:=1/10^4r^2$, and let
\begin{align}
\label{AAdefn}
\AA:= \Big\{ K\in \KK_r:\sum_{e'\in E(K)}| \pi(e')| \leq 72r^2\gamma\;\text{ and }\sum_{e' \in E(G) :|V(K)\cap V(e')|\geq 1} |\pi(e')|\leq 48rn\gamma\Big\}.
\end{align}
We will show that $\AA$ is well-distributed, and then define $\omega$ using the edge-gadgets $\psi_e$ obtained by applying Lemma~\ref{addweightedge} with~$\AA$.

Note that, using Proposition~\ref{cliqnos}, 
\begin{equation}\label{kappaest}
k_{r-2}\geq \kappa\geq k_{r-2}-4\d r k_{r-2}\geq 5k_{r-2}/6 > 0.
\end{equation}
For each $e\in E(G)$, let 
\begin{equation}\label{Hedefn}
\HH_{e}:=\{A\subset V(G)\setminus V(e):|A|=r\;\text { and }\;G[A\cup V(e)]\in \KK_{r+2}\},
\end{equation}
and note that, using Lemma~\ref{cliqest}\ref{est1}, we have\COMMENT{Here is the place we use $\delta \le 1/24r$}
\begin{equation}\label{Hebound}
|\HH_e|=\kappa_{V(e)}^{(r+2)}\geq k_r-4\d(r+2)k_r\geq 3k_r/4.
\end{equation}
Let 
\[
\HH_{e,1}:=\Big\{A\in \HH_e:\sum_{e'\in E(G[A\cup V(e)])}|\pi(e')|\leq 72r^2 \gamma\Big\}.
\]
\begin{claim}\label{He1bound} For each $e\in E(G)$, $|\HH_e\setminus \HH_{e,1}|\leq k_r/8$.
\end{claim}
\begin{proof}[Proof of Claim~\ref{He1bound}]
For each $i\in \{0,1,2\}$, each edge $e'\in E(G)$ with $|V(e')\cap V(e)|=i$ is in at most $k_{r+i-2}$ of the graphs $G[A\cup V(e)]$, with $A\in \HH_e$. We therefore have that, using Proposition~\ref{cliqnos} and \ref{good1}--\ref{good3},
\begin{align*}
|\HH_{e}\setminus\HH_{e,1} |(72r^2\gamma)&\leq \sum_{A\in \HH_e}\sum_{e'\in E(G[A\cup V(e)])}|\pi(e')|
\leq \sum_{i=0}^2\sum_{e':|V(e)\cap V(e')|=i}k_{r+i-2}|\pi(e')|\\
&\leq n^2\gamma k_{r-2}+2rn\gamma k_{r-1}+r^2\gamma k_r
\leq 9r^2\gamma k_r,
\end{align*}
hence $|\HH_e\setminus \HH_{e,1}|\leq k_r/8$.
\end{proof}

Now let 
\[
\HH_{e,2}:=\Big\{A\in \HH_e:\sum_{e'\in E(G):|V(e')\cap (A\cup V(e))|\geq 1}|\pi(e')|\leq 48r n \gamma\Big\}.
\]
\begin{claim}\label{He2bound} For each $e\in E(G)$, $|\HH_e\setminus \HH_{e,2}|\leq k_r/8$.
\end{claim}
\begin{proof}[Proof of Claim~\ref{He2bound}]
Let $e'\in E(G)$.
If $V(e)\cap V(e')=\emptyset$, then there are at most $2k_{r-1}$ sets $A\in \HH_e$ for which $|V(e')\cap (A\cup V(e))|\geq 1$. 
If $|V(e)\cap V(e')|\geq 1$, then $|V(e')\cap (A\cup V(e))|\geq 1$ for every $A\in \HH_e$, and $|\HH_e|\leq k_r$.
We therefore have that, using Proposition~\ref{cliqnos}, \ref{good2} and \ref{good3},
\begin{align*}
|\HH_{e}\setminus\HH_{e,2} |(48rn\gamma)&\leq \sum_{A\in \HH_{e}}\sum_{e':|V(e')\cap (A\cup V(e))|\geq 1}|\pi(e')|\\
&\leq \sum_{e' \in E(G):|V(e)\cap V(e')|=0}2k_{r-1}|\pi(e')|+\sum_{e'\in E(G):|V(e)\cap V(e')|\geq 1}k_{r}|\pi(e')|
\\
&\leq 2n^2\gamma k_{r-1}+2rn\gamma k_r
\leq 6rn\gamma k_r,
\end{align*}
hence $|\HH_{e}\setminus\HH_{e,2}|\leq k_r/8$.
\end{proof}

For each $e\in E(G)$, let $\bar{\HH}_e:=\HH_{e,1}\cap\HH_{e,2}$, so that 
by~\eqref{Hebound} and Claims~\ref{He1bound} and~\ref{He2bound}, we have $|\bar{\HH}_e|\geq 3k_r /4-k_r/4\geq k_r/2$.
We can now check that the set $\AA$ defined by~\eqref{AAdefn} is well-distributed.

For each $A\in \bar{\HH}_e$ and every $r$-clique $K \in \KK_r[A\cup V(e)]$ we have from the definition of $\HH_{e,1}$ and $\HH_{e,2}$ that $K\in \AA$. 
Since $|\bar\HH_e|\geq k_r/2$ for each edge $e\in V(G)$, this implies that $\AA$ is well-distributed.
Thus by Lemma~\ref{addweightedge}, for each $e\in E(G)$, there exists a function $\psi_e:\AA\to \R$ so that the following holds.
\begin{enumerate}[label = (\alph{enumi})]
\item If $e'\in E(G)$, then $\sum_{K\in \AA\colon e'\in E(K)}\weights{\psi}{e}{K}=\mathbf{1}_{\{e'=e\}}$. \label{aaa}
\item For each $K\in \AA$, if $i=|V(K)\cap V(e)|$, then $|\weights{\psi}{e}{K}|\leq 6n^{i}/r^{i}k_r$. \label{bbb}
\end{enumerate}
Now, for each $K\in \AA$, let
\begin{equation}\label{wKdefn}
\omega(K):=\sum_{e\in E(G)}\weights{\psi}{e}{K}\pi(e),
\end{equation}
and for each $K\in \KK_r\setminus \AA$, let $\omega(K):=0$. Then, for each $e\in E(G)$,
\[
\sum_{K\in \KK_r:e\in E(K)}\omega(K) = \sum_{e'\in E(G)}\sum_{K\in \AA:e\in E(K)}\weights{\psi}{e'}{K} \pi(e') \stackrel{\rm{(a)}}{=} \sum_{e'\in E(G)}\mathbf{1}_{\{e'=e\}}\pi(e')=\pi(e),
\]
as required.
Moreover, \ref{bbb}, (\ref{wKdefn}), (\ref{AAdefn}), \ref{good3},~\eqref{kappaest} and Proposition~\ref{cliqnos} together imply that, for each $K\in \AA$,
\begin{align*}
|\omega(K)|&\leq \sum_{e\in E(K)}6|\pi(e)|n^2/r^2k_r+\sum_{e\in E(G):|V(K)\cap V(e)|=1} 6|\pi(e)|n/rk_r+\sum_{e\in E(G)}6|\pi(e)|/k_r
\\
&\leq 6(72r^2 \gamma)n^2/r^2k_r+6(48rn \gamma)n/rk_r+6(n^2\gamma)/k_r
\\
&\leq 1000n^2\gamma/k_r=n^2/10r^2k_r\leq 2/5k_{r-2}\leq 1/2\kappa. \qedhere
\end{align*}
\end{proof}


\section{Fractional $K_r$-decompositions when $\d(G)\geq (1-1/10^5r^2)n$.} \label{escape-route}

The aim of this section is to prove Theorem~\ref{fracdecomp} under the stronger assumption that $\delta(G) \ge (1-\delta)n$ with $\delta:=1/10^5r^2$ (see Theorem~\ref{r2lemma} below).
We include a proof of this intermediate bound as it follows easily from Lemma~\ref{smoothweight}, and shows how we will make use of that lemma. 

As noted in Section~\ref{secweight}, after initially weighting the $r$-cliques uniformly with value $1/\kappa$ (where $\kappa:=k_{r-2}-2\delta nk_{r-3}$), Lemma~\ref{smoothweight} permits us to move a $\Omega(1/r^2)$ proportion of the weight over the edges around (subject to certain constraints) without making any of the $r$-clique weights negative. 
We will see, using Lemma~\ref{cliqest}, that we only need to adjust a $O(\d r)$ proportion of the weight over each edge to turn our initial uniform weighting into a fractional $K_r$-decomposition. 
Thus in the case when $\delta = O(1/r^3)$ is suitably small we can apply Lemma~\ref{smoothweight} to make this adjustment.
This corresponds to the argument presented in Section~\ref{sec:hyper}.

However, if we carry out some initial preprocessing (removing $r$-cliques until the minimum degree condition would be violated by any further removal) we can reduce the overall proportion of weight over the edges that we might need to move to $O(\d^2r^2)$. 
This allows us to use Lemma~\ref{smoothweight} even in the case when $\delta = O(1/r^2)$ is sufficiently small.

\begin{theorem}\label{r2lemma}
Let $r\geq 4$ and let $G$ be a graph with $n\geq 10^6 r^4$ vertices and $\d(G)\geq (1-1/10^5r^{2})n$. Then~$G$ has a fractional $K_r$-decomposition.
\end{theorem}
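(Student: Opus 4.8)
The plan is to combine the preprocessing idea with Lemma~\ref{smoothweight}. Set $\delta := 1/10^5 r^2$, so the hypotheses of Lemma~\ref{smoothweight} (in particular $\delta \le 1/24r$) and of Lemma~\ref{cliqest} and Proposition~\ref{cliqnos} are all satisfied. First I would perform the preprocessing: greedily delete copies of $K_r$ from $G$ one at a time, each deletion removing one edge from each pair it covers, continuing as long as the resulting multigraph still has every edge-multiplicity positive and the minimum degree of the underlying graph stays at least, say, $(1-2\delta)n$. More precisely, think of this as assigning to each removed clique weight $1$; after the process stops we have a nonnegative integer weighting $\omega_0:\KK_r\to\Z_{\ge0}$, and the \emph{residual} edge-weighting $\rho(e) := 1 - \sum_{K\ni e}\omega_0(K)$ is what remains to be covered fractionally. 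The point of stopping at the threshold $(1-2\delta)n$ is that when the process terminates, \emph{every} remaining edge lies in very few cliques all of whose edges still have positive residual weight — otherwise we could remove one more. Quantitatively, for each edge $e$ the residual $\rho(e)$ must be small on average and, crucially, the total residual weight $\sum_{e}\rho(e)$ should be $O(\delta^2 r^2 n^2)$ rather than the naive $O(\delta r n^2)$, because most edges end up with residual weight $0$ or close to it.

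Next I would set up the final weighting. Put $\kappa := k_{r-2} - 2\delta n k_{r-3}$ as in Lemma~\ref{smoothweight}, and start from the uniform weighting $\omega_1(K) := 1/\kappa$ for all $K\in\KK_r$. By Proposition~\ref{cliqnos} and Lemma~\ref{cliqest}\ref{est1}, for each edge $e$ the weight over $e$ under $\omega_1$ is $\kappa_e^{(r)}/\kappa$, which differs from $1$ by $O(\delta r)$; combined with the residual structure above, I need the \emph{correction function}
\[
\pi(e) := \rho(e) - \frac{\kappa_e^{(r)}}{\kappa}
\]
to be $r$-smooth in the sense of Definition~\ref{smoothdefn}. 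Verifying (A1), (A2), (A3) is the technical heart: (A1) asks $|\pi(e)|\le 1/10^4$ edgewise, which follows from the two contributions each being $O(\delta r) = O(1/10^5 r)$; (A2) asks the sum around each vertex to be $O(n/r)$, which needs the min-degree-threshold stopping rule to control $\sum_{y\in N(x)}\rho(xy)$; and (A3) asks $\sum_e |\pi(e)| = O(n^2/r^2)$, for which the uniform-weighting part contributes $O(\delta r n^2) = O(n^2/10^5 r)$ and the residual part contributes $O(\delta^2 r^2 n^2) = O(n^2/10^{10}r^2)$ — this last bound is exactly where the preprocessing pays off, turning an $O(1/r^3)$-type requirement into an $O(1/r^2)$ one.

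Once $\pi$ is shown to be $r$-smooth, apply Lemma~\ref{smoothweight} to obtain $\omega_2:\KK_r\to\R$ with $|\omega_2(K)|\le 1/2\kappa$ for all $K$ and $\sum_{K\ni e}\omega_2(K) = \pi(e)$ for every edge $e$. Then define the final weighting
\[
\omega(K) := \omega_0(K) + \frac{1}{\kappa} + \omega_2(K).
\]
For every edge $e$, the weight over $e$ equals $(1-\rho(e)) + \kappa_e^{(r)}/\kappa + \pi(e) = 1$, so $\omega$ is a fractional $K_r$-decomposition provided it is nonnegative. Nonnegativity holds because $\omega_0(K)\ge 0$ and $1/\kappa + \omega_2(K) \ge 1/\kappa - 1/2\kappa = 1/2\kappa > 0$ by~\eqref{kappaest}. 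This completes the proof modulo the smoothness verification.

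I expect the main obstacle to be establishing property (A3), i.e.\ bounding $\sum_e \rho(e) = O(\delta^2 r^2 n^2)$. This requires a careful analysis of why the greedy removal process cannot stop too early: one must argue that at termination, for a typical edge $e$ with $\rho(e)$ bounded away from $0$, either the minimum degree has genuinely dropped to the $(1-2\delta)n$ threshold (and only $O(\delta n)$ vertices can be near that threshold, each incident to $O(n)$ edges, giving $O(\delta n^2)$ such edges — but I need a factor-$\delta r$ stronger bound) or $e$ lies in no removable clique, which via Lemma~\ref{cliqest} forces many of the $\Theta(k_{r-2})$ cliques through $e$ to contain an already-saturated edge, and there cannot be too many saturated edges in total. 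Balancing these two mechanisms to extract the $\delta^2 r^2$ rather than $\delta r$ saving is the delicate step; everything else is bookkeeping with Proposition~\ref{cliqnos} and Lemma~\ref{cliqest}.
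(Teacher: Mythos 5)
Your high-level plan (greedy preprocessing, uniform weighting $1/\kappa$, correction via $r$-smoothness and Lemma~\ref{smoothweight}) is the right shape, but the quantity you are trying to bound and the mechanism you propose for bounding it are both wrong.

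The preprocessing you describe terminates with a residual graph $H\subseteq G$ that still satisfies $\delta(H)\geq (1-\delta)n$, and therefore $e(H)\geq (1-\delta)n^2/2$. Since $\rho(e)=1$ for every surviving edge and $\rho(e)=0$ for every removed edge, one has $\sum_e\rho(e)=e(H)=\Theta(n^2)$. Your central quantitative claim --- that $\sum_e\rho(e)=O(\delta^2 r^2 n^2)$ --- is therefore off by a factor of order $1/\delta^2 r^2$, and the rest of the argument you sketch to establish it (balancing saturated edges against near-threshold vertices) cannot possibly succeed. The thing that actually becomes small after preprocessing is not the residual mass but the \emph{deviation} $\sum_e|\kappa_e^{(r)}-\kappa|$: in the correct bookkeeping, one sets $\pi(e):=\kappa_e^{(r)}-\kappa$ (all quantities in the residual graph), observes that each surviving edge receives weight $\kappa_e^{(r)}/\kappa=1+\pi(e)/\kappa$ under the uniform weighting, and shows $\pi/\kappa$ is $r$-smooth.

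The key idea your proposal is missing is how the preprocessing controls $\sum_e|\pi(e)|$. The relevant structural consequence is that $X:=\{x:d(x)\geq(1-\delta)n+r-1\}$ induces a $K_r$-free graph (otherwise one more clique could be removed without violating the degree bound), and since every $x\in X$ has at most $\delta n$ non-neighbours, $\delta(G[X])\geq |X|-\delta n$; Tur\'an's theorem then forces $|X|\leq\delta(r-1)n$. For $x,y\notin X$ the term $2\delta n - |N^c(x)\cup N^c(y)|$ in Lemma~\ref{cliqest}\ref{est2} collapses to $O(r+|N^c(x)\cap N^c(y)|)$, while the at most $\delta(r-1)n$ vertices of $X$ contribute an error of order $\delta^2 r n^3 k_{r-3}$; summing gives $\sum_e|\pi(e)|=O(\delta^2 r^2 n^2 k_{r-2})$, which is exactly what is needed for (A3) with $\delta\sim 1/r^2$. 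This Tur\'an step is the crux of the argument and is absent from your write-up; without it, there is no route to (A3).

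Two smaller points. First, your $\pi(e)=\rho(e)-\kappa_e^{(r)}/\kappa$ is approximately $-1$ on removed edges unless you are careful to take $\kappa_e^{(r)}$ in the residual graph (where it vanishes on removed edges); the statement as written applies Lemma~\ref{smoothweight} to all of $G$ and so would fail (A1). Second, you do not need the $(1-2\delta)n$ threshold; removing until no further removal preserves $\delta\geq(1-\delta)n$ is what makes the Tur\'an argument work with $|X|\leq\delta(r-1)n$.
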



\begin{proof}
Let $\d:=1/10^5r^{2}$. 
We may assume that we cannot remove any $r$-cliques from~$G$ while maintaining minimum degree at least $(1-\d)n$. 
Indeed, by removing a sequence of $r$-cliques from~$G$ we can find a subgraph~$H$ for which $\d(H)\geq (1-\d)n$ but for which removing any $r$-clique violates this minimum degree condition; if~$H$ has a fractional $K_r$-decomposition, then clearly~$G$ does also. 
Therefore, writing $X:=\{x\in V(G):d(x)\geq (1-\d)n+r-1\}$, we may assume that $G[X]$ is $K_r$-free. 
As each $v \in X$ has at most $\delta n$ non-neighbours in~$G$,
$\delta(G[X]) \geq |X| - \delta n = (1 - \delta n / |X|) |X|$,
so by Tur\'an's theorem $\delta n / |X| \geq 1/(r-1)$, i.e. $|X|\leq \d (r-1) n$.

For each edge $e\in E(G)$, we have, by Lemma~\ref{cliqest}\ref{est2}, that
\begin{equation}\label{sunny}
\big|\kappa^{(r)}_{e}-k_{r-2}+|N^c(x)\cup N^c(y)|k_{r-3} \big|\leq 24(\d r)^2k_{r-2}.
\end{equation}
Let $\kappa:=k_{r-2}-2\d n k_{r-3}$, so that, by Proposition~\ref{cliqnos}, $ \kappa \geq (1-4\d r)k_{r-2}\geq 9k_{r-2}/10$.
For each $e\in E(G)$, let $\pi(e):=\kappa^{(r)}_e-\kappa$, so that, by \eqref{sunny}, we have
\begin{align}
|\pi(e)|& \leq (2\d n-|N^c(x)\cup N^c(y)|)k_{r-3}+24(\d r)^2k_{r-2}. \label{chips}
\end{align}
In particular, together with Proposition~\ref{cliqnos} this implies that
\[
|\pi(e)| \leq  2\d nk_{r-3}+24(\d r)^2k_{r-2}
\leq (4\d r+24(\d r)^2) k_{r-2}\leq 9k_{r-2}/10^5r\leq \kappa/10^4r.
\]
For each $x\in V(G)$, then, $\sum_{y\in N(x)}|\pi(xy)/\kappa|\leq n/10^4r$.
Furthermore, using (\ref{chips}), Proposition~\ref{cliqnos}, and our assumption that $n\geq 10^6r^4$,
\begin{align*}
2\sum_{e \in E(G)}&|\pi(e)| =  \sum_{x\in V(G)}\sum_{y\in N(x)}|\pi(xy)|\hspace{8.5cm}\ 
\end{align*}
\vspace{-0.35cm}
\begin{align*}
\text{\ \;\;\;\;\;\;\ }&\leq \sum_{x\in V(G)}\sum_{y\in N(x)}(2\d n-|N^c(x)\cup N^c(y)|)k_{r-3}+24n^2(\d r)^2k_{r-2}
\\
&\leq 2\sum_{x\in X}\sum_{y\in N(x)}2\d nk_{r-3}+\sum_{x\notin X}\sum_{y\in N(x)\setminus X}(2\d n-|N^c(x)\cup N^c(y)|)k_{r-3}+24\d^2r^2n^2k_{r-2}
\\
&\leq 4\d^2r n^3k_{r-3}+\sum_{x\notin X}\sum_{y\in N(x)\setminus X}(2r+|N^c(x)\cap N^c(y)|)k_{r-3}+24\d^2r^2n^2k_{r-2}
\\
&\leq 8\d^2r^2 n^2k_{r-2}+2rn^2k_{r-3}+\sum_{x\notin X}\sum_{z\in N^c(x)}|N^c(z)|k_{r-3}+24\d^2r^2n^2k_{r-2}
\\
&\leq 32\d^2r^2 n^2k_{r-2}+4r^2nk_{r-2}+\d^2 n^3k_{r-3}
\\
&\leq n^2k_{r-2}/10^5r^2+ n^2k_{r-2}/10^5r^2+2\d^2rn^2k_{r-2}
\leq 9n^2k_{r-2}/10^5r^2\leq n^2\kappa/10^4r^2.
\end{align*}
Therefore, the function $\pi/\kappa:E(G)\to\R$ is $r$-smooth. Thus Lemma~\ref{smoothweight} implies that there is a function $\omega':\KK_r \to \R$ so that, for each $e\in E(G)$, $\sum_{K\in \KK_r:e\in E(K)}\omega'(K) = \pi(e)/\kappa$, and, for each $K\in \KK_r$, $|\omega'(K)|\leq 1/2\kappa$.

Define $\omega:\KK_r \to \R$ by setting $\omega(K):=1/\kappa -\omega'(K)$ for each $K \in \KK_r$. 
Then, for each $e\in E(G)$,
\[
\sum_{K\in \KK_r:e\in E(K)}  \omega(K) = \frac{\kappa_e^{(r)}-\pi(e)}{\kappa}=1,
\]
and, for each $K\in \KK_r$, $ \omega(K) \geq 1/\kappa -1/2\kappa \geq 0$.
Therefore, $\omega$ is a fractional $K_r$-decomposition of~$G$.
\end{proof}


\section{Adding weight around a vertex} \label{sec:vertexgadget}

After our initial preprocessing of the graph $G$ and the initial weighting of the $r$-cliques with $1/\kappa$, where $\kappa:=k_{r-2}-2\delta nk_{r-3}$, we may need to add/subtract on average a $\Omega(\delta^2r^2)$ proportion of the weight over each edge. Our edge-gadgets can only add/subtract weight over each edge if it is on average $O(1/r^2)$. 
Thus the techniques in Section~\ref{escape-route} require $\delta=O(1/r^2)$.

In order to increase the size of $\delta$, in this section we introduce `vertex-gadgets', defined explicitly below, which in this set-up are capable of adding/subtracting $\Omega(1/r)$ of the weight over each edge. However, while this is more efficient than using edge-gadgets, the vertex-gadgets can only change the weight of every edge around some vertex simultaneously by the same amount.

For a vertex $x \in V(G)$, a \emph{vertex-gadget} is a function $\xi_x : \KK_r \to \R$  such that for each edge $e \in E(G)$,
\[
\sum_{K\in \KK_r \colon e\in E(K)}\weights{\xi}{x}{K}=\begin{cases} 1 & \text{if }x\in V(e),\\ 0 & \text{if }x\notin V(e). \end{cases}
\]

In the next lemma, we show that, for each vertex $x \in V(G)$, there exists a function $\phi_x : \KK_r \to \R$ such that
\begin{enumerate}[label = \rm{(\roman{enumi})}]
\item for each $e \in E(K)$ with $x \notin V(e)$, $\sum_{K\in \KK_r \colon e\in E(K)}\weights{\phi}{x}{K} = 0$, and
\item for each $y \in N(x)$, $\sum_{K\in \KK_r \colon xy\in E(K)}\weights{\phi}{x}{K}$ is close to $1$.
\end{enumerate}
Thus $\phi_x$ is almost a vertex-gadget.
We will then use edge-gadgets to make the requisite corrections to $\phi_x$ to obtain an actual vertex-gadget---see Lemma~\ref{addweightvertex2}. 
(Thus we actually define $\phi_x$ on a certain subset $\AA \subseteq \KK_r$ instead of $\KK_r$ so that we can make these adjustments efficiently.)

\begin{lemma}\label{addweightvertex} Let $r\geq 4$, $0 < \d \leq 1/600r^{3/2}$ and $n\geq 32r^3$. 
Let $G$ be a graph on $n$ vertices with $\d(G)\geq (1-\d)n$.
Let $X:=\{x \in V(G):d_G(x)\geq (1-\d)n+r-1\}$ and suppose that $|X|\leq \d (r-1) n$.
Let $\AA:=\{K\in \KK_r:|V(K)\cap X|\leq r^{1/2}+2\}$.
Then, for each vertex $x \in V(G)$, there exists a function $\phi_x: \AA \to \mathbb{R}$ for which the following holds, where, for each $y\in N(x)$, we let $\tau_{x,y}:=1-\sum_{K\in \AA\colon xy\in E(K)}\weights{\phi}{x}{K}$.
\begin{enumerate}[label= \rm(B\arabic{enumi})]
\item If $x\in V(G)$ and $e\in E(G)$ with $x\notin V(e)$, then $\sum_{K\in \AA\colon e\in E(K)}\weights{\phi}{x}{K}=0$.\label{bad1}

\item For all $x\in V(G)$ and $y\in N(x)$, $|\tau_{x,y}|\leq 1/r^{1/2}$.\label{bad2}

\item For each $x\in V(G)$, $\sum_{y\in N(x)}|\tau_{x,y}|\leq n/r$.\label{bad3}

\item For all $K\in \AA$ and $x\in V(G)$, if $i=|V(K)\cap \{x\}|$, then\label{bad4}
$
|\weights{\phi}{x}{K}|\leq 2 n^{i+1}/r^{i+1}k_r$.
\end{enumerate}
\end{lemma}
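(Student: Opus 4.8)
\textbf{Proof plan for Lemma~\ref{addweightvertex}.}
The plan is to build $\phi_x$ by averaging a basic vertex-gadget over many $(r+2)$-cliques that contain $x$, restricted to cliques of $\AA$ (those meeting $X$ in few vertices), in direct analogy with the edge-gadget construction of Lemma~\ref{addweightedge}. First I would fix a vertex $x$ and consider, for each $r$-set $A \subset V(G)\setminus\{x\}$ with $A\cup\{x\}$ spanning an $(r+1)$-clique, the weighting on $\KK_r[A\cup\{x\}]$ that assigns a common value $\beta_1$ to every $r$-clique using $x$ and a common value $\beta_0$ to every $r$-clique avoiding $x$. The two linear conditions ``weight over an edge at $x$ equals $1$'' and ``weight over an edge avoiding $x$ equals $0$'' form a $2\times 2$ system (the relevant coefficients count extensions of an edge meeting $x$ in $1$ resp.\ $0$ vertices to an $r$-clique meeting $x$ in $1$ resp.\ $0$ vertices inside $A\cup\{x\}$), which is triangular with nonzero diagonal and so solvable; one finds $\beta_1 = 1/\binom{r}{1}=1/r$ up to sign conventions and $\beta_0$ of order $1/r^2$ (this is the $k=2$, ``vertex'' analogue of \eqref{eq:irow}, and can be written down explicitly). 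This already makes the weight over every edge avoiding $x$ vanish \emph{exactly}, so (B1) will hold on the nose; the averaging over the valid sets $A$ preserves this.

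Next I would control which $r$-cliques actually get nonzero weight. Using an $(r+1)$-clique through $x$ is not quite enough room to cancel weight over the edges at $x$, but the construction above leaves the weight over each edge $xy$ at some value $1-\tau_{x,y}$ rather than exactly $1$; the deficit $\tau_{x,y}$ measures how many of the averaged sets $A$ fail to contain $y$. Concretely, after defining $\phi_x$ as the average of the basic weightings over all suitable $A$ (those for which \emph{every} $r$-subset of $A\cup\{x\}$ lies in $\AA$), the value $\sum_{K\in\AA: xy\in E(K)}\phi_x(K)$ picks up a contribution from each such $A$ with $y\in A$, and is $1$ exactly when every valid $A$ contains $y$; in general $\tau_{x,y}$ is the normalised count of valid $A$ missing $y$, plus the tiny error from sets $A$ that contain $y$ but some $r$-subset of which is excluded from $\AA$. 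To bound $|\tau_{x,y}|$ I would show the number of valid $A$ is at least (say) $(1-O(\delta r))k_r$ using Lemma~\ref{cliqest}\ref{est1} together with Proposition~\ref{KXsmall} (and its $r-1$ analogue) to discard the $A\cup\{x\}$ whose $r$-subsets meet $X$ too much, and that the number missing a fixed $y$ is at most $k_{r-1}+ (\text{non-neighbours of }y)\cdot k_{r-2}$-type terms, which is $O(n/r)\cdot k_r$ per $y$ individually of size $O(\delta r)k_r \le k_r/r^{1/2}$, giving (B2), and summing over $y\in N(x)$ and using $\delta \le 1/600r^{3/2}$, $n \ge 32r^3$ gives $\sum_y|\tau_{x,y}| = O(\delta r n)\cdot\text{(stuff)} \le n/r$, which is (B3). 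The coefficient bound (B4) follows as in Lemma~\ref{addweightedge}(ii): $\phi_x(K) = \alpha_{x,K}\cdot(\text{one of }\beta_0,\beta_1)/|\text{valid }A|$, where $\alpha_{x,K}\le n^{i}$ bounds the number of valid $A$ whose $(r+2)$-clique (or $(r+1)$-clique) with $x$ contains $K$, $|\beta_i|\le 3/r^{i+1}$-ish, and $|\text{valid }A|\ge k_r/2$, so $|\phi_x(K)| \le (2n^{i}/k_r)(1/r^{i+1}) \cdot O(1) \le 2n^{i+1}/r^{i+1}k_r$ after the bookkeeping (the extra factor $n/r$ versus the edge case reflects that we are spreading over edges at a vertex, of which there are $\sim n$, rather than a single edge).

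The main obstacle I expect is the same one the paper flags: ensuring $\AA$ (only cliques meeting $X$ in at most $r^{1/2}+2$ vertices) still contains enough of the averaging structure. One must check that for \emph{every} vertex $x$ — including $x\in X$ — the set of $r$-sets $A$ such that $A\cup\{x\}$ is an $(r+1)$-clique \emph{and all $r$-subsets of $A\cup\{x\}$ avoid $X$ heavily} is at least $k_r/2$, so the normalisation is safe and $\tau_{x,y}$ is small; the $+2$ slack in the definition of $\AA$ is exactly what absorbs the up-to-two vertices $x,y$ that may lie in $X$. The verification that the excluded family has size $O(k_r/r^2)$ is precisely Proposition~\ref{KXsmall} applied with $r$ and with $r-1$ (hence the hypothesis $r\ge 3$ there), and the rest is the routine inclusion–exclusion / Proposition~\ref{cliqnos} estimates for counting $A$ through or avoiding a fixed $y$. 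No use of max-flow min-cut is needed; everything is the averaging argument from Section~\ref{sketch} adapted from edges to vertices.
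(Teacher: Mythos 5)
Your high-level plan — average a basic vertex-gadget supported on $(r+1)$-cliques through $x$, restrict the averaging family via the $X$-avoidance condition and Proposition~\ref{KXsmall}, and read off $\tau_{x,y}$ from the resulting error — is the same skeleton as the paper. But there is a genuine gap in the normalisation that breaks (B2) and (B3), and a computational slip in the gadget coefficients that reinforces the confusion.

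\emph{The normalisation is wrong.} You define $\phi_x$ as the \emph{average} of the basic weightings over all valid $r$-sets $A$, i.e.\ you divide by $|\HH_x|$, the number of valid $A$, which is roughly $k_r$ (since the $A\cup\{x\}$ range over almost all $(r+1)$-cliques through $x$). This works for the edge-gadget in Lemma~\ref{addweightedge} because \emph{every} $A\in\HH_e$ contributes exactly $1$ to the weight over $e$, so the average is $1$. For the vertex-gadget the situation is asymmetric: the basic weighting from a set $A$ contributes $1$ to the weight over $xy$ \emph{only if} $y\in A$, and the $A\in\HH_x$ containing a fixed $y$ form only a tiny $\approx k_{r-1}/k_r \approx r/n$ fraction of $\HH_x$. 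So with your normalisation the weight over $xy$ is $\approx r/n$, not close to $1$, and $\tau_{x,y}\approx 1$, far exceeding the required $1/r^{1/2}$. The paper instead normalises by a \emph{chosen constant} $w_x := k_{r-1}-(n-d(x)+\delta n)k_{r-2}$, which is of order $k_{r-1}$ (a factor $\sim r/n$ smaller than $|\HH_x|$). The number $w_{x,y}$ of $A\in\HH_x$ containing $y$ is also $\approx k_{r-1}$, and the whole content of Claim~\ref{RRR1} is that $w_{x,y}$ and $w_x$ differ by only $O(\delta n\,k_{r-2})=O(\delta r)k_{r-1}$ (with still finer cancellation after summing over $y$, which is what drives (B3)), so $\tau_{x,y}=(w_x-w_{x,y})/w_x$ is genuinely small. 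Simply rescaling your averaged gadget by $n/r$ does not repair this: you would need $w_{x,y}/|\HH_x|$ to equal $r/n$ up to a $1\pm O(1/r^{1/2})$ factor, and there is no reason the ratio $k_{r-1}/k_r$ is that close to $r/n$ in an arbitrary graph satisfying the hypothesis; the paper's $w_x$ is tuned (and compared to $\kappa_{xy}^{(r+1)}$ via Lemma~\ref{cliqest}\ref{est2}) precisely so that this comparison does hold.

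\emph{The gadget coefficients are off.} Solving the $2\times 2$ system on an $(r+1)$-clique gives $\beta_1 = 1/(r-1)$ and $\beta_0 = -(r-2)/(r-1)$, so $|\beta_0|\approx 1$, not $O(1/r^2)$ as you claim (the coefficient magnitude \emph{increases} as the intersection with $\{x\}$ \emph{decreases}, just as in Proposition~\ref{gadget}). This mistake propagates to your (B4) bookkeeping, where the stated bound $|\beta_i|\leq 3/r^{i+1}$ should be $|\beta_i|\leq 4/(3r^{i})$. With the correct $|\beta_i|$ and the correct normalisation $w_x\approx k_{r-1}$ (not $k_r$), the arithmetic $n^i\cdot|\beta_i|/w_x \leq n^i\cdot(4/3r^i)\cdot(8/7k_{r-1})\leq 2n^{i+1}/r^{i+1}k_r$ closes (B4), but both the magnitude of $\beta_0$ and the choice of normaliser must be fixed first.
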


\begin{proof} 
For each vertex $x\in V(G)$, let $\HH_x$ be the set of sets $A\subset V(G)\setminus \{x\}$ for which $|A|=r$, $G[A\cup \{x\}]\in\KK_{r+1}$ and $|A\cap X|\leq r^{1/2} + 1$. For each $x\in V(G)$ and $K\in \KK_{r}$, let $\a_{x,K}$ be the number of sets $A\in \HH_x$ for which $K\in \KK_r[A\cup \{x\}]$,
and let
\[
\weights{\psi}{x}{K}:=
\begin{cases}
\frac{1}{r-1} & \text{ if }x\in V(K),\\
-\frac{r-2}{r-1} & \text{ if }x\notin V(K).
\end{cases}
\]
For each $x\in V(G)$, let $w_x:=k_{r-1}-(n-d(x)+\d n)k_{r-2}$. Note that, by Proposition~\ref{cliqnos},
\begin{equation}
w_x\geq k_{r-1}-2\d n k_{r-2} \geq k_{r-1}-4\d r k_{r-1} \geq 7k_{r-1}/8.\label{WWW}
\end{equation}
For each $K\in \AA$, let $\weights{\phi}{x}{K}:=\a_{x,K}\weights{\psi}{x}{K}/w_x$. 
We will now show that $\phi_x$ satisfies the requirements of the lemma.

First, let $x\in V(G)$ and let $e\in E(G)$ with $x\notin V(e)$. 
If $A\in\HH_x$ and $V(e)\subset A\cup \{x\}$, then, for $i\in \{0,1\}$, there are $\binom{r-2}{i}$ cliques $K\in \KK_r$ with $K\in \KK_r[A\cup \{x\}]$, $e\in E(K)$ and $|V(K)\cap \{x\}|=i$. 
Thus,
\[
\sum_{K\in \KK_r [A\cup \{x\}]\colon e\in E(K)}\weights{\psi}{x}{K}=(r-2)\frac{1}{r-1}-\frac{r-2}{r-1}=0.
\]
Therefore if $x\in V(G)$ and $e\in E(G)$ with $x\notin V(e)$, we have
\begin{align*}
\sum_{K\in \AA\colon e\in E(K)}\weights{\phi}{x}{K}&=
\sum_{K\in \AA\colon e\in E(K)} \frac{1}{w_x} \sum_{A\in \HH_x\colon K\in \KK_r [A\cup\{x\}]}\weights{\psi}{x}{K}
\\
&=\frac{1}{w_x}\sum_{A\in \HH_x}\sum_{K \in \KK_r[A\cup \{x\}]\colon e\in E(K)}\weights{\psi}{x}{K}=0.
\end{align*}
(In the second equality we use that each $K \in \mathcal K_r[A \cup \{x\}]$ lies in $\mathcal{A}$ by the definition of $\mathcal H_x$.)
Therefore \ref{bad1} holds.

Now let $x\in V(G)$ and $y\in N(x)$. If $A\in\HH_x$ and $y\in A$, then there are $r-1$ cliques $K\in \KK_r$ with $K \in \KK_r [A\cup \{x\}]$ and $xy\in E(K)$.  
Thus, 
\[
\sum_{K \in \KK_r[A\cup \{x\}]\colon xy\in E(K)}\weights{\psi}{x}{K}=1.
\]
Let $w_{x,y}$ be the number of sets $A\subset V(G)$ for which $A\in\HH_x$ and $y\in A$.
Then
\begin{equation}
\sum_{K\in \AA\colon xy\in E(K)}\weights{\phi}{x}{K}=\frac{1}{w_x}\sum_{A\in \HH_x}\sum_{K\in  \KK_r[A\cup \{x\}]\colon xy\in E(K)}\weights{\psi}{x}{K}=\frac{w_{x,y}}{w_x}.\label{XXX}
\end{equation}
In the last equality we use that each $K \in \mathcal K_r[A \cup \{x\}]$ (with $xy \in E(K)$) lies in $\mathcal{A}$ by the definition of $\mathcal H_x$.

\begin{claim}\label{RRR1} For each $x\in V(G)$ and $y\in N(x)$,
\[
|w_{x,y}-w_x+(n-d(y)-\d n)k_{r-2}|\leq |N^c(x)\cap N^c(y)|k_{r-2}+(24\d^2 (r+1)^2+2/r^2)k_{r-1}.
\]
\end{claim}

\begin{proof}[Proof of Claim~\ref{RRR1}]
By Proposition~\ref{KXsmall}, there are at most $k_{r-1}/(r-1)^2\leq 2k_{r-1}/r^2$ cliques $K \in \KK_{r-1}$ for which $|X\cap V(K)|\geq r^{1/2}$.
Note that if $xy \in E(G)$, $K'$ is an $(r+1)$-clique containing~$xy$, and $| ( V(K') \setminus \{x,y\} ) \cap X| \le r^{1/2}$ then $V(K') \setminus \{x\}\in \mathcal{H}_x$.
Thus 
\begin{align}\label{DD1}
\big|w_{x,y}-\kappa_{xy}^{(r+1)}\big|\leq 2k_{r-1}/r^2.
\end{align}
Then, by Lemma~\ref{cliqest}\ref{est2} and (\ref{DD1}), we have that
\begin{equation*}
\big| w_{x,y}-k_{r-1}+|N^c(x)\cup N^c(y)|k_{r-2} \big| \leq 24(\d (r+1))^2k_{r-1}+2k_{r-1}/r^2.
\end{equation*}
Thus,
\begin{align*}
|w_{x,y}&-w_x+(n-d(y)-\d n)k_{r-2}| 
=|w_{xy}-k_{r-1}+(2n-d(x)-d(y))k_{r-2}|
\\
&\leq \big|w_{x,y}-k_{r-1}+|N^c(x)\cup N^c(y)|k_{r-2}\big|+\big||N^c(x)\cup N^c(y)|-(2n-d(x)-d(y))\big|k_{r-2}
\\
&\leq (24\d^2 (r+1)^2+2/r^2)k_{r-1}+|N^c(x)\cap N^c(y)|k_{r-2}. \qedhere
\end{align*}
\end{proof}

By Claim~\ref{RRR1}, for each $x\in V(G)$ and $y\in N(x)$, using Proposition~\ref{cliqnos}, we have\COMMENT{Here, we use the fact that $\delta \ge 1/32r^{3/2}$}
\begin{align}
|w_{x,y}-w_x| &\leq \d n k_{r-2} +\d n k_{r-2} + (24 \d^2 (r+1)^2+2/r^2)k_{r-1} \nonumber\\
& \leq (4\d r + 24 \d^2 (r+1)^2+2/r^2) k_{r-1}\leq k_{r-1}/2r^{1/2}. \label{DD4}
\end{align}
For each $x\in V(G)$ and $y\in N(x)$, recall that
\begin{equation}\label{DD3}
\tau_{x,y}=1-\sum_{K\in \AA \colon  xy\in E(K)}\weights{\phi}{x}{K}\stackrel{(\ref{XXX})}{=}(w_{x}-w_{x,y})/w_x.
\end{equation}
Therefore (\ref{WWW}), (\ref{DD4}) and (\ref{DD3}) together imply that for each $x\in V(G)$ and $y\in N(x)$, $|\tau_{x,y}|\leq 1/r^{1/2}$, and thus \ref{bad2} holds.

By Claim~\ref{RRR1}, we have, for each $x\in V(G)$, that
\begin{align*}
\sum_{y\in N(x)}&|w_x-w_{x,y}| \\
& \leq \sum_{y\in V(G)} \big( |n-d(y)-\d n|+|N^c(x)\cap N^c(y)| \big) k_{r-2}+(24\d^2 (r+1)^2+2/r^2)nk_{r-1}\\
& \leq \Big(|X|\d n +rn+\sum_{z\in N^c(x)}|N^c(z)|\Big)k_{r-2}+(24\d^2 (r+1)^2+2/r^2)nk_{r-1}\\
& \leq (\d^2 (r-1) n^2+rn+\d^2n^2)k_{r-2}+(24\d^2 (r+1)^2+2/r^2)nk_{r-1}\\
& \stackrel{\mathclap{\text{P\ref{cliqnos}}}}{\leq} 
    \big( 2 \d^2 r^2+ 2r^2/n + 2r\d^2 + 24\d^2 (r+1)^2+2/r^2 \big)nk_{r-1} 
\leq 7 n k_{r-1} /8r,
\end{align*}
where the final inequality is due to the fact that $\delta \leq 1/600r^{3/2}$ and $n \ge 32r^3$.
Together with (\ref{WWW}) and (\ref{DD3}) this implies that, for each $x\in V(G)$, $\sum_{y\in N(x)}|\tau_{x,y}|=\sum_{y\in N(x)}|w_{xy}-w_x|/w_x\leq n/r$, which proves \ref{bad3}.

Finally, for each vertex $x\in V(G)$ and clique $K\in \KK_r$, setting $i: =|V(K)\cap \{x\}|$, there are at most $\binom{n}{i}$ sets $A\in \HH_x$ for which $K\in \KK_r [A\cup \{x\}]$, and thus $\a_{x,K}\leq n^{i}$. 
Moreover, $k_r \le n k_{r-1}/r$ and $|\psi_x(K)| \leq 4/3r^i$.
Together with~\eqref{WWW}, this implies that 
\[
|\weights{\phi}{x}{K}|\leq 8 n^{i}|\weights{\psi}{x}{K}| / 7k_{r-1}\leq 2 n^{i+1}/r^{i+1}k_r.\qedhere
\]
\end{proof}

Consider the function $\phi_x$ given by Lemma~\ref{addweightvertex}.
Note that for each $y \in N(x)$
\[
\sum_{K\in \KK_r \colon xy\in E(K)} \weights{\phi}{x}{K}= 1 - \tau_{x,y}.
\]
To modify $\phi_x$ into a vertex-gadget, we will add weight $\tau_{x,y}$ to each edge $xy$ using our edge-gadgets. 
This is achieved by the next lemma.


\begin{lemma}\label{addweightvertex2} 
Let $r\geq 4$, $0 < \d \leq 1/600r^{3/2}$ and $n\geq 32r^3$. 
Let $G$ be a graph on $n$ vertices with $\d(G)\geq (1-\d)n$.
Let $X:=\{x \in V(G):d_G(x)\geq (1-\d)n+r-1\}$ and suppose $|X|\leq \d (r-1) n$.  
Let $\AA:=\{K\in \KK_r:|V(K)\cap X|\leq r^{1/2}+2\}$. Then for each vertex $x \in V(G)$, there exists a function $\xi_x:\AA\to \mathbb{R}$ so that the following holds.

\begin{enumerate}[label = \rm{(\roman{enumi})}]
\item If $x\in V(G)$ and $e\in E(G)$, then\label{lab1}
\[
\sum_{K\in \AA\colon e\in E(K)}\weights{\xi}{x}{K}=\begin{cases} 1 & \text{if }x\in V(e),\\ 0 & \text{if }x\notin V(e). \end{cases}
\]

\item \label{lab2} For all $K\in \AA$ and $x\in V(G)$, if $i=|V(K)\cap \{x\}|$, then $|\weights{\xi}{x}{K}|\leq 80n^{i+1}/r^{i+1}k_r$.

\end{enumerate}
\end{lemma}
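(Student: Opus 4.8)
The plan is to correct the almost-vertex-gadget $\phi_x$ from Lemma~\ref{addweightvertex} by adding, for every $y\in N(x)$, weight $\tau_{x,y}$ over the edge $xy$ and nothing over any other edge, using the edge-gadgets of Lemma~\ref{addweightedge}; that is, set $\xi_x(K):=\phi_x(K)+\sum_{y\in N(x)}\tau_{x,y}\,\psi_{xy}(K)$ for $K\in\AA$, where $\psi_{xy}$ is the edge-gadget for $xy$ supported on $\AA$. To invoke Lemma~\ref{addweightedge} one must first check that $\AA=\{K\in\KK_r:|V(K)\cap X|\le r^{1/2}+2\}$ is well-distributed (Definition~\ref{edgedist}). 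Fix $e\in E(G)$: the number of $r$-sets $A$ disjoint from $V(e)$ with $G[A\cup V(e)]\in\KK_{r+2}$ is $\kappa_{V(e)}^{(r+2)}\ge(1-4(r+2)\delta)k_r\ge3k_r/4$ by Lemma~\ref{cliqest}\ref{est1} (applied with clique size $r+2$). For such an $A$, some $r$-subset $B\subseteq A\cup V(e)$ fails to lie in $\AA$ only if $|(A\cup V(e))\cap X|>r^{1/2}+2$, which, as $|V(e)\cap X|\le2$, forces $|A\cap X|\ge r^{1/2}$; by Proposition~\ref{KXsmall} (applicable since $\delta(G)\ge(1-1/600r^{3/2})n$ and $|X|\le\delta(r-1)n\le rn/600r^{3/2}$) there are at most $k_r/r^2$ such $A$. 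Hence at least $3k_r/4-k_r/r^2\ge k_r/2$ good choices of $A$ remain, so $\AA$ is well-distributed.

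\textbf{Property \ref{lab1}.} This is immediate: for $e\in E(G)$,
\[
\sum_{K\in\AA\colon e\in E(K)}\xi_x(K)=\sum_{K\in\AA\colon e\in E(K)}\phi_x(K)+\sum_{y\in N(x)}\tau_{x,y}\,\mathbf{1}_{\{e=xy\}},
\]
and if $x\notin V(e)$ both terms vanish (by \ref{bad1} and since no $xy$ equals $e$), while if $e=xy_0$ with $y_0\in N(x)$ the first term equals $1-\tau_{x,y_0}$ by the definition of $\tau_{x,y_0}$ and the second equals $\tau_{x,y_0}$, giving total $1$. Since every clique used by $\phi_x$ and by each $\psi_{xy}$ lies in $\AA$ by construction, $\xi_x$ is indeed a function on $\AA$.

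\textbf{Property \ref{lab2}.} Fix $K\in\AA$ and set $i=|V(K)\cap\{x\}|$. For $y\in N(x)$ write $j=|V(K)\cap\{x,y\}|$, so $j=i+1$ if $y\in V(K)$ and $j=i$ otherwise; Lemma~\ref{addweightedge} gives $|\psi_{xy}(K)|\le6n^{j}/r^{j}k_r$, and \ref{bad4} gives $|\phi_x(K)|\le2n^{i+1}/r^{i+1}k_r$. Splitting the correction according to whether $y\in V(K)$ and using \ref{bad3} for the terms with $y\notin V(K)$,
\[
|\xi_x(K)|\le\Big(2+6\sum_{y\in V(K)\setminus\{x\}}|\tau_{x,y}|+6\Big)\frac{n^{i+1}}{r^{i+1}k_r},
\]
so it suffices to bound $\sum_{y\in V(K)\setminus\{x\}}|\tau_{x,y}|$ by an absolute constant, and this is the step where membership $K\in\AA$ is essential. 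From the estimates behind Claim~\ref{RRR1} one has $|w_x-w_{x,y}|\le(d(y)-(1-\delta)n)k_{r-2}+|N^c(x)\cap N^c(y)|k_{r-2}+(24\delta^2(r+1)^2+2/r^2)k_{r-1}$ and $w_x\ge7k_{r-1}/8$. Summing over $y\in V(K)\setminus\{x\}$, the at most $r^{1/2}+2$ vertices of $V(K)$ lying in $X$ contribute at most $(r^{1/2}+2)\delta n$ to $\sum(d(y)-(1-\delta)n)$ and the rest each contribute less than $r$ (by the definition of $X$), while $\sum_{y\in V(K)\setminus\{x\}}|N^c(x)\cap N^c(y)|\le\sum_{v\in V(K)}|N^c(v)|=r(n+1)-\sum_{v\in V(K)}d(v)\le r(\delta n+1)$. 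Feeding in $\delta\le1/600r^{3/2}$, $n\ge32r^3$ and $k_{r-2}\le(2r/n)k_{r-1}$ (Proposition~\ref{cliqnos}) turns the sum into a bounded multiple of $k_{r-1}$; dividing by $w_x$ then bounds $\sum_{y\in V(K)\setminus\{x\}}|\tau_{x,y}|$ by an absolute constant, which yields \ref{lab2} and completes the proof.

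\textbf{Main obstacle.} I expect the last estimate to be the delicate point: even a clique $K\in\AA$ can meet many neighbours $y$ of $x$ for which $|\tau_{x,y}|$ is of order $\delta r$, so the bound on $\sum_{y\in V(K)\setminus\{x\}}|\tau_{x,y}|$ must exploit both that at most $r^{1/2}+2$ of the relevant vertices lie in $X$ and that the common non-neighbourhoods $\sum_{v\in V(K)}|N^c(v)|$ are globally small; careful bookkeeping with the constants (and, if the plain averaged edge-gadget turns out too lossy, replacing $\psi_{xy}$ by a correction built from $(r+2)$-cliques through $x$ that distributes the adjustment over many more $r$-cliques) is what makes the constant $80$ in \ref{lab2} work for all admissible $r$.
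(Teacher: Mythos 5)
Your overall strategy (correct $\phi_x$ from Lemma~\ref{addweightvertex} by applying edge-gadgets to add $\tau_{x,y}$ over each edge $xy$) is the same as the paper's, as is the verification that $\AA$ is well-distributed and the proof of~\ref{lab1}. But there is a genuine gap in the bound for~\ref{lab2}: your claim that $\sum_{y\in V(K)\setminus\{x\}}|\tau_{x,y}|$ is bounded by an absolute constant for every $K\in\AA$ is false, and the culprit is exactly the $|N^c(x)\cap N^c(y)|$ term that you flag. Your bound $\sum_{y\in V(K)\setminus\{x\}}|N^c(x)\cap N^c(y)|\le r\delta n$, after multiplying by $k_{r-2}/w_x\le \frac{8}{7}\cdot\frac{2r}{n}$, gives a contribution of order $\delta r^2$. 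With $\delta$ as large as $1/600r^{3/2}$ this is $\Theta(r^{1/2})$, which is unbounded as $r\to\infty$, so it cannot be absorbed into the constant $80$. Membership of $K$ in $\AA$ (i.e.\ few vertices of $K$ in $X$) controls the $(\delta n-|N^c(y)|)$ term well, but gives no handle at all on $\sum_{y\in V(K)}|N^c(x)\cap N^c(y)|$ — a clique $K$ entirely outside $X$ can still consist of vertices $y$ with $N^c(y)\supseteq N^c(x)$, and then each term is $\approx|N^c(x)|=\Theta(\delta n)$.

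The paper circumvents this not by bounding the sum for all $K\in\AA$, but by shrinking the support of the edge-gadgets further: it defines $\AA_x:=\{K\in\AA:\sum_{y\in V(K)\cap N(x)}|\tau_{x,y}|\le 12\}$, proves that $\AA_x$ is itself well-distributed (via the sets $\HH_{e,x}$ and an averaging argument using~\ref{bad2}, \ref{bad3}), applies Lemma~\ref{addweightedge} with $\AA_x$ in place of $\AA$, and extends the resulting gadgets by zero to $\AA\setminus\AA_x$. Then the troublesome sum is $\le 12$ by definition of $\AA_x$ whenever it matters, and on $\AA\setminus\AA_x$ one has $\xi_x=\phi_x$ so the bound from~\ref{bad4} suffices. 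Your closing remark about "replacing $\psi_{xy}$ by a correction built from $(r+2)$-cliques through $x$" gestures in the right direction (change which cliques carry the correction) but does not supply the key idea: restrict to the subfamily of $r$-cliques on which the $\tau$-mass around $x$ is provably small, and check that this subfamily is still well-distributed.
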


The efficiency of a vertex-gadget $\xi_x$ from Lemma~\ref{addweightvertex2} can be compared to the efficiency of an edge-gadget $\psi_{xy}$ from Lemma~\ref{addweightedge} as follows. 
If a clique $K$ is disjoint from $\{x,y\}$, then (ii) in Lemma~\ref{addweightedge} says that $|\psi_{xy}(K)|\leq 6/k_r$, while (ii) in Lemma~\ref{addweightvertex2} says that $|\xi_x(K)|\leq 80 n/rk_r$; so $\xi_x$ may change the weight of the clique by an extra factor of $n/r$. 
However, $\psi_{xy}$ changes the weight of only one edge by $1$, while $\xi_x$ changes the weight of $|N(x)|\geq (1-\d) n$ edges by $1$. 
As the edge-gadgets can move a $\Omega(1/r^2)$ proportion of the weight, this indicates that the vertex-gadgets can move a $\Omega(1/r)$ proportion of the weight.

\begin{proof} 
For each vertex $x \in V(G)$, let $\phi_x$ be the function from Lemma~\ref{addweightvertex} for which \ref{bad1}--\ref{bad4} hold with the set $\AA$. The function~$\phi_x$ is an approximation to the function $\xi_x$ we require. For each $x\in V(G)$ and $y\in N(x)$, we let
\begin{equation}\label{TAU}
\tau_{x,y}:=1-\sum_{K\in \AA\colon xy\in E(K)}\weights{\phi}{x}{K}.
\end{equation}
As discussed above, this records the adjustments we will need to make to $\phi_x$ in order to obtain $\xi_x$. We will make these adjustments using Lemma~\ref{addweightedge}.

For each $e\in E(G)$, let
\[
\HH_{e} := \{ A\subset V(G)\setminus V(e) : G[A\cup V(e)]\in K_{r+2}\text{ and }|A\cap X|\leq r^{1/2}\}.
\]
By Proposition~\ref{KXsmall}, there are at most $k_{r}/r^2$ sets $A\subset V(G)$ with $G[A]\in \KK_r$ and $|A\cap X|\geq r^{1/2}$.
For each edge $xy\in E(G)$, using Lemma~\ref{cliqest}\ref{est1}, we have
\begin{align}
|\HH_{xy}|&\geq\kappa_{xy}^{(r+2)}-k_r/r^2
\geq k_r-4\d (r+2) k_r-k_r/r^2\geq 3k_r/4. \label{HHxybound}
\end{align}
 For each $e\in E(G)$ and $x\in V(G)$, let 
\begin{equation}\label{Hex}
\HH_{e,x}:=\Big\{A\in \HH_{e}:\sum_{y\in (A\cup V(e))\cap N(x)}|\tau_{x,y}|\leq 12 \Big\}.
\end{equation}

\begin{claim}\label{Hxybound} For all $e\in E(G)$ and $x\in V(G)$, $|\HH_{e,x}|\geq k_r/2$.
\end{claim}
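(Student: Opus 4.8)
The plan is to bound the number of $A \in \HH_e$ that violate the constraint defining $\HH_{e,x}$ in~\eqref{Hex}, i.e.\ those $A$ with $\sum_{y \in (A \cup V(e)) \cap N(x)} |\tau_{x,y}| > 12$. First I would split this sum into the contribution from vertices of $V(e) \cap N(x)$ — which contributes at most $2 \cdot 1/r^{1/2} \le 1$ by~\ref{bad2}, so is negligible — and the contribution from vertices $y \in A \cap N(x)$. So it suffices to bound the number of $A \in \HH_e$ for which $\sum_{y \in A \cap N(x)} |\tau_{x,y}| > 11$, say.

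Next I would use an averaging (first moment) argument over $\HH_e$. For a fixed $y \in N(x)$, the number of sets $A \in \HH_e$ with $y \in A$ is at most $\kappa_{V(e) \cup \{y\}}^{(r+1)} \le k_{r-1} \le (2r/n) k_r$ by Proposition~\ref{cliqnos} (being careful with the case $y \in V(e)$, which is excluded since $A \subset V(G) \setminus V(e)$, and with whether $y$ is a neighbour of the vertices of $e$, which only decreases the count). Hence
\begin{align*}
\sum_{A \in \HH_e} \sum_{y \in A \cap N(x)} |\tau_{x,y}|
&= \sum_{y \in N(x)} |\tau_{x,y}| \cdot |\{A \in \HH_e : y \in A\}| \\
&\le \frac{2r}{n} k_r \sum_{y \in N(x)} |\tau_{x,y}| \le \frac{2r}{n} k_r \cdot \frac{n}{r} = 2 k_r,
\end{align*}
where the last inequality uses~\ref{bad3}. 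By Markov's inequality, the number of $A \in \HH_e$ with $\sum_{y \in A \cap N(x)} |\tau_{x,y}| > 11$ is at most $2k_r / 11 < k_r / 4$. Combining with~\eqref{HHxybound}, which gives $|\HH_e| \ge 3k_r/4$, yields $|\HH_{e,x}| \ge 3k_r/4 - k_r/4 = k_r/2$, as required.

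The main obstacle — really just a bookkeeping point rather than a genuine difficulty — is making sure the counting bound $|\{A \in \HH_e : y \in A\}| \le (2r/n)k_r$ is applied correctly: one must check that this count is over sets $A$ of size $r$ contained in $V(G) \setminus V(e)$ with $G[A \cup V(e)]$ a clique, so it is at most the number of $(r-2)$-cliques extending $V(e) \cup \{y\}$ within the common neighbourhood, hence at most $k_{r-1}$ by monotonicity, and then Proposition~\ref{cliqnos} with $i=1$ applies. One should also verify the constant $12$ in~\eqref{Hex} is large enough to absorb both the $\le 1$ from $V(e)$-vertices and leave room for the Markov bound with threshold $11$ to beat $k_r/4$; since $2/11 < 1/4$ this is comfortable. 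No use of $\delta$ smallness beyond what is already built into~\ref{bad2} and~\ref{bad3} is needed here.
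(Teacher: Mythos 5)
Your proof is correct and uses essentially the same approach as the paper: a double count of $\sum_{A\in\HH_e}\sum_{y\in(A\cup V(e))\cap N(x)}|\tau_{x,y}|$, bounding the number of $A$ containing a fixed $y\notin V(e)$ by $k_{r-1}$ and then applying \ref{bad2}, \ref{bad3}, Proposition~\ref{cliqnos} and Markov's inequality. Two harmless off-by-one slips in your bookkeeping remark: the relevant quantity is $\kappa_{V(e)\cup\{y\}}^{(r+2)}$ (not $(r+1)$), and the extensions are $(r-1)$-cliques rather than $(r-2)$-cliques; both still give the bound $\le k_{r-1}$, so the argument stands.
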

\begin{proof}[Proof of Claim~\ref{Hxybound}]
For each $i\in \{0,1\}$ and each $e\in E(G)$, each $y\in V(G)$ with $|\{y\}\cap V(e)|=i$ is in at most $k_{r+i-1}$ of the sets $A\cup V(e)$ with $A\in \HH_e$.
We therefore have for all $e\in E(G)$ and $x\in V(G)$, that
\begin{align*}
12|\HH_{e}\setminus\HH_{e,x}|& \stackrel{\mathclap{\eqref{Hex}}}{\leq} \sum_{A\in \HH_e\setminus \HH_{e,x}}\sum_{y\in (A\cup V(e))\cap N(x)}|\tau_{x,y}|
\leq \sum_{y\in N(x)\cap V(e)}|\tau_{x,y}|k_{r}+\sum_{y\in N(x)\setminus V(e)}|\tau_{x,y}|k_{r-1}\\
&\stackrel{\mathclap{\ref{bad2}, \ref{bad3}}}{\leq} 2k_{r}/r^{1/2}+nk_{r-1}/r 
\stackrel{\text{P\ref{cliqnos}}}{\leq} 3k_{r}.
\end{align*}
Therefore, $|\HH_e\setminus \HH_{e,x}|\leq k_r/4$. Thus, by (\ref{HHxybound}), $|\HH_{e,x}|\geq 3k_r /4-k_r/4\geq k_r/2$.
\end{proof}

For each $x\in V(G)$, let
\begin{equation}\label{Axdefn}
\AA_x:=\Big\{ K\in \AA:\sum_{y\in V(K)\cap N(x)}|\tau_{x,y}|\leq 12 \Big\}.
\end{equation}
For all $e \in E(G)$, $x\in V(G)$, $A\in \HH_{e,x}$ and cliques $K \in \KK_r[A\cup V(e)]$, we have by (\ref{Hex}) and the definition of $\AA$, $\AA_x$, $\HH_e$ and $\HH_{e,x}$, that $K\in \AA_x$.%
	\COMMENT{
	Note that $A \in \HH_{e}$ implies that $| ( A \cup V(e) ) \cap X | \le r^{1/2}+2$.
	So $|V(K) \cap X| \le r^{1/2}+2$ implying that $K \in \AA$.
	Note that by the definition of~$\HH_{e,x}$, $\sum_{y\in V(K)\cap N(x)} | \tau_{x,y}| \le \sum_{y\in (A \cup V(e)) \cap N(x)} | \tau_{x,y}|\leq 12$.
	So $K \in \AA_x$.
	}
Together with Claim~\ref{Hxybound}, this implies that $\AA_x$ is well-distributed.
Thus, for each $x\in V(G)$ and each $e \in E(G)$, by Lemma~\ref{addweightedge}, there exists a function $\psi^x_e:\AA_x\to \mathbb{R}$ so that the following hold.
\begin{enumerate}[label= (\alph{enumi})]
\item If $e,e'\in E(G)$, then $\sum_{K\in \AA_x\colon e'\in E(K)}\psi_e^{x}(K)=\mathbf{1}_{\{e'=e\}}$. \label{med1}
\item For all $K\in \AA_x$ and $e\in E(G)$, if $i=|V(K)\cap V(e)|$, then $|\psi_e^{x}(K)|\leq 6n^{i}/r^{i}k_r$.\label{med2}
\end{enumerate}
For all $x\in V(G)$ and $e \in E(G)$, extend $\psi_e^x$ by setting $\psi_e^x (K) := 0$ for each $K\in \AA\setminus \AA_x$.
For all $K\in \AA$ and $x\in V(G)$, let
\begin{equation}\label{xidefn}
\weights{\xi}{x}{K}:=\weights{\phi}{x}{K}+\sum_{z\in N(x)} \psi_{xz}^{x}(K) \tau_{x,z}.
\end{equation}

We will now show that the functions $\xi_x$ have the required properties.
Consider any $x \in V(G)$. 
Firstly, for each $y\in N(x)$, by (\ref{xidefn}), \ref{med1} and (\ref{TAU}),
\begin{align*}
\sum_{K\in \AA\colon xy\in E(K)}\weights{\xi}{x}{K}
&=\sum_{K\in \AA\colon xy\in E(K)}\weights{\phi}{x}{K}+\sum_{z\in N(x)}\mathbf{1}_{\{xz=xy\}}\tau_{x,z}
\\
&=\sum_{K\in \AA\colon xy\in E(K)}\weights{\phi}{x}{K}+\tau_{x,y}=1,
\end{align*}
and for each edge $e\in E(G)$ with $x\notin V(e)$, by \ref{bad1} and \ref{med1}, 
\[
\sum_{K\in \AA\colon e\in E(K)}\weights{\xi}{x}{K}=\sum_{K\in \AA\colon e\in E(K)}\weights{\phi}{x}{K}+\sum_{z\in N(x)}\mathbf{1}_{\{xz=e\}}\tau_{x,z}=0+0=0.
\]
Therefore,~\ref{lab1} is satisfied.

It remains to prove~\ref{lab2} for all $x\in V(G)$ and $K\in \AA$, which we do separately for $K\in\AA\setminus\AA_x$ and $K\in \AA_x$.

If $K\in \AA\setminus \AA_x$, then $\psi_e^x (K)=0$ for each $e\in E(G)$.
Therefore, by~(\ref{xidefn}), $\weights{\xi}{x}{K}=\weights{\phi}{x}{K}$. Together with \ref{bad4}, this in turn implies that, if $i=|V(K)\cap \{x\}|$, then $|\weights{\xi}{x}{K}|\leq 2n^{i+1}/r^{i+1}k_r$.

If $K\in \AA_x$, then let $i:=|V(K)\cap \{x\}|$. Note that if $z\in N(x)$ then $|\{x,z\}\cap V(K)|=i+|\{z\}\cap V(K)|$. 
Together with (\ref{xidefn}),~\ref{bad4} and~\ref{med2}, this implies that
\begin{align*}
|\weights{\xi}{x}{K}|
&\leq 2n^{i+1}/r^{i+1}k_r + \sum_{z\in N(x)\cap V(K)}|\tau_{x,z}|(6n^{i+1}/r^{i+1} k_r)+ \sum_{z\in N(x)\setminus V(K)}|\tau_{x,z}|(6n^i/r^ik_r)
\\
&\stackrel{\mathclap{(\ref{Axdefn}),~\ref{bad3}}}{\leq} 2n^{i+1}/r^{i+1}k_r + 12(6n^{i+1}/r^{i+1}k_r)+ 6n^{i+1}/r^{i+1}k_r
=80n^{i+1}/r^{i+1}k_r.\qedhere
\end{align*}
\end{proof}


\section{Number of cliques containing a specified edge} \label{sec:kappa}

Recall that, after some initial preprocessing of the graph $G$, we give each $r$-clique weight $1/\kappa$, where $\kappa:=k_{r-2}-2\d nk_{r-3}$. 
This is not far from a fractional $K_r$-decomposition, and we aim to transform it into a fractional $K_r$-decomposition by correcting the weight over each edge using edge- and vertex-gadgets. 
For Theorem~\ref{fracdecomp} we will have $\delta=\Theta(1/r^{3/2})$, and as before we may need to move a $\Omega(\d^2r^2)=\Omega(1/r)$ proportion of the weight around to correct the weight over each edge. 
Our best technique is to use vertex-gadgets which are indeed capable of moving a $\Omega(1/r)$ proportion of the weights over the edges, but only certain adjustments can be made using such gadgets.

The adjustment to be made to the weight over each edge $xy$ is $(\kappa^{(r)}_{xy}/\kappa)-1$. 
In this section, we will break this adjustment down into $\sigma^*(x)+\sigma^*(y)+\pi^*(xy)$ so that on average $\sigma^*(x)=O(1/r)$ and $\pi^*(xy)=O(1/r^2)$. 
Hence we will be able to adjust the weight over each edge $xy$ by $\sigma^*(x)+\sigma^*(y)$ using vertex-gadgets and by $\pi^*(xy)$ using edge-gadgets.

We find such functions in the following lemma (where $(\sigma+\gamma)/\kappa$ and $\pi/\kappa$ correspond to $\sigma^*$ and $\pi^*$), before showing that the error term depending on the edges is $r$-smooth in Lemma~\ref{breakdown2}, so that it can be corrected using the edge-gadgets (via Lemma~\ref{smoothweight}).

In this section, we additionally require the notation that, for sets $A,B\subset V(G)$, 
\[
\bar{e}(A,B):=|\{(x,y):x\in A, y\in B, xy\notin E(G)\}|.
\]

\begin{lemma}\label{breakdown}
Let $r\geq 5$ and $\d:=1/10^4r^{3/2}$. 
Suppose that~$G$ is a graph on~$n$ vertices with $\d(G)\geq (1-\d)n$. Let $X:=\{x \in V(G):d_G(x)\geq (1-\d)n+r-1\}$ and suppose that $|X|\leq \d (r-1) n$.
For each $x\in V(G)$, let $\gamma(x):=(\d n-|N^c(x)|)k_{r-3}$. 
Let $\kappa:=k_{r-2}-2\d n k_{r-3}$.
Let $\pi_1,\pi_2:E(G)\to\R$ be functions defined by
\begin{align}
\pi_1&(xy) := 
 \d n  \sum_{z_1\in N^c(x)} |N^c(z_1)| k_{r-5} + \d n \sum_{z_2\in N^c(y)} |N^c(z_2)| k_{r-5} \nonumber \\
&- \sum_{z_1\in N^c(x)} \sum_{z_2\in N^c(y)}  | N^c(z_1) \cup N^c(z_2) |  k_{r-5}  +(\d n-|N^c(x)|)(\d n-|N^c(y)|)k_{r-4}
  \label{AA9a}
\end{align}
and 
\begin{align}
\pi_2(xy):= \Big( e(N^c(x))(|N^c(y)|-\d n) + e(N^c(y))(|N^c(x)|-\d n) \Big)k_{r-5}.\label{AA17}
\end{align}
Then there exist functions $\sigma:V(G)\to \mathbb{R}$ and $\pi:E(G)\to \mathbb{R}$ so that the following hold.
\begin{enumerate}[label= \rm{(\roman{enumi})}]
\item For each $xy\in E(G)$, \label{break1}
\[
\kappa_{xy}^{(r)}=\kappa+\gamma(x)+\gamma(y)+\sigma(x)+\sigma(y)+\pi(xy).
\]
\item For each $x\in V(G)$, $|\sigma(x)|\leq k_{r-2}/10^4r$. \label{break2}

\item \label{break3} 
For each $xy\in E(G)$,
\begin{align}
|\pi(xy)|\leq |\pi_1(xy)|&+|\pi_2(xy)|+2|N^c(x)\cap N^c(y)|k_{r-3} \nonumber 
\\&+ 203(\d r)^4k_{r-2} + 3 \bar{e}(N^c(x),N^c(y))k_{r-4}.\nonumber
\end{align}
\end{enumerate}
\end{lemma}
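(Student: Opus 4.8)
The plan is to compute $\kappa_{xy}^{(r)}$ by a careful application of inclusion--exclusion (Lemma~\ref{cliqest}\ref{est3}), keeping the first few terms explicitly and bounding the rest, and then to \emph{reorganise} the explicit terms into the three prescribed groups: a universal part $\kappa$, a vertex-local part $\gamma(x)+\gamma(y)+\sigma(x)+\sigma(y)$, and an edge-local remainder $\pi(xy)$. First I would invoke Lemma~\ref{cliqest}\ref{est3}, which gives
\[
\kappa_{xy}^{(r)} = k_{r-2} - \sum_{z\in N^c(x)\cup N^c(y)}\kappa_z^{(r-2)} + \sum_{\{z_1,z_2\}}\kappa_{\{z_1,z_2\}}^{(r-2)} - \sum_{\{z_1,z_2,z_3\}}\kappa_{\{z_1,z_2,z_3\}}^{(r-2)} + O\big((\delta r)^4 k_{r-2}\big),
\]
the sums running over subsets of $N^c(x)\cup N^c(y)$ of the indicated sizes. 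Then I would expand each $\kappa_Z^{(r-2)}$ itself using Lemma~\ref{cliqest} to a sufficient order: for the singleton terms use \ref{est2} down to the $k_{r-5}$ level (picking up $k_{r-3}$, $k_{r-4}$, $k_{r-5}$ contributions), for the pair terms use \ref{est1} or \ref{est2} to the $k_{r-4}$ level, and for the triple terms only the crude bound $\kappa_Z^{(r-2)}\le k_{r-5}$ is needed. This is where the error term $203(\delta r)^4 k_{r-2}$ will accumulate, from the tail of \ref{est3} together with the truncation errors in each sub-expansion; bounding all of these uses Proposition~\ref{cliqnos} repeatedly, together with $|N^c(x)|,|N^c(y)|\le \delta n$ and $|X|\le \delta(r-1)n$.

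Next comes the bookkeeping that is the real content. I would split $N^c(x)\cup N^c(y)$ via inclusion--exclusion into contributions from $N^c(x)$, from $N^c(y)$, and a $-N^c(x)\cap N^c(y)$ correction; the last of these is controlled by the $2|N^c(x)\cap N^c(y)|k_{r-3}$ term in \ref{break3}, again using that $|N^c(z)|\le\delta n$ to absorb the lower-order pieces it generates. The singleton-sum over $N^c(x)$ alone, expanded via \ref{est2}, produces a term $-|N^c(x)|\,k_{r-3}$ (the leading part), a term $+\sum_{z\in N^c(x)}|N^c(z)|\,k_{r-4}$, and a $k_{r-5}$-level piece; I would pull the $k_{r-3}$-level pieces plus the $\delta n$-calibration built into $\kappa = k_{r-2}-2\delta n k_{r-3}$ together so that the combination $k_{r-2} - |N^c(x)|k_{r-3} - |N^c(y)|k_{r-3}$ rewrites as $\kappa + \gamma(x)+\gamma(y)$, exactly matching the definition $\gamma(x)=(\delta n - |N^c(x)|)k_{r-3}$. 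The $k_{r-4}$-level single-vertex pieces and the cross term $\sum_{z\in N^c(x)}\sum_{w\in N^c(y)}(\cdots)$ from the pair-sum are then split: the part that factors as (something in $x$) $+$ (something in $y$) goes into $\sigma(x)+\sigma(y)$, and the genuinely edge-dependent cross terms go into $\pi(xy)$, where I would recognise them as precisely $\pi_1(xy)$, $\pi_2(xy)$ (note $\pi_2$ collects the $e(N^c(x))$, $e(N^c(y))$ contributions that arise when expanding $\kappa_z^{(r-2)}$ for $z$ a non-neighbour, via the $k_{r-5}$ term), and the $3\bar e(N^c(x),N^c(y))k_{r-4}$ slack for the pair terms where $z_1z_2$ is itself a non-edge. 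Finally I would verify \ref{break2}: $\sigma(x)$ is a sum of a bounded number of expressions each of the form $|N^c(x)|\cdot(\text{quantity})\cdot k_{r-j}$ or $\sum_{z\in N^c(x)}|N^c(z)|\,k_{r-j}$ with $j\ge 4$, and since $|N^c(x)|\le\delta n$, $\sum_{z\in N^c(x)}|N^c(z)|\le (\delta n)^2$, and $k_{r-j}\le (2r/n)^{j-2}k_{r-2}$, each is $O(\delta^2 r^2 k_{r-2}) = O(k_{r-2}/r)$, and with $\delta = 1/10^4 r^{3/2}$ the constant works out to at most $k_{r-2}/10^4 r$.

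The main obstacle I expect is not any single estimate but the \emph{matching problem}: organising the dozen-or-so explicit terms produced by the nested inclusion--exclusion so that (a) the $k_{r-2}$- and $k_{r-3}$-level terms collapse exactly onto $\kappa + \gamma(x)+\gamma(y)$, (b) everything that can be written additively in $x$ and $y$ separately is peeled off into $\sigma$, and (c) what remains is \emph{exactly} (not merely up to error) equal to $\pi_1(xy)+\pi_2(xy)$ plus the declared slack terms. Getting the algebra to land on the precise forms \eqref{AA9a} and \eqref{AA17} — in particular tracking which $k_{r-5}$-level contributions come from expanding $\kappa_z^{(r-2)}$ versus from the pair-sum, so that the $\delta n\sum|N^c(z)|k_{r-5}$ pieces and the $-\sum\sum|N^c(z_1)\cup N^c(z_2)|k_{r-5}$ piece appear with the right signs — requires disciplined bookkeeping rather than cleverness, and is the step where a sign error or a missing term is most likely to hide. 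A useful sanity check throughout is the regular case $N^c(x)=\emptyset$ for all $x$ (i.e.\ $G$ complete), where $\gamma\equiv 0$, $\sigma\equiv 0$, $\pi\equiv 0$, and the identity must reduce to $\kappa_{xy}^{(r)}=k_{r-2}=\kappa$, which it does since then $\delta n$ can be taken to be the genuine $0$.
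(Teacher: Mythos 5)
Your overall strategy matches the paper's: start from Lemma~\ref{cliqest}\ref{est3}, reorganise the sum over $Z\subset N^c(x)\cup N^c(y)$ into a part with $Z\subset N^c(x)$ alone, a part with $Z\subset N^c(y)$ alone, and genuine cross-terms (paying a $2|N^c(x)\cap N^c(y)|k_{r-3}$ overhead for the overlap), expand each $\kappa_Z^{(r-2)}$ to the appropriate depth, and peel off $\kappa+\gamma(x)+\gamma(y)$, then $\sigma(x)+\sigma(y)$, then $\pi(xy)$. The $k_{r-3}$-level reorganisation, the bound $|\sigma(x)|=O(\delta^2r^2k_{r-2})$, and the derivation of $\pi_1$ from the pair cross-sum are all in accord with the paper's argument.

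The gap is in your treatment of the triple terms $|Z|=3$. You assert that only a crude bound $\kappa_Z^{(r-2)}\le k_{r-5}$ is needed there, and you separately attribute the $e(N^c(x)),e(N^c(y))$ contributions in $\pi_2$ to a deeper expansion of the \emph{singleton} terms $\kappa_z^{(r-2)}$. Neither is right. Expanding $\kappa_z^{(r-2)}$ for $z\in N^c(x)$ produces quantities built from $N^c(z)$ --- the non-neighbourhood of $z$, not of $x$ --- so you would obtain $e(N^c(z))$-type expressions, not $e(N^c(x))$, and these would not match the $\pi_2$ the lemma actually asks you to produce. The $e(N^c(x)),e(N^c(y))$ terms can only come from the $|Z|=3$ summands of \ref{est3}: for a triple $\{z_1\}\cup\{z_2,z_3\}$ with $z_1\in N^c(x)$ and $z_2,z_3\in N^c(y)$, $\kappa_{\{z_1,z_2,z_3\}}^{(r-2)}$ vanishes unless $z_2z_3\in E(G)$, so summing its leading term $k_{r-5}$ over such triples yields $e(N^c(y))|N^c(x)|k_{r-5}$ (plus the $\bar e(N^c(x),N^c(y))k_{r-4}$ error for triples where the inner pair spans a non-edge); a crude upper bound discards exactly this structure. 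It is also quantitatively insufficient for all $r$: the total triple contribution is of order $(\delta n)^3k_{r-5}=O\big((\delta r)^3 k_{r-2}\big)=O(r^{-3/2})k_{r-2}$ per edge, which is a factor $r^{1/2}$ too large to be $r$-smooth (cf.\ \ref{good3}) once $r$ is large enough to exhaust the slack in the constant $10^4$. The main part of the triple sum must therefore be peeled into $\sigma(x)+\sigma(y)$ via $\sigma_3(x):=-e(N^c(x))\delta n k_{r-5}$, and the residual recognised as exactly $-\pi_2(xy)$, with the leftovers absorbed into the $\bar e(N^c(x),N^c(y))k_{r-4}$ and $(\delta r)^4k_{r-2}$ slack of \ref{break3}.
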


\begin{proof} 
By Lemma~\ref{cliqest}\ref{est3}, we have for each $xy\in E(G)$ that
\begin{align*}
\Big|\kappa_{xy}^{(r)}-k_{r-2}-\sum_{i=1}^3(-1)^i \sum_{Z\subset N^c(x)\cup N^c(y)\,:\,|Z|=i} \kappa_Z^{(r-2)}\Big|\leq 11(\d r)^4k_{r-2}.
\end{align*}
Together with Proposition~\ref{cliqnos} this implies that, for each $xy\in E(G)$,%
\COMMENT{To see the first inequality below, note that if $|Z|=3$ and $Z\subset N^c(x)\cap N^c(y)$ then there are 8 ways to have $Z_1\cup Z_2$.
Actually, instead of $7\sum_{j=0}^2\sum_{z\in N^c(x)\cap N^c(y)}\sum_{Z'\subseteq (N^c(x)\cup N^c(y))\setminus \{z\}\,:\,|Z'|=j} \kappa_{\{z\}\cup Z'}^{(r-2)}$ we get $\sum_{j=0}^2\sum_{z\in N^c(x)\cap N^c(y)}\sum_{Z'\subseteq (N^c(x)\cup N^c(y))\setminus \{z\}\,:\,|Z'|=j} (2^{j+1}-1)\kappa_{\{z\}\cup Z'}^{(r-2)}$ which would yield $2|N^c(x)\cap N^c(y)|k_{r-3}$ instead of $8|N^c(x)\cap N^c(y)|k_{r-3}$ in the final estimate.} 
\begin{align}
\Big|\kappa_{xy}^{(r)}-k_{r-2}&-\sum_{i=1}^3 (-1)^i\sum_{j=0}^i\;\sum_{Z_1\subset N^c(x)\,:\,|Z_1|=j}\;\sum_{Z_2\subset N^c(y)\setminus Z_1\,:\,|Z_2|=i-j} \kappa_{Z_1\cup Z_2}^{(r-2)}\Big|\label{AAsum}
\\
&\leq  11(\d r)^4k_{r-2}+
\sum_{j=0}^2\, (2^{j+1}-1)\sum_{z\in N^c(x)\cap N^c(y)}\, \sum_{Z'\subseteq (N^c(x)\cup N^c(y))\setminus \{z\}\,:\,|Z'|=j} \kappa_{\{z\}\cup Z'}^{(r-2)}\nonumber\\
& \le 11(\d r)^4k_{r-2}+|N^c(x)\cap N^c(y)|\Big(k_{r-3}+3\cdot 2\delta n k_{r-4}+7\binom{2\delta n}{2} k_{r-5}\Big)\nonumber \\
& \le 11(\d r)^4k_{r-2}+|N^c(x)\cap N^c(y)|(1+12\delta r+56(\delta r)^2)k_{r-3}\nonumber \\
& \le 11(\d r)^4k_{r-2}+2|N^c(x)\cap N^c(y)|k_{r-3},\nonumber
\end{align}
where in the first inequality we are bounding the extra contribution to the sum from those $Z_1 \cup Z_2$ that meet $N^c(x) \cap N^c(y)$.
Thus for each $xy\in E(G)$, we have
\begin{equation}
| \kappa_{xy}^{(r)}-k_{r-2}-S_1(x)-S_1(y)-S_2-S_3 | \leq  11(\d r)^4k_{r-2}+2|N^c(x)\cap N^c(y)|k_{r-3},\label{AA2}
\end{equation}
where\COMMENT{A: In the definition of $S_2$, we write $\kappa_{\{z_1,z_2\}}$ instead of $\kappa_{z_1z_2}$ as we do not know whether $z_1z_2$ is an edge.}
\begin{align}\label{S2}
S_2 &= S_2(xy):=\sum_{z_1\in N^c(x)}\sum_{z_2\in N^c(y)\setminus\{z_1\}}\kappa_{\{z_1,z_2\}}^{(r-2)}\\
\label{S3}
S_3 &= S_3(xy) :=-\sum_{j=1}^2\;\sum_{Z_1\subset N^c(x):|Z_1|=j}\;\sum_{Z_2\subset N^c(y)\setminus Z_1:|Z_2|=3-j} \kappa_{Z_1\cup Z_2}^{(r-2)}\\
\intertext{and, for each $z\in V(G)$,}
\nonumber
S_1(z) &:=\sum_{i=1}^3 (-1)^i\sum_{Z\subset N^c(z):|Z|=i}\kappa_{Z}^{(r-2)}.
\end{align}
Here $S_1(x)$ and $S_1(y)$ count the contributions to the sum in \eqref{AAsum} from those $Z_1 \cup Z_2$ with one of $Z_1$ or $Z_2$ empty,
and $S_2, S_3$ count the contributions from those $Z_1 \cup Z_2$ with $Z_1, Z_2$ both non-empty and $|Z_1 \cup Z_2| = 2$ or $3$ respectively.
In order to estimate $\kappa_{xy}^{(r)}$ we will now estimate $S_1(x)$, $S_1(y)$, $S_2$, and $S_3$.

We will first estimate $S_1(x)$, for each $x\in V(G)$, for which we let
\begin{align}\label{AA3}
\sigma_1(x)& := S_1(x)-\gamma(x)+\d nk_{r-3} \\
\label{AA3a}
& = \Big( -\sum_{z\in N^c(x)}\kappa_{\{z\}}^{(r-2)}+|N^c(x)|k_{r-3} \Big) + \sum_{i=2}^3 (-1)^i\sum_{Z\subset N^c(x):|Z|=i}\kappa_{Z}^{(r-2)}. 
\end{align}

\begin{claim}\label{sig1}
For each $x\in V(G)$, $|\sigma_1(x)|\leq 8 (\d r)^2k_{r-2}$. 
\end{claim}
\begin{proof}[Proof of Claim~\ref{sig1}]
By Lemma~\ref{cliqest}\ref{est1}, for each $z\in V(G)$, $|\kappa_{\{z\}}^{(r-2)}-k_{r-3}|\leq 2\d r k_{r-3}$. Together with Proposition~\ref{cliqnos}, this implies that
\begin{equation}\label{AA4}
\Big|\sum_{z\in N^c(x)}\kappa_{\{z\}}^{(r-2)}-|N^c(x)|k_{r-3}\Big|\leq \d n \cdot 2\d rk_{r-3}\leq 4(\d r)^2k_{r-2}.
\end{equation}
Moreover, using Proposition~\ref{cliqnos},
\begin{equation}\label{AA5}
\Big|\sum_{i=2}^3 (-1)^i\sum_{Z\subset N^c(x):|Z|=i}\kappa_{Z}^{(r-2)}\Big|
\leq \binom{\d n}{2}k_{r-4}+\binom{\d n}{3}k_{r-5}\leq 4(\d r)^2k_{r-2}.
\end{equation}
The claim follows from (\ref{AA3a}), (\ref{AA4}) and (\ref{AA5}).
\end{proof}

For each $x\in V(G)$, let
\begin{equation}\label{AA10}
\sigma_2(x):=\d n (|N^c(x)|-\d n/2)k_{r-4}-\d n\sum_{z_1\in N^c(x)}|N^c(z_1)|k_{r-5}.
\end{equation}
Note that, by Proposition~\ref{cliqnos}, we have that
\begin{equation}\label{AA11b}
|\sigma_2(x)|\leq (\d n)^2k_{r-4}/2+(\d n)^3k_{r-5}\leq 4\d^2r^2k_{r-2}.
\end{equation}
We will now estimate $|S_2- \sigma_2(x) - \sigma_2(y)|$. If $z_1z_2\in E(G)$, then, by Lemma~\ref{cliqest}\ref{est2},
\begin{equation}\label{AA6b}
\big|\kappa_{\{z_1,z_2\}}^{(r-2)}-k_{r-4}+|N^c(z_1)\cup N^c(z_2)|k_{r-5}\big|\leq 24(\d r)^2k_{r-4}.
\end{equation}
If $z_1z_2\notin E(G)$, then $\kappa_{\{z_1,z_2\}}^{(r-2)}=0$.
Therefore, by (\ref{AA6b}) and Proposition~\ref{cliqnos}, for each $xy\in E(G)$ we have
\COMMENT{middle line of the next calculation 
\begin{align*}
\le  \sum_{z_1\in N^c(x)} \sum_{z_2\in N^c(y) \,:\, z_1z_2\in E(G) } \Bigg| \kappa_{\{z_1,z_2\}}^{(r-2)}
- k_{r-4} + |N^c(z_1)\cup N^c(z_2)|k_{r-5})\Bigg|
\le \sum_{z_1\in N^c(x)} \sum_{z_2\in N^c(y)} 24(\d r)^2k_{r-4}.
\end{align*}
} 
\begin{align*}
\Big|\sum_{z_1\in N^c(x)}\sum_{z_2\in N^c(y)\setminus\{z_1\}}\kappa_{\{z_1,z_2\}}^{(r-2)}
-\sum_{z_1\in N^c(x)}\;&\sum_{z_2\in N^c(y)\,:\, z_1z_2\in E(G)}(k_{r-4}-|N^c(z_1)\cup N^c(z_2)|k_{r-5})\Big|
\\
&\leq 24\d^4r^2n^2k_{r-4}\leq 96( \d r )^4k_{r-2},
\end{align*}
so that, using~\eqref{S2},
\begin{align}
\Big|S_2&
-|N^c(x)||N^c(y)|k_{r-4} +\sum_{z_1\in N^c(x)}\sum_{z_2\in N^c(y)}|N^c(z_1)\cup N^c(z_2)|k_{r-5}\Big|\nonumber
\\
&\leq 96 (\d r)^4k_{r-2}
+\bar{e}(N^c(x),N^c(y)) k_{r-4},\label{AA7}
\end{align}
where we have used the fact that $k_{r-4}\geq |N^c(z_1)\cup N^c(z_2)|k_{r-5}$ by Proposition~\ref{cliqnos}.\COMMENT{$ |N^c(z_1)\cup N^c(z_2)|k_{r-5}  \le 2 \delta n k_{r-5} \le 4 \delta r k_{r-4} \le k_{r-4}$}
Note that, by (\ref{AA9a}) and (\ref{AA10}), for each $xy\in E(G)$,
\begin{equation*}
\pi_1(xy)+\sigma_2(x)+\sigma_2(y)=|N^c(x)||N^c(y)|k_{r-4}-\sum_{z_1\in N^c(x)}\sum_{z_2\in N^c(y)}|N^c(z_1)\cup N^c(z_2)|k_{r-5}.
\end{equation*}
Together with \eqref{AA7}, this implies that for each $xy\in E(G)$ we have
\begin{align}
|S_2-\sigma_2(x)-\sigma_2(y)|
&\leq |\pi_1(xy)|+96(\d r)^4k_{r-2}+\bar{e}(N^c(x),N^c(y))k_{r-4}.\label{AA11}
\end{align}

Now, for each $x\in V(G)$, let
\begin{equation}
\sigma_3(x):= -e(N^c(x)) \delta n k_{r-5}.\label{AA16}
\end{equation}
Note that for each $x\in V(G)$, by Proposition~\ref{cliqnos},
\begin{equation}\label{AA19}
|\sigma_3(x)|\leq (\delta n)^3k_{r-5}/2\leq 4\d^3r^3k_{r-2}.
\end{equation}
We will now estimate $|S_3 - \sigma_3(x) - \sigma_3(y)|$. If $G[\{z_1,z_2,z_3\}]\in \KK_3$, then, by Lemma~\ref{cliqest}\ref{est1}, we have $|\kappa_{\{z_1,z_2,z_3\}}^{(r-2)}-k_{r-5}|\leq 6\d rk_{r-5}$. 
Therefore, 
\begin{align}
\Big|\sum_{z_1\in N^c(x)} & \sum_{\{z_2,z_3\}\subset N^c(y)\setminus\{z_1\}}\kappa^{(r-2)}_{\{z_1,z_2,z_3\}} -e(N^c(y))|N^c(x)|k_{r-5}\Big| \nonumber \\
& = \Big|\sum_{z_1 \in N^c(x)} \Big( \sum_{\{z_2,z_3\}\subset N^c(y)\setminus\{z_1\}}\kappa^{(r-2)}_{\{z_1,z_2,z_3\}}
-\sum_{\{z_2,z_3\}\subset N^c(y):z_2z_3\in E(G)}k_{r-5} \Big) \Big| \nonumber
\\
&\leq \bar{e}(N^c(x),N^c(y)) (\d n)\cdot k_{r-5}+(\d n)^3\cdot 6\d r k_{r-5} \nonumber \\
&\leq \bar{e}(N^c(x),N^c(y))k_{r-4} +48(\d r)^4k_{r-2},
\label{AA14}
\end{align}
where the last inequality is due to Proposition~\ref{cliqnos} and the fact that $\delta r \le 1/2$.
%
%
Similarly,
\begin{align}
\Big|\sum_{z_1\in N^c(y)}\sum_{\{z_2,z_3\}\subset N^c(x)\setminus\{z_1\}}\kappa^{(r-2)}_{\{z_1,z_2,z_3\}}
&-e(N^c(x))|N^c(y)|k_{r-5}\Big| \nonumber \\
& \le \bar{e}(N^c(x),N^c(y))k_{r-4} +48(\d r)^4k_{r-2}. \label{AA15}
\end{align}
Note that, by (\ref{AA17}) and (\ref{AA16}), for each $xy\in E(G)$,
\begin{equation*}
\pi_2(xy)-\sigma_3(x)-\sigma_3(y)=e(N^c(x))|N^c(y)|k_{r-5}+e(N^c(y))|N^c(x)|k_{r-5}.
\end{equation*}
Together with~\eqref{S3}, (\ref{AA14}) and (\ref{AA15}), this implies that\COMMENT{From below: $=\Bigg|-\Bigg(\sum_{j=1}^2\;\sum_{Z_1\subset N^c(x):|Z_1|=j}\;\sum_{Z_2\subset N^c(y)\setminus Z_1:|Z_2|=3-j} \kappa_{Z_1\cup Z_2}^{(r-2)}\Bigg)-\sigma_3(x)-\sigma_3(y)\Bigg|$}
\begin{align}
|S_3-\sigma_3(x)-&\sigma_3(y)|
\leq |\pi_2(xy)|+ 2 \bar{e}(N^c(x),N^c(y))k_{r-4}+96(\d r)^4k_{r-2}.\label{AA18}
\end{align}

For each $x\in V(G)$, let
\begin{equation}\label{AA20}
\sigma(x):=\sigma_1(x)+\sigma_2(x)+\sigma_3(x),
\end{equation}
and for each edge $xy\in E(G)$, let
\begin{equation}\label{AA21}
\pi(xy):=\kappa^{(r)}_{xy}-\kappa-\gamma(x) -\gamma (y)-\sigma(x)-\sigma(y).
\end{equation}
Then \ref{break1} holds.
Note that, for each $x\in V(G)$, by Claim~\ref{sig1}, (\ref{AA11b}), (\ref{AA19}) and (\ref{AA20})%
\COMMENT{Here we use the fact that $\delta \leq 1/400r^{3/2}$.}
\[
|\sigma(x)|\leq 8\d^2r^2k_{r-2}+4\d^2r^2k_{r-2}+4\d^3r^3k_{r-2} \leq 13 \delta^2 r^2 k_{r-2} \leq k_{r-2}/10^4r,
\]
and thus \ref{break2} holds.

Note that $\pi(xy)=\kappa_{xy}^{(r)}-k_{r-2}-S_1(x)-S_1(y)-\sum_{i=2}^3(\sigma_i(x)+\sigma_i(y))$ by (\ref{AA3}), \eqref{AA20} and \eqref{AA21}. Together with (\ref{AA2}), (\ref{AA11}) and (\ref{AA18}) this shows that for each $xy\in E(G)$ we have that\COMMENT{full calculation
\begin{align*}
	|\pi(xy)|
\end{align*}
\begin{align*}
& \le
	\Big| \kappa_{xy}^{(r)}-k_{r-2}-S_1(x)-S_1(y)-S_2-S_3 \Big| \\
	& \qquad + |S_2- \sigma_2(x) - \sigma_2(y)| + |S_3-\sigma_3(x)-\sigma_3(y)|\\
	& \le  \big( 11(\d r)^4k_{r-2}+2|N^c(x)\cap N^c(y)|k_{r-3} \big) \\
	& \qquad +\big( |\pi_1(xy)|+96\d^4r^4k_{r-2}+\bar{e}(N^c(x),N^c(y))k_{r-4} \big) 
	\\ & \qquad + \big( |\pi_2(xy)|+ 2\bar{e}(N^c(x),N^c(y))k_{r-4}+96(\d r)^4k_{r-2}\big)\\
	&\le |\pi_1(xy)|+|\pi_2(xy)|+2|N^c(x)\cap N^c(y)|k_{r-3} + 203(\d r)^4k_{r-2} + 3 \bar{e}(N^c(x),N^c(y))k_{r-4}
\end{align*}
}
\begin{align*}
	|\pi(xy)|\hspace{14.25cm}
\end{align*}
\vspace{-0.75cm}
\begin{align*}
\hspace{0.275cm}& \le
	\Big| \kappa_{xy}^{(r)} -k_{r-2}-S_1(x)-S_1(y)-S_2-S_3 \Big| 
+ \Big|S_2- \sigma_2(x) - \sigma_2(y) \Big| + \Big| S_3-\sigma_3(x)-\sigma_3(y) \Big| \\
	&\le |\pi_1(xy)|+|\pi_2(xy)|+2|N^c(x)\cap N^c(y)|k_{r-3} + 203(\d r)^4k_{r-2} + 3 \bar{e}(N^c(x),N^c(y))k_{r-4}
\end{align*}
and thus \ref{break3} holds.
\end{proof}


Given a function $\pi:E(G)\to\R$ with the properties in Lemma~\ref{breakdown}, we wish to use Lemma~\ref{smoothweight} to add the weight $\pi(e)/\kappa$ to each edge $e$. 
We must therefore check that $\pi/\kappa$ is $r$-smooth.


\begin{lemma}\label{breakdown2}
Let $r\geq 25$, $\d:=1/10^4r^{3/2}$ and $n\geq 10^{4}r^3$.\COMMENT{A: Actually, we can have $\d:=1/(600r^{3/2})$ but we would need $r \ge 5000$. 
The main constraint comes from Claim~\ref{BB1}.} 
Suppose that~$G$ is a graph on~$n$ vertices with $\d(G)\geq (1-\d)n$. 
Let $X:=\{x \in V(G):d_G(x)\geq (1-\d)n+r-1\}$ and suppose that $|X|\leq \d (r-1) n$. 
Let $\kappa:=k_{r-2}-2\delta nk_{r-3}$ and let $\pi_1,\pi_2:E(G)\to \R$ be the functions defined in the statement of Lemma~\ref{breakdown}.
Suppose that $\pi:E(G)\to \mathbb{R}$ satisfies
\begin{align}
|\pi(xy)|\leq |\pi_1(xy)|&+|\pi_2(xy)|+2|N^c(x)\cap N^c(y)|k_{r-3}\nonumber
\\
&+203(\d r)^4k_{r-2}+3\bar{e}(N^c(x),N^c(y))k_{r-4}.\label{AA22ag}
\end{align}
Then the function $\pi/\kappa$ is $r$-smooth. 
\end{lemma}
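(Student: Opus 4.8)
The plan is to verify conditions~\ref{good1}, \ref{good2} and~\ref{good3} of Definition~\ref{smoothdefn} for $\pi/\kappa$ by controlling, term by term, the five contributions to $|\pi(xy)|$ appearing in~\eqref{AA22ag}. Before anything else I would record the preliminary estimate $\kappa\geq(1-4\delta r)k_{r-2}\geq k_{r-2}/2$ (from Proposition~\ref{cliqnos} and $\delta=1/10^4r^{3/2}$), so that dividing by $\kappa$ only costs a factor $(1-4\delta r)^{-1}\leq 1+2\cdot10^{-4}$ and it suffices to bound each contribution against a small multiple of $k_{r-2}$, $nk_{r-2}$ and $n^2k_{r-2}$ respectively. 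The standing tools are: Proposition~\ref{cliqnos}, to replace $k_{r-i}$ by $(2r/n)^{i-2}k_{r-2}$; the minimum degree hypothesis, which gives $|N^c(v)|\leq\delta n$ for every vertex $v$ and hence $|N^c(x)\cap N^c(y)|\leq\delta n$, $\bar{e}(N^c(x),N^c(y))\leq(\delta n)^2$ and $e(N^c(v))\leq(\delta n)^2/2$; the global non-edge count $\sum_{v\in V(G)}|N^c(v)|=\sum_{v}(n-d(v))\leq\delta n^2$; and the structural input attached to $X$, namely that $\delta n-|N^c(x)|<r$ whenever $x\notin X$, while $|X|\leq\delta(r-1)n$.

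For~\ref{good1} I would simply do the pointwise estimates. Using the tools above, $|\pi_1(xy)|\leq 5(\delta r)^2k_{r-2}$, $|\pi_2(xy)|\leq(\delta n)^3k_{r-5}\leq 8(\delta r)^3k_{r-2}$, $3\bar{e}(N^c(x),N^c(y))k_{r-4}\leq 12(\delta r)^2k_{r-2}$, and $203(\delta r)^4k_{r-2}$ is smaller still; the only genuinely non-negligible term is $2|N^c(x)\cap N^c(y)|k_{r-3}\leq 4\delta r k_{r-2}$. Summing these and dividing by $\kappa$ yields $|\pi(xy)/\kappa|\leq(4\delta r+O((\delta r)^2))/(1-4\delta r)$, and this is at most $1/10^4$ precisely because $r\geq 25$ forces $4\delta r\leq 0.8\cdot10^{-4}$.

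For~\ref{good2} and~\ref{good3} I would sum the same five bounds over $y\in N(x)$, respectively over $xy\in E(G)$. After applying Proposition~\ref{cliqnos} the work reduces to bounding a short list of combinatorial sums: $\sum_{y}(\delta n-|N^c(y)|)$, which by splitting over $y\in X$ and $y\notin X$ is at most $\delta^2rn^2+rn$; the quantities $\sum_{v}|N^c(v)|^2$ and $\sum_{v}e(N^c(v))$, each at most $\delta n$ times $\sum_v|N^c(v)|\leq\delta n^2$; and $\sum_{xy\in E(G)}|N^c(x)\cap N^c(y)|=\sum_{z\in V(G)}e(N^c(z))\leq n(\delta n)^2/2$ (count the common non-neighbour $z$). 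Feeding these in, every term of $\sum_{y\in N(x)}|\pi(xy)|$ is of the form $O(\delta^2 r+\delta^3 r^3+\delta r^3/n)\,nk_{r-2}$ and every term of $\sum_{xy\in E(G)}|\pi(xy)|$ is of the form $O(\delta^2 r+\delta^3 r^3+r^4/n^2)\,n^2k_{r-2}$; since $\delta=1/10^4r^{3/2}$ and $n\geq 10^4r^3$, each of these is comfortably below $nk_{r-2}/10^4r$ and $n^2k_{r-2}/10^4r^2$, which gives~\ref{good2} and~\ref{good3}.

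No single estimate is hard; the real content is the bookkeeping, and there are two places where a crude bound is not good enough. The first is~\ref{good1}, where the constant is genuinely tight — the $4\delta r$ term alone contributes about $0.8\cdot10^{-4}$ — which is why the hypothesis $r\geq 25$ is needed. The second, and what I expect to be the main obstacle, is~\ref{good3}: one cannot afford to bound the number of non-edge pairs by $n^2$, so one must use the dichotomy $x\in X$ versus $x\notin X$ for the factors $\delta n-|N^c(x)|$ occurring in $\pi_1$ and $\pi_2$, together with the global bound $\sum_v|N^c(v)|\leq\delta n^2$. This is exactly where the hypotheses $|X|\leq\delta(r-1)n$ and $n\geq 10^4r^3$ are spent.
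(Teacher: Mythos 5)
Your overall route is the same as the paper's: verify \ref{good1}, \ref{good2}, \ref{good3} term-by-term for the five contributions in \eqref{AA22ag}, using Proposition~\ref{cliqnos} to normalise the $k_{r-i}$'s, the crude pointwise non-neighbourhood bounds, the global bound $\sum_v|N^c(v)|\leq\delta n^2$, and the $X$--dichotomy for $\delta n-|N^c(\cdot)|$. That matches the paper's sequence of Claims~\ref{BB1}--\ref{BB5}. Your treatment of \ref{good1} and \ref{good2} is sound (and the slack at $r=25$ is indeed where the constant $10^4$ bites). The accounting for the $\pi_1$, $\pi_2$, $|N^c(x)\cap N^c(y)|$ and constant terms under \ref{good3} is also fine with the tools you list.

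There is, however, one genuine gap: the term $3\bar{e}(N^c(x),N^c(y))k_{r-4}$ in \ref{good3}. The only tool you record for it is the pointwise bound $\bar{e}(N^c(x),N^c(y))\leq(\delta n)^2$, and that is not good enough. Summing it over $xy\in E(G)$ gives $\sum_{xy}3\bar{e}(\cdot)k_{r-4}\leq\tfrac32 n^2(\delta n)^2k_{r-4}\leq 6\delta^2 r^2 n^2 k_{r-2}$, and since $\delta^2 r^2 = 1/(10^8 r)$, this exceeds the budget $n^2k_{r-2}/10^5 r^2$ (roughly, $1/(10^5r^2)$) once $r$ is of order a few hundred — so the claim fails for most of the range $r\geq25$. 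The paper's Claim~\ref{BB5} instead counts the sum globally: $\sum_{x\in V(G)}\sum_{y\in N(x)}\bar{e}(N^c(x),N^c(y))\leq|\{(x,z_1,z_2,y): xz_1,z_1z_2,z_2y\notin E(G)\}|\leq n(\delta n)^3$, a ``non-edge path'' count which gains an extra factor of $\delta$ and gives $\leq 4\delta^3 r^2 n^2 k_{r-2}$, which is comfortably within budget. Equivalently, one can write $\sum_{x,y}\bar{e}(N^c(x),N^c(y))\leq\sum_{z_1z_2\notin E}|N^c(z_1)||N^c(z_2)|\leq(\delta n)^2\cdot\delta n^2$, which uses only the tools you already have — but you need to recognise that this global count, rather than the pointwise $\bar{e}\leq(\delta n)^2$, is what is required; as written, your list of ``short combinatorial sums'' omits this one, so the claimed $O(\delta^2 r+\delta^3r^3+r^4/n^2)$ class for \ref{good3} is not actually delivered by your listed bounds.
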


\begin{proof} We will show that $\pi/\kappa$ is $r$-smooth using a sequence of claims. Note first, using Proposition~\ref{cliqnos}, that $\kappa\geq k_{r-2}-4\d rk_{r-2}\geq 9k_{r-2}/10$.

\begin{claim}\label{BB1} For each $xy\in E(G)$, $|\pi(xy)|\leq \kappa/10^4$.
That is, $\pi/\kappa$ satisfies \ref{good1} in the definition of $\pi/\kappa$ being $r$-smooth.
\COMMENT{Here we need $4 \d r \le 4/(5\cdot 10^4)$.}
\end{claim}
\begin{proof}[Proof of Claim~\ref{BB1}] Note that $|N^c(z_1)\cup N^c(z_2)|=|N^c(z_1)|+|N^c(z_1)|-|N^c(z_1)\cap N^c(z_2)|$ for each $z_1,z_2\in V(G)$. Therefore, for each $xy\in E(G)$ we have by~\eqref{AA9a}
that  
\begin{align}
\pi_1(x&y)=(\d n-|N^c(y)|)\sum_{z_1\in N^c(x)}|N^c(z_1)|k_{r-5}+(\d n - |N^c(x)|)\sum_{z_2\in N^c(y)}|N^c(z_2)|k_{r-5}\nonumber
\\
&+\sum_{z_1\in N^c(x)}\sum_{z_2\in N^c(y)}|N^c(z_1)\cap N^c(z_2)|k_{r-5}+(\d n-|N^c(x)|)(\d n-|N^c(y)|)k_{r-4}. \label{AA9}
\end{align}
So $|\pi_1(xy)|\leq 3(\d n)^3k_{r-5}+(\d n)^2k_{r-4}$.
By (\ref{AA17}), we have $|\pi_2(xy)|\leq (\d n)^3k_{r-5}$. Therefore, by (\ref{AA22ag}), Proposition~\ref{cliqnos} and the fact that $r^{1/2}\geq 5$, we have
\begin{align}
|\pi(xy)|&\leq 4(\d n)^3k_{r-5}+203(\d r)^4k_{r-2}+4(\d n)^2k_{r-4}+ 2|N^c(x)\cap N^c(y)| k_{r-3}\nonumber
\\
& \leq ( 32(\d r)^3 + 203(\d r)^4 + 16(\d r)^2 )  k_{r-2} +2|N^c(x)\cap N^c(y)|k_{r-3} \nonumber \\
&\leq 20(\d r)^2k_{r-2}+2|N^c(x)\cap N^c(y)|k_{r-3}\label{BB0}
\\
\nonumber
& \leq k_{r-2}/10^5+4\d rk_{r-2}=(1/10^5+4/10^4r^{1/2})k_{r-2}
\\
& \leq 9k_{r-2}/10^5\leq \kappa/10^4.\qedhere
\end{align}
\end{proof}


\begin{claim}\label{BB1b} For each vertex $x\in V(G)$, $\sum_{y\in N(x)}|\pi(xy)|\leq \kappa n/10^4r$.\COMMENT{Here we need $4 (\d r)^2 \le 8/ 10^5r$.}
That is, $\pi/\kappa$ satisfies \ref{good2} in the definition of $\pi/\kappa$ being $r$-smooth.
\end{claim}
\begin{proof}[Proof of Claim~\ref{BB1b}] By (\ref{BB0}) and Proposition~\ref{cliqnos}, we have, for each $x\in V(G)$, that
\begin{align*}
\sum_{y\in N(x)}|\pi(xy)|&\leq 20(\d r)^2 n k_{r-2}+2\sum_{y\in N(x)}|N^c(x)\cap N^c(y)|k_{r-3} \\
&\leq nk_{r-2}/10^5r+2\sum_{z\in N^c(x)}|N^c(z)|k_{r-3} 
 \leq nk_{r-2}/10^5r+2\d^2n^2k_{r-3}\\
&\leq nk_{r-2}/10^5r+4\d^2 r nk_{r-2}
\leq 9nk_{r-2}/10^5r\leq \kappa n/10^4r.\qedhere
\end{align*}
\end{proof}


\begin{claim}\label{BB2} We have $\sum_{x\in V(G)}\sum_{y\in N(x)}|\pi_1(xy)|\leq n^2k_{r-2}/10^5r^2$.
\end{claim}
\begin{proof}[Proof of Claim~\ref{BB2}] Note that, as $10^4 \delta^2 n \geq 1$,
\begin{equation}\label{CC0}
\sum_{x\in V(G)}(\d n-|N^c(x)|)\leq \sum_{x\in X}\d n+\sum_{x\notin X}r\leq \d^2 rn^2+rn\leq 10^5\d^2r n^2.
\end{equation}
Note also that
\begin{equation}\label{CC2}
\sum_{y\in V(G)}\sum_{z_1\in N^c(y)}|N^c(z_1)|\leq \d n\sum_{z_1\in V(G)}|N^c(z_1)|\leq \d^2 n^3.
\end{equation}
Therefore, by (\ref{CC0}) and (\ref{CC2}),
\begin{align}
\sum_{x\in V(G)}\sum_{y\in N(x)}\Big((\d n-|N^c(x)|)\sum_{z_1\in N^c(y)}|N^c(z_1)|\Big)&\leq 10^5\d^4rn^5. \label{CC3}
\end{align}
Note also that%
\begin{align}
\sum_{x\in V(G)}
\sum_{y\in N(x)}
\sum_{z_1\in N^c(x)}
\sum_{z_2\in N^c(y)}
|N^c(z_1)\cap N^c(z_2)|
& \leq
(\delta n)^2
\sum_{z_1 \in V(G)}
\sum_{z_2 \in V(G)}
|N^c(z_1)\cap N^c(z_2)|
\nonumber \\ &
\leq 
(\delta n)^2
\sum_{z \in V(G)}
|N^c(z)|^2
\leq
\delta^4 n^5 \label{CC5}
\end{align}
Furthermore, by (\ref{CC0}),
\begin{align}
\sum_{x\in V(G)}\sum_{y\in N(x)}(\d n-|N^c(x)|)(\d n-|N^c(y)|)&\leq \Big(\sum_{x\in V(G)}(\d n-|N^c(x)|)\Big)^2 
\leq (10^5\d^2 r n^2)^2.\label{CC4}
\end{align}
Therefore, by (\ref{AA9}),  (\ref{CC3}), (\ref{CC5}), (\ref{CC4}) and Proposition~\ref{cliqnos}
\begin{align*}
\sum_{x\in V(G)}\sum_{y\in N(x)}|\pi_1(xy)|& \leq 2 \cdot 10^5\d^4rn^5 k_{r-5} + \d^4n^5 k_{r-5} + (10^5\delta^2 r n^2 )^2 k_{r-4} 
\\
& \leq \big( 16 \cdot 10^5(\d r)^4 + 8 \d^4r^3  + 4\cdot 10^{10}(\delta r )^4    \big) n^2k_{r-2} 
\\
&\le 10^{11} (\d r)^4 n^2k_{r-2}= n^2k_{r-2}/10^5r^2. \qedhere
\end{align*}
\end{proof}

\begin{claim}\label{BB3} We have $\sum_{x\in V(G)}\sum_{y\in N(x)}|\pi_2(xy)|\leq n^2k_{r-2}/10^5r^2$.
\end{claim}
\begin{proof}[Proof of Claim~\ref{BB3}] Note that, from (\ref{AA17}), for each $x\in V(G)$ and $y\in N(x)$,
\begin{align*}
|\pi_2(xy)|\leq (\d n -|N^c(y)|)\d^2n^2k_{r-5}+(\d n -|N^c(x)|)\d^2n^2k_{r-5}.
\end{align*}
Together with (\ref{CC0}) and Proposition~\ref{cliqnos}, this implies that
\begin{align*}
\sum_{x\in V(G)}  \sum_{y\in N(x)}|\pi_2(xy)| & \leq 2 n\sum_{x\in V(G)}(\d n -|N^c(x)|)\d^2n^2k_{r-5}
\\
&\leq 2n\cdot 10^5\d^2 rn^2\cdot \d^2n^2k_{r-5}
\leq 10^7 (\d r)^4n^2k_{r-2}\leq n^2k_{r-2}/10^5r^2. \qedhere
\end{align*}
\end{proof}

\begin{claim}\label{BB4} We have $\sum_{x\in V(G)}\sum_{y\in N(x)}|N^c(x)\cap N^c(y)|k_{r-3}\leq  n^2k_{r-2}/10^5r^2$. 
\end{claim}
\begin{proof}[Proof of Claim~\ref{BB4}]
 We have that, using Proposition~\ref{cliqnos},
\begin{align*}
\sum_{x\in V(G)}\sum_{y\in N(x)}|N^c(x)\cap N^c(y)|k_{r-3}&\leq \sum_{x\in V(G)}\sum_{z\in N^c(x)}|N^c(z)|k_{r-3}\leq \d^2n^3k_{r-3}
\\
&\leq 2\d^2 rn^2 k_{r-2}\leq n^2k_{r-2}/10^5r^2.\qedhere
\end{align*}

\end{proof}

\begin{claim}\label{BB5} We have $\sum_{x\in V(G)}\sum_{y\in N(x)}\bar{e}(N^c(x),N^c(y))k_{r-4}\leq n^2k_{r-2}/10^5r^2$. 
\end{claim}
\begin{proof}[Proof of Claim~\ref{BB5}]
Note that
\begin{align*}
 \sum_{x\in V(G)}\sum_{y\in N(x)}\bar{e}(N^c(x),N^c(y)) 
 \leq |\{(x, z_1, z_2, y) \in V(G)^4 : xz_1, z_1z_2, z_2y \notin E(G)\}| 
 \leq n(\delta n)^3,
\end{align*}
so by Proposition~\ref{cliqnos},
\[
\sum_{x\in V(G)}\sum_{y\in N(x)}\bar{e}(N^c(x),N^c(y))k_{r-4}\leq \d^3 n^4k_{r-4}\leq 4\d^3 r^2 n^2k_{r-2}\leq n^2k_{r-2}/10^5r^2.\qedhere
\]
\end{proof}

Now \eqref{AA22ag} and Claims~\ref{BB2}--\ref{BB5} together imply that
\begin{align*}
2\sum_{e\in E(G)}|\pi(e)| & =  \sum_{x\in V(G)}\sum_{y\in N(x)}|\pi(xy)|
 \leq 203(\d r)^4n^2k_{r-2} + 7n^2k_{r-2}/10^5r^2 \leq 2n^2\kappa/10^4r^2.
\end{align*}
 Thus $\pi/\kappa$ satisfies \ref{good3} in the definition of $\pi/\kappa$ being $r$-smooth. This completes the proof that $\pi/\kappa$ is $r$-smooth.
\end{proof}


\section{Proof of Theorem~\ref{fracdecomp}}\label{secproof}

We now combine our results and techniques to prove Theorem~\ref{fracdecomp}. 
After some initial preprocessing, we give each clique a uniform weighting before using Lemma~\ref{breakdown} to break down the adjustments that need to be made to the weight over each edge. 
We carry out the (potentially) larger adjustments using our vertex-gadgets from Lemma~\ref{addweightvertex2}, while the finer adjustments are shown to be $r$-smooth by Lemma~\ref{breakdown2} and can thus be made using Lemma~\ref{smoothweight}; making these corrections gives a fractional $K_r$-decomposition of the graph.

\begin{proof}[Proof of Theorem~\ref{fracdecomp}] 
First note that, for $r \leq 24$, $1/10^4r^{3/2} \leq 1/64r^3$ (with room to spare), so the result follows from Theorem~\ref{hypergraphs} with $k=2$.
So we may assume that $r \geq 25$.

Let $\d:=1/10^4r^{3/2}$ and $X:=\{x\in V(G):d(x)\geq (1-\d)n+r-1\}$. 
As in the proof of Theorem~\ref{r2lemma}, we may assume that $G[X]$ is $K_r$-free and that, similarly, $|X|\leq \d (r-1) n$.

Let $\kappa:=k_{r-2}-2\d nk_{r-3}$, and, for each vertex $x\in V(G)$, let 
\begin{equation*}
\gamma(x):=(\d n-|N^c(x)|)k_{r-3}.
\end{equation*}
By Lemmas~\ref{breakdown} and~\ref{breakdown2}, there are functions $\sigma:V(G)\to \mathbb{R}$ and $\pi:E(G)\to \mathbb{R}$, so that the following hold.
\begin{enumerate}[label= \rm{(\roman{enumi})}]
\item For each edge $xy\in E(G)$, $\kappa_{xy}^{(r)}= \kappa +\gamma(x)+\gamma(y)+\sigma(x)+\sigma(y)+\pi(xy)$.\label{ev1}

\item For each vertex $x\in V(G)$, $|\sigma(x)|\leq k_{r-2}/10^4r$.\label{ev2}

\item The function $\pi/\kappa$ is $r$-smooth.\label{ev3}
\end{enumerate}
By Lemma~\ref{smoothweight}, there exists a weighting $\omega': \KK_r \to \R$ so that the following hold.
\begin{enumerate}[label= \rm{(\roman{enumi})}]\addtocounter{enumi}{3}
\item For each $e\in E(G)$, $\sum_{K\in \KK_r:e\in E(K)} \omega'(K)=\pi(e)/\kappa$. \label{ev6} 
\item For each $K\in \KK_r$, $|\omega' (K) |\leq 1/2\kappa$.\label{ev7}
\end{enumerate}
Let $\AA:=\{K\in \KK_r:|V(K)\cap X|\leq r^{1/2}+2\}$.
By Lemma~\ref{addweightvertex2}, for each $x \in V(G)$, there is a function $\xi_x :\AA \to \R$, so that
\begin{enumerate}[label = \rm{(\roman{enumi})}]\addtocounter{enumi}{5}
\item If $x\in V(G)$ and $e\in E(G)$, then \label{ev4} $\sum_{K\in \AA\colon e\in E(K)}\weights{\xi}{x}{K}=\mathbf{1}_{\{x\in V(e)\}}$.

\item For each $K\in \AA$, and $x\in V(G)$, if $i=|V(K)\cap \{x\}|$, then \label{ev5} $
|\weights{\xi}{x}{K}|\leq 80n^{i+1}/r^{i+1}k_r$.
\end{enumerate}
Extend each $\xi_x$ by letting $\weights{\xi}{x}{K}:=0$ for each $K\in \KK_r\setminus \AA$.
Define a function $\omega: \KK_r \to \R$ by
\begin{equation}\label{FF2}
\omega (K) :=\frac{1}{\kappa}\Big(1- \kappa\cdot\omega'(K) -\sum_{x\in V(G)}(\gamma(x)+\sigma(x))\weights{\xi}{x}{K}\Big).
\end{equation}
We now check that $\omega$ gives a fractional $K_r$-decomposition of~$G$.

Firstly, for each edge $xy\in E(G)$, by~\eqref{FF2}, the definition of $\kappa_{xy}^{(r)}$,~\ref{ev6} and~\ref{ev4}, and then by \ref{ev1}, we have
\[
\sum_{K\in \KK_r:xy\in E(K)}\omega (K) =\frac{1}{\kappa }\Big(\kappa_{xy}^{(r)}-\pi(xy)-\sum_{v\in V(G)}(\gamma(v)+\sigma(v))\mathbf{1}_{\{v\in\{x,y\}\}}\Big)=1.
\]

Secondly note that, for each $x\in V(G)$, $|\gamma(x)|\leq \d n k_{r-3}$, and thus, by \ref{ev2} and Proposition~\ref{cliqnos},\COMMENT{use the fact that $\d = 1/10^4r^{3/2}$ and $ r \ge 5$ in the last inequality}
\begin{equation}\label{R1111}
|\gamma(x)+\sigma (x)|\leq \d n k_{r-3}+k_{r-2}/10^4r\leq (8 \d r^3 + 4  r /10^4 )k_{r}/n^2 \leq 9 r^{3/2} k_{r}  /10^4 n^2.
\end{equation}
Furthermore, if $x\in V(G)\setminus X$, then $|\gamma(x)|\leq rk_{r-3}$, and thus
 by~\ref{ev2}, Proposition~\ref{cliqnos} and the fact that $n \ge 10^4 r^3$,
\begin{equation}\label{R1112}
|\gamma(x)+\sigma (x)|\leq r k_{r-3}+k_{r-2}/10^4r \leq ( 8 r^4 / n + 4 r / 10^4)k_{r}/n^2 \leq  12 r k_{r}/10^4 n^2.
\end{equation}
Therefore, if $K\in \AA$, then, by the definition of $\AA$,~\eqref{R1111}, \eqref{R1112} and the fact that $r\ge 25$,%
   \COMMENT{In the final inequality we use that $(r^{1/2}+2)\cdot 9 k_{r} r^{3/2} /10^4n^2  + r \cdot 12 r k_{r}/10^4 n^2
\le \frac{7r^{1/2}}{5}\cdot 9 k_{r} r^{3/2} /10^4n^2+r \cdot 12 r k_{r}/10^4 n^2\le 25r^2k_r/10^4n^2$ since $r\ge 25$.}
\begin{align}
\sum_{x\in V(K)}|\gamma(x)+\sigma(x)| &\leq \sum_{x\in V(K)\cap X}|\gamma(x)+\sigma(x)|+\sum_{x\in V(K)\setminus X}|\gamma(x)+\sigma(x)|\nonumber
\\
&\leq (r^{1/2}+2)\cdot 9 k_{r} r^{3/2} /10^4n^2  + r \cdot 12 r k_{r}/10^4 n^2\leq  3 r^{2} k_{r}/10^3 n^2.\label{useful1}
\end{align}
Furthermore,~\eqref{R1111},~\eqref{R1112}, and the fact that $|X|\leq \d (r-1)n$ together imply that\COMMENT{use the fact that $\d = 1/10^4r^{3/2}$ in the last inequality}
\begin{align}
\sum_{x\in V(G)}|\gamma(x)+\sigma (x)|& \leq \d (r-1) n \cdot 9 r^{3/2} k_{r} /10^4 n^2 + n \cdot 12 r k_{r}/10^4 n ^2
\leq  2r k_{r}/10^3 n
.\label{useful2}
\end{align}
So for each clique $K\in \AA$, we have
\begin{align}
\Big|\sum_{x\in V(G)}(\gamma(x)+\sigma(x))  \weights{\xi}{x}{K}\Big|  & \stackrel{\mathclap{\ref{ev5}}}{\leq} \sum_{x\in V(K)}|\gamma(x)+\sigma(x)| \frac{80 n^2}{r^2k_r} 
 +\!\!\!\!\sum_{x\in V(G)\setminus V(K)}\!\!|\gamma(x)+\sigma(x)|\frac{80 n^2}{r^2k_r} 
 \nonumber
\\
&\stackrel{\mathclap{(\ref{useful1}),(\ref{useful2})}}{\leq} \quad \frac{3r^{2} k_{r}}{10^3 n^2} \cdot \frac{80n^2}{r^2k_r}+\frac{2r k_{r}}{10^3 n}\cdot\frac{80n}{rk_r}\leq 1/2. \label{FF3}
\end{align}
If $K\in \KK_r\setminus \AA$, then as $\weights{\xi}{x}{K}=0$ for each $x\in V(G)$, we have $|\sum_{x\in V(G)}(\gamma(x)+\sigma(x))\weights{\xi}{x}{K})|=0$. Therefore, by (\ref{FF2}), \ref{ev7}, and (\ref{FF3}), for each $K\in \KK_{r}$, $\omega(K)\geq (1-1/2-1/2)/\kappa\geq 0$, as required.
\end{proof}

\end{document}